\newtheorem{theorem}{Theorem}
\newtheorem*{theorem*}{Theorem}
\newtheorem*{observation*}{Observation}
\newtheorem{subtheorem}{Theorem}[theorem]
\newtheorem{lemma}[theorem]{Lemma}
\newtheorem{observation}[theorem]{Observation}
\newtheorem{proposition}[theorem]{Proposition}
\newtheorem{corollary}[theorem]{Corollary}
\newtheorem{definition}[theorem]{Definition}
\theoremstyle{remark}
\newtheorem{claim}[subtheorem]{Claim}
\def\cA{\mathcal{A}}
\def\cD{\mathcal{D}}
\def\cF{\mathcal{F}}
\def\cB{\mathcal{B}}
\def\cL{\mathcal{L}}
\def\eps{\varepsilon}
\def\epsilon{\varepsilon}
\def\phi{\varphi}
\def\veceps{\boldsymbol{\eps}}
\def\veczero{\boldsymbol{0}}
\def\ifpdf\input{#.pdf_t}\else\input{#.pdf_t}\fi1{\ifpdf\input{#1.pdf_t}\else\input{#1.pdf_t}\fi}
\title{Maximum number of sum-free colorings in finite abelian groups} 
\author{Hi\d{\^e}p H\`an} 
\address{Departamento de Matem\'atica y Ciencia de la Computaci\'on,
Universidad de Santiago de Chile}
\email{han.hiep@gmail.com}
\thanks{The first author was supported by the FONDECYT Iniciaci\'on grant  11150913.}
\author{Andrea Jim\'enez}
\address{CIMFAV, Facultad de Ingenier\'ia, Universidad de Valpara\'iso}
\email{andrea.jimenez@uv.cl}
\thanks{The second author was  supported by  CONICYT/FONDECYT/POSTDOC\-TORADO~3150673.}
\thanks{The authors acknowledge the support of Nucleo Milenio Informaci\'on y Coordinaci\'on en Redes ICM/FIC RC130003, Chile.}
\begin{document}

\maketitle
\begin{abstract}
An $r$-coloring of a subset $A$ of a finite abelian group $G$ is called 
sum-free if it does not induce a  monochromatic Schur triple, i.e., a triple of elements $a,b,c\in A$ with $a+b=c$.
We investigate $\kappa_{r,G}$, the maximum number of sum-free  $r$-colorings admitted by subsets of  $G$, and our results show a close relationship between 
$\kappa_{r,G}$ and largest sum-free sets of $G$.

Given a sufficiently large abelian group $G$ of type $I$, i.e., $|G|$ has a prime divisor $q$ with $q\equiv 2\pmod 3$.
For $r=2,3$ we show that  a subset $A\subset G$ achieves $\kappa_{r,G}$ if and only if $A$ is a largest sum-free set of $G$.
For even order $G$ the result extends to $r=4,5$, where the phenomenon  persists only if $G$ has a unique largest sum-free set.
On the contrary, if the  largest sum-free set in $G$ is not unique then
$A$ attains $\kappa_{r,G}$ if and only if it is the union of two largest sum-free sets (in case $r=4$)
and the union of three (``independent'') largest sum-free sets (in case $r=5$).

Our approach relies on the so called container method and can be extended to larger $r$ in case $G$ is of even order and contains sufficiently many largest sum-free sets.
\end{abstract}

\section{Introduction}

A \emph{Schur triple} in an abelian group $G$ is a triple $(a,b,c)$ with $a+b=c$, and  
a set $A\subset G$ is  \emph{sum-free} if $A$ contains no such triple. 
Given a not necessarily sum-free set $A\subset G$,  a coloring of the elements of $A$ with $r$ colors  is called a 
\emph{sum-free $r$-coloring} if each of the color classes  is a sum-free set. Sum-free colorings are among the classical objects studied in extremal combinatorics 
and can be traced back  to Schur's theorem~\cite{Schur}, one of the first results in Ramsey theory.

In this paper we investigate  the maximum number of sum-free colorings admitted by  subsets of a given finite abelian group. 
This is a variant of a problem posed by Erd\H{o}s and Rothchild~\cite{Erdosproblem1, Erdosproblem2} for graphs, see Section~\ref{sec:related}.
Let $\kappa_r(A)$   denote the number of all sum-free $r$-colorings of $A\subset G$ and let  the maximum over all $A\subset G$ be denoted by \[\kappa_{r,G}=\max\{\kappa_r(A)\colon A\subset G\}.\]
We are interested in the questions as how large $\kappa_{r,G}$ can be, given $r\geq 2$ and $G$, and which subsets of $G$ achieve the maximum.

A straightforward lower bound for $\kappa_{r,G}$ is obtained by 
considering a largest sum-free set  $B\subset G$, which gives $\kappa_{r,G}\geq r^{|B|}$.
The size of largest sum-free sets of $G$, denoted by $\mu(G)$, is a  classical and well-understood quantity which depends only on the factorization of $G$. 
The characterization of $\mu(G)$ distinguishes the following three types.
 \begin{definition}
Let $G$ be a finite abelian group of order $n$.
If $n$ has a prime divisor $q$ such that $q \equiv 2 \pmod 3$,
then we say that $G$ is a type~I group. In addition, $G$ is called type I($q$) if $q$ is the smallest such prime. 
If $G$ is not of type I and $3|n$ then we say that $G$ is of type II. Otherwise $G$ is called a type III group.
\end{definition}
For groups $G$ of type I and type II the quantity $\mu(G)$ was  determined by Diananda and Yap~\cite{DianandaYap}.
 The problem for groups of type III  appears to be far more complicated and was only resolved decades later by Green and Ruzsa~\cite{GreenRuzsa} (see \cite{RhemtullaStreet,Yap1,Yap2} for partial results
 for type III groups and~\cite{BPR} for the characterization of the largest sum-free sets therein). The results in~\cite{DianandaYap,GreenRuzsa} 
 determine $\mu(G)$ as follows: 
 \[\mu(G)=\begin{cases}\left(\frac13+\frac1{3q}\right)n&\text{if $G$ is of type I($q$),}\\
\frac n3 &\text{if $G$ is of type II,}\\
\left(\frac13-\frac1m\right)n&\text{if $G$ is of type III and $m$ is the largest order of an element in $G$.}
 \end{cases}\]
 
\medskip
For arbitrary abelian groups, we have the following upper bounds which are asymptotically sharp in the exponent for $r=2,3$.
\begin{proposition}\label{rem:contr}
Given $r\geq 2$ and a finite abelian group $G$ of order $n$, then
\[\log_2 (\kappa_{2,G})\leq {\mu(G)+ O(n(\log n)^{-\frac 1{45}})},\]
and for  $r\geq 3$
\[\log_3(\kappa_{r,G}) \leq \frac{r \mu(G)}3 + O(n(\log n)^{-\frac 1{45}}).\]
\end{proposition}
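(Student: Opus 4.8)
The plan is to use the hypergraph container method for Schur triples, which should yield both bounds simultaneously. First I would set up the $3$-uniform hypergraph $\mathcal{H}$ on vertex set $G$ whose edges are the Schur triples $\{a,b,c\}$ with $a+b=c$ (treating the degenerate cases $a=b$ or involving $0$ separately, as they contribute only $O(n)$ to any exponent). The key input is a supersaturation statement: any subset $A\subseteq G$ with $|A|\geq \mu(G)+\delta n$ contains at least $c\,\delta\, n^2$ Schur triples, for a constant $c=c(\delta)>0$. This follows from the removal lemma for abelian groups (Green, or Kr\'al'--Serra--Vena) together with the fact that $\mu(G)$ is the extremal number; alternatively one can cite a direct supersaturation result. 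Feeding $\mathcal{H}$ into the container theorem produces a family $\mathcal{C}$ of subsets of $G$ with: (i) every sum-free set is contained in some $C\in\mathcal{C}$; (ii) $|C|\leq \mu(G)+O(n(\log n)^{-1/45})$ for each $C\in\mathcal{C}$; and (iii) $\log_2|\mathcal{C}| = O(n(\log n)^{-1/45})$. The exponent $1/45$ is exactly what comes out of iterating the container lemma against the supersaturation bound that the group removal lemma provides, so I would quote that shape of bound rather than rederive it.

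Next I would convert the container family into a bound on colorings. A sum-free $r$-coloring of a set $A$ assigns to each element a color so that each color class is sum-free; equivalently, it is determined by choosing, for each of the $r$ color classes, a container $C_i\in\mathcal{C}$ that will hold that class, and then for each element of $A$ choosing which of the (at most $r$) admissible colors it receives — admissible meaning the element lies in $C_i$. Since $A\subseteq\bigcup_{i=1}^r C_i$ is forced (every element must get some color, hence lie in some $C_i$), the number of sum-free $r$-colorings of any fixed $A$ is at most $\sum_{(C_1,\dots,C_r)} r^{|C_1\cup\cdots\cup C_r|}$, and taking the maximum over $A$ gives
\[
\kappa_{r,G}\;\leq\; |\mathcal{C}|^{\,r}\cdot r^{\,\max_i |C_i| \cdot (\text{at most } r \text{ for } r=2,3;\ \text{refine for larger } r)}.
\]
For $r=2$ this is cleaner: we only need that a $2$-coloring picks $C_1,C_2$ and each element chooses between $\leq 2$ colors, but elements are $0/1$-labeled so the count of colorings of $A$ is at most $2^{|C_1\cup C_2|}\leq 2^{\,\mu(G)+O(n(\log n)^{-1/45})}$ once one observes $|C_1\cup C_2|\le |C_1|+|C_2|$ is too weak — instead one uses that a $2$-coloring is a function $A\to\{1,2\}$ with both classes sum-free, so it is subsumed by a single pair $(C_1,C_2)$ and the number of such functions is at most $2^{|A|}\le 2^{|C_1|+ \text{correction}}$; the correct and standard move is to bound $\log_2\kappa_{2,G}$ by $\max_{C\in\mathcal C}|C| + \log_2|\mathcal C| + \log_2(\text{number of choices of the pair})$, all of which are $\mu(G)+O(n(\log n)^{-1/45})$. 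For $r\geq 3$ the analogous computation gives $\log_3\kappa_{r,G}\leq \tfrac{r}{3}\max_i|C_i| + O(\log_3|\mathcal C|)$: here the factor $\tfrac{r}{3}$ arises because a sum-free $r$-coloring of $\bigcup C_i$, once the containers are fixed, is counted by $\prod$ over vertices of the number of admissible colors, and an averaging/entropy argument over which elements lie in few containers shows the exponent base is effectively $3$ on a $\tfrac{r\mu(G)}{3}$-sized exponent — more carefully, one uses that each sum-free set has size $\leq\mu(G)+o(n)$ so the $r$ color classes cover a set whose "weighted count" of colorings is maximized by the clean configuration.

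The main obstacle, and the step I would spend the most care on, is the passage from "$\kappa_r(A)$ is bounded by a sum over container tuples of $r^{|C_1\cup\cdots\cup C_r|}$" to the stated clean exponents $\mu(G)$ (for $r=2$) and $r\mu(G)/3$ (for $r\geq 3$). Naively $|C_1\cup\cdots\cup C_r|$ could be as large as $r\mu(G)(1+o(1))$, giving exponent $r\mu(G)$ in base $r$, which for $r=3$ is $3\mu(G)$ in base $3$ — far too big. The fix is the standard but delicate observation that it is wasteful to count a coloring by recording a full container for each class: instead one records a container only for a subset of the classes and uses the sum-free structure to control the rest, or one runs the counting directly on $A$ and notes that assigning colors so that \emph{every} class is sum-free is much more restrictive than assigning an arbitrary function — formally, one shows $\kappa_r(A)\leq |\mathcal C|^{O(1)} \cdot r^{(1+o(1))\mu(G)}$ for $r=2$ and $\kappa_r(A)\le |\mathcal C|^{O(1)}\cdot 3^{(1+o(1)) r\mu(G)/3}$ for $r\ge 3$ by a short convexity argument on the sizes of the color classes, each of which is sum-free and hence of size at most $\mu(G)+o(n)$, combined with $\sum_i |C_i\cap A|\ge |A|$. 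Getting the base of the logarithm to come out as $3$ rather than $r$ for $r\ge 3$ is the crux; it reflects the fact that the true extremal construction for $r\ge 3$ overlaps the color classes rather than keeping them disjoint, and the upper bound must be sharp enough to see that. Since the error term $O(n(\log n)^{-1/45})$ is dictated entirely by the container/removal-lemma input and is not improved by this combinatorial step, I would state it as inherited and not attempt to optimize it.
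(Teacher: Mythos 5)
Your overall plan is the paper's: apply the container theorem (Theorem~\ref{thm:container}) with the supersaturation estimate of Lemma~\ref{lem:supersaturation} to get a small family $\cF$ of almost-sum-free containers each of size at most $\mu(G)+2^{20}n(\log n)^{-1/45}$, then count colorings by assigning to each $\phi$ an $r$-tuple of containers. But the crux of the proof --- turning that assignment into the claimed exponents --- is left in a half-resolved state, and the fix you lean on hardest is not the right one.

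The precise step you need is what the paper records in Observation~\ref{obs:PhiInContainer}. Once the tuple $(F_1,\dots,F_r)\in\cF^r$ is fixed, an element $v$ that lies in exactly $k$ of the $F_i$'s admits at most $k$ colors, so the number of colorings assigned to the tuple is at most $\prod_k k^{n_k}$ where $n_k$ is the number of elements in exactly $k$ of the $F_i$'s; moreover $\sum_k k\,n_k=\sum_i|F_i|$. From this, the single elementary inequality $\log_3 k\le k/3$ (valid for all positive integers $k$, with equality at $k=3$) gives
\[\log_3\prod_k k^{n_k}=\sum_k n_k\log_3 k\le\tfrac13\sum_k k\,n_k=\tfrac13\sum_i|F_i|\le\tfrac r3\bigl(\mu(G)+2^{20}n(\log n)^{-1/45}\bigr),\]
and adding $r\log_3|\cF|=O(n(\log n)^{-1/18})$ finishes $r\ge3$. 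For $r=2$ one simply has $\prod_k k^{n_k}=2^{n_2}$ with $n_2=|F_1\cap F_2|\le\min(|F_1|,|F_2|)\le\mu(G)+2^{20}n(\log n)^{-1/45}$. You do mention ``$\prod$ over vertices of the number of admissible colors,'' which is this bound, but you then pivot to ``a short convexity argument on the sizes of the color classes.'' That pivot does not work: the color-class sizes $a_i\le\mu(G)$ only tell you $\sum a_i=|A|$ and bound the multinomial $\binom{|A|}{a_1,\dots,a_r}$, which for $|A|$ close to $n$ is far too large and does not use the container structure at all. The quantity to optimize is the distribution $(n_k)_k$ of container multiplicities, not the color-class sizes, and the inequality $\log_3 k\le k/3$ is what pins the base to $3$ and the exponent to $\tfrac{r}{3}\sum|F_i|$. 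You should state and use this explicitly; without it, the passage from your (correctly flagged as too weak) $r^{|C_1\cup\dots\cup C_r|}$ bound to the stated exponents is not actually established.
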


Although $\mu(G)$ is known for all finite abelian groups, it is safe to claim that those of type~I are much better understood (see Section~\ref{sec:groups}).
This additional knowledge allows us  to completely resolve the problem  for two and three colors in groups of type I of sufficiently large order. 
In these cases the straightforward lower bound from above is indeed sharp and only the largest sum-free sets achieve the maximum.
\begin{theorem}
\label{thm:main23}
Let $r \in \{2,3\}$, $q \in \mathbb{N}$ and 
let $G$ be a type~I($q$) group of sufficiently large order.
Then $\kappa_{r,G}=r^{\mu(G)}$ and $\kappa_r(A)=\kappa_{r,G}$ if and only if $A$ is a largest sum-free set in~$G$.
\end{theorem}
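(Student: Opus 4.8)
The plan is to establish the lower bound trivially and to bound $\kappa_r(A)$ from above in two stages: a container-based stability reduction to sets $A$ that are almost a largest sum-free set, followed by a direct count for such $A$. The bound $\kappa_{r,G}\ge r^{\mu(G)}$ is immediate, since each of the $r^{\mu(G)}$ colorings of a largest sum-free set is sum-free. For the converse and the characterization, first dispose of small $A$: if $|A|<\mu(G)$ then $\kappa_r(A)\le r^{|A|}<r^{\mu(G)}$; if $|A|=\mu(G)$ but $A$ is not sum-free then $A$ contains a Schur triple, so at least $r^{|A|-2}\ge1$ of its $r^{|A|}$ colorings are monochromatic on it and again $\kappa_r(A)<r^{\mu(G)}$; and $\kappa_r(A)=r^{\mu(G)}$ precisely when $A$ is a largest sum-free set. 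So it remains to prove $\kappa_r(A)<r^{\mu(G)}$ whenever $|A|>\mu(G)$.

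For the stability step I would invoke the container method behind Proposition~\ref{rem:contr}: the Schur triples of $G$ admit a family $\mathcal C$ of $2^{o(n)}$ containers, each of size $\mu(G)+o(n)$ and, by a removal/stability result of Green--Ruzsa type, each either of size at most $(1-c_0)\mu(G)$ or within $o(n)$ elements of a largest sum-free set. Assigning the $i$-th color class of a sum-free $r$-coloring of $A$ to a container $C_i$ gives $\kappa_r(A)\le\sum_{(C_1,\dots,C_r)\in\mathcal C^r}\prod_{a\in A}|\{i\colon a\in C_i\}|$, so $\kappa_r(A)\ge r^{\mu(G)}$ yields a tuple with $\prod_{a\in A}|\{i\colon a\in C_i\}|\ge r^{\mu(G)-o(n)}$. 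Writing $n_j$ for the number of $a\in A$ lying in exactly $j$ of the $C_i$, and using $\sum_j j\,n_j=\sum_i|A\cap C_i|\le\sum_i|C_i|\le r\mu(G)+o(n)$ with $\sum_j n_j=|A|$, a convexity/optimization argument --- this is where $\log2>\tfrac12\log3$ is used, and where $r\le3$ separates from $r\ge4$ --- forces $|A|=\mu(G)+o(n)$ and $|A\cap C_1\cap\dots\cap C_r|=n_r\ge\mu(G)-o(n)$. Since the $C_i$ are near-extremal while two distinct largest sum-free sets of a type~I($q$) group differ in $\Omega(n)$ elements, all the $C_i$ then lie within $o(n)$ of one largest sum-free set $B$, so $|A\triangle B|=o(n)$. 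For $r=4$ this conclusion fails --- the optimum is attained by two distinct largest sum-free sets, each used twice, already giving $4^{\mu(G)}$ --- which is exactly why the theorem is confined to $r\in\{2,3\}$.

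Now write $A=(B\cup P)\setminus M$ with $\emptyset\neq P=A\setminus B$, $M=B\setminus A$ and $|P|,|M|=o(n)$. The crux is a robustness property of largest sum-free sets in type~I groups: \emph{for every $x\in G\setminus B$, the graph $H_x$ on $B$ with $b\sim b'$ whenever $\{x,b,b'\}$ is a Schur triple has matching number at least $cn$ for some constant $c=c(q)>0$.} Granting it, fix $x_0\in P$ and let $H'$ be $H_{x_0}$ restricted to $A\cap B$, so $\nu(H')\ge cn-|M|\ge cn/2$ for $n$ large. In any sum-free $r$-coloring of $A$, the color of $x_0$ must be one of the colors whose class inside $A\cap B$ is $H'$-independent, hence
\[
\kappa_r(A)\ \le\ r^{|P|-1}\sum_{\chi}\bigl|\{\,c\in[r]\colon\text{class }c\text{ of }\chi\text{ is }H'\text{-independent}\,\}\bigr| ,
\]
the sum over all $r$-colorings $\chi$ of $A\cap B$. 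Bounding the independence polynomial of $H'$ by $(1+z)^{|A\cap B|-2\nu(H')}(1+2z)^{\nu(H')}$ at $z=(r-1)^{-1}$ shows each color contributes at most $r^{|A\cap B|}(1-r^{-2})^{\nu(H')}$, whence
\[
\kappa_r(A)\ \le\ r^{|A|}(1-r^{-2})^{\nu(H')}\ \le\ r^{\mu(G)}\,r^{|P|-|M|}(1-r^{-2})^{cn/2}\ <\ r^{\mu(G)}
\]
for $n$ large, since $|P|=o(n)$.

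Proving the robustness property is the step I expect to be the main obstacle, and it is where the special structure of type~I groups is essential. By the Diananda--Yap description of largest sum-free sets in type~I($q$) groups~\cite{DianandaYap}, $B=\phi^{-1}(S)$ for a surjection $\phi\colon G\to\mathbb Z_q$ (with $q$ the least prime divisor of $|G|$ with $q\equiv2\pmod3$) and a largest sum-free set $S\subseteq\mathbb Z_q$ --- a dilate of the ``middle third'' interval --- which satisfies the identity $S-S=\mathbb Z_q\setminus S$. Given $x\notin B$ and $t=\phi(x)\notin S$, this yields $(S-t)\cap S\neq\emptyset$, so $B\cap(B-x)=\phi^{-1}\bigl((S-t)\cap S\bigr)$ has at least $|G|/q$ elements, each contributing an edge $\{b,b+x\}$ of $H_x$; these edges are arcs of the orbits of translation by $x$, no whole orbit lies in $B$ (as $\phi$ is non-constant along it), so they form a disjoint union of paths, hence a matching of size $\ge|G|/(2q)$, giving $c=1/(2q)$. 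Carrying this through uniformly over all $x$ and all largest sum-free sets $B$, together with making the container/stability step of the second paragraph fully rigorous, is the bulk of the work; the same scheme breaks down for type~II and type~III groups precisely because their extremal sets are far less rigidly structured.
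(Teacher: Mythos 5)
Your proof takes essentially the same two-stage approach as the paper's: a container-based stability reduction (the paper's Theorem~\ref{thm:stability23}) forcing any near-optimal $A$ to lie within $o(n)$ of a single largest sum-free set $B$, followed by a local argument showing a single element $x\in A\setminus B$ imposes an exponential penalty. The paper counts pairs $\{a,b\}\subset B$ with $a+b=x$ (Lemma~\ref{lem:proof23exact} for odd order, Lemma~\ref{lem:matching} for even order) and bounds colorings pair by pair; you instead count pairs $\{b,b+x\}$ and bound via the independence polynomial of a matching subgraph, which gives exactly the same $(1-r^{-2})^{\Omega(n)}$ penalty per edge. These are equivalent in substance.

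One point in your sketch of the robustness property is not quite right: you assert that ``no whole orbit lies in $B$ (as $\phi$ is non-constant along it).'' This fails when $\phi(x)=0$, i.e.\ $x\in\ker\phi$, in which case $\phi$ \emph{is} constant on each orbit of translation by $x$, and an orbit meeting a fibre $\phi^{-1}(s)$ with $s\in S$ lies entirely in $B$ and forms a cycle rather than a path. The conclusion survives---a cycle on $k$ edges still has matching number $\lfloor k/2\rfloor\ge k/3$, so the matching number remains $\Omega(n)$ with a slightly worse constant---but the assertion as written is false and the fix should be stated. The paper sidesteps this entirely by using pairs $\{a,b\}$ with $a+b=x$, which form a matching automatically. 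Your stability paragraph also leans on a dichotomy (``each container is either of size $\le(1-c_0)\mu(G)$ or within $o(n)$ of a largest sum-free set'') that is correct but needs Lemma~\ref{lem:removal} and Lemma~\ref{prop:re1} to justify, whereas the paper applies these only to the $r$-fold intersection $F_1\cap\dots\cap F_r$; both routes work, yours with $c_0=1/(q+1)^2$. Finally, the passing remark that for $r=4$ the optimum is ``two largest sum-free sets each used twice, giving $4^{\mu(G)}$'' is not the actual optimum (Proposition~\ref{prop:lowerbounds} gives $(3\sqrt2)^{n/2}>4^{n/2}$ from the triple $(B_1,B_2,B_1\triangle B_2)$), though this doesn't affect the $r\le3$ argument.
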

For more than three colors this phenomenon does not persist in general and the problem becomes considerably more complicated. We therefore restrict our consideration to 
type I(2) groups, i.e., those of even order.
For these groups our second result resolves the problem for $r=4,5$.
Further,  we shall see in the course of the paper that our method can be extended to more than five colors. 
We refer to Section~\ref{sec:concludingremarks} for further discussions.

Before stating the result we  note that two largest sum-free sets $B_1, B_2$ in an  abelian group of even order give rise to  another   
through $B_3=B_1\triangle B_2$ (see Corollary~\ref{remark:reduction}). In particular, $B_3\subset B_1\cup B_2$ holds in this case and there
is no even order group with exactly two largest sum-free sets. To distinguish the two possible cases we call a tuple $(B_1,\dots, B_t)$ 
of largest sum-free sets  \emph{independent}, if none of the 
$B_i$'s is contained in the union of the remaining ones, or \emph{dependent},  if the opposite holds. 

\begin{theorem}\label{thm:main45} 
Let $G$ be a sufficiently large group of  even order. 
 If $G$ contains a unique largest sum-free set then this set and only this set maximizes the number of sum-free $r$-colorings for 
$r=4,5$. Otherwise
\begin{itemize}
\item $A\subset G$ maximizes the number of sum-free $4$-colorings if and only if
$A$ is the union of two  largest sum-free sets.
\item $A\subset G$ maximizes the number of sum-free $5$-colorings if and only if
$A$ is the union of three largest sum-free sets $B_1,B_2, B_3$. 
Moreover, if $(B_1,B_2,B_3)$ is independent, then $\kappa_5(A)=(1+o(1))181440\cdot6^{n/2}$ and if  $(B_1,B_2,B_3)$ is dependent, then $\kappa_5(A)=(1+o(1))90\cdot6^{n/2}$.
\end{itemize}
\end{theorem}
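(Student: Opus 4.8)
The plan is to run the container method in the spirit of Proposition~\ref{rem:contr}, but now tracking the second-order term rather than just the exponent. The starting point is a hypergraph container/removal-type statement for Schur triples: every subset $A\subset G$ that admits many sum-free $r$-colorings must have almost all of its elements covered by a bounded number of "containers", each of which is itself nearly sum-free, and the structure of nearly sum-free sets in type I(2) groups is rigid — by the stability theory behind Diananda--Yap and the Green--Ruzsa machinery, any set of size $(1/2-o(1))n$ with $o(n^2)$ Schur triples is, up to $o(n)$ elements, contained in one of the (explicitly understood) largest sum-free sets $B$, each of which is a coset of the index-$2$ subgroup. So a sum-free $r$-coloring of $A$ is, up to a vanishing fraction of the elements, a partition of $A$ into $r$ pieces each of which lives inside one of finitely many largest sum-free sets. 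First I would make this reduction precise: show $\kappa_r(A)\le 2^{o(n)}\cdot\max_{f}\prod_{B}(\#\text{ways})$ where $f$ ranges over assignments of color classes to largest sum-free sets, and conversely that the claimed extremal $A$ achieve this bound.

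Next I would set up the clean combinatorial optimization that governs the leading constant. Writing $N=n/2$, a largest sum-free set has size $N$, and if $A$ is the union of largest sum-free sets $B_1,\dots,B_t$ then a sum-free $r$-coloring is essentially a function $A\to[r]$ such that each color class, intersected with each $B_i$, behaves like a sum-free set of $G$; the number of such colorings is $(1+o(1))$ times a sum over "patterns" (which subsets of $[r]$ are used in which regions of the Venn diagram of $B_1,\dots,B_t$) of a product of powers of $6$ (coming from $3^{N}$-type counts merged across overlaps). The key point is that the overlaps $B_i\cap B_j$, and triple overlaps, are themselves cosets of subgroups of controlled index, so their sizes are $N/2$, $N/4$, etc., and the exponent bookkeeping is exact, not just asymptotic. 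For $r=5$ and three largest sum-free sets one checks that the independent configuration $B_1\triangle B_2\triangle B_3$ uses the three pairwise-disjoint "private" parts plus the pairwise intersections, yielding the constant $181440=7!/4!$-type value, while in the dependent case $B_3=B_1\triangle B_2$ forces $B_1\cap B_2\cap B_3=\varnothing$ and the triple-overlap contribution collapses, giving $90$. I would verify these two numbers by a direct (finite) enumeration of admissible color patterns on the Venn regions — this is the routine-calculation step.

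The heart of the argument, and the main obstacle, is the \emph{uniqueness/optimality} step: showing that no other $A$ — in particular, unions of $\ge 4$ largest sum-free sets, or largest-sum-free sets enlarged by a few stray elements, or genuinely different near-sum-free configurations — can match the extremal count for $r=4,5$. This requires a supersaturation argument: any single Schur triple forced inside a color class must be "paid for", and one shows that adding a fourth independent largest sum-free set (or any $o(n)$ extra elements outside the union) strictly decreases the number of valid colorings by an exponential factor, because the new region cannot be colored freely without creating monochromatic triples across the overlaps. Concretely, I would compare $\log_6\kappa_r(A)$ for the candidate extremal $A$ against $N + (\text{pattern constant})/N\cdot$-corrections and show the union of $>r-2$ largest sum-free sets overshoots the sum-free constraint; the delicate case is distinguishing unions of exactly three largest sum-free sets (optimal for $r=5$) from unions of two-plus-a-little, which needs the exact overlap sizes and the precise value of the pattern optima. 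I expect the write-up to lean on Corollary~\ref{remark:reduction} for the $\triangle$-closure structure and on a careful case analysis of which color-to-container assignments are \emph{realizable} as genuinely sum-free colorings, since many assignments that look admissible at the level of sizes are killed by Schur triples lying across two containers.
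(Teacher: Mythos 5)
Your proposal follows the same high-level architecture as the paper (containers $\to$ stability $\to$ exact structure), but the heart of the exact part is under-specified, and there are two concrete gaps.

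First, the exactness argument must establish both $A\subseteq B_1\cup B_2\cup B_3$ \emph{and} $B_1\cup B_2\cup B_3\subseteq A$. Your "supersaturation" paragraph is aimed only at the first inclusion (ruling out stray elements outside the union). The second inclusion is not handled by supersaturation at all: if $x\in(B_1\cup B_2\cup B_3)\setminus A$, then $A\cup\{x\}$ is still a subset of a union of sum-free sets and has no Schur triples forced through $x$, so nothing is "paid for." The paper handles this via a good/bad coloring dichotomy (Definition~\ref{def:goodbadcol} and Claim~\ref{claim:goodbadcol}): good colorings (those respecting a substantial tuple of largest sum-free sets) can each be extended in at least two ways to a coloring of $A\cup\{x\}$, and good colorings are shown to dominate bad ones by an exponential factor; hence $\kappa_5(A\cup\{x\})\geq 2|\{\text{good}\}| > \kappa_5(A)$, a contradiction to maximality. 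Without some version of this two-sided argument your proof would at best show $A$ is \emph{contained} in such a union, not equal to it.

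Second, for the first inclusion, "some Schur triples across two containers" is too soft to close the gap. The precise mechanism in the paper is Lemma~\ref{lem:matching}: for any $x$ outside the union and any atom $\cB(\veceps)$, the map $a\mapsto x-a$ is a perfect matching of $\cB(\veceps)$ into pairs $\{a,b\}$ with $a+b=x$. This pins down a quantitative loss of $(8/9)^{n/16}$ on a single $3$-atom (see \eqref{eq:upbound}), which is exactly what beats the $2^{O(\gamma n)}$ error terms. A generic supersaturation estimate (counting Schur triples through $x$) does not obviously produce a per-element loss this clean, because the triples through $x$ need not be disjoint. Also, your heuristic for the constants is off: $7!/4!=210$, not $181440$; the actual count in the paper is $\binom{7}{2}\cdot 4\cdot\bigl(4\cdot 3\cdot 5!+3\cdot 4\cdot\tfrac{5!}{2}\bigr)=181440$ substantial quintuples, and it requires knowing that an independent triple generates exactly seven largest sum-free sets via the $\mathbb{F}_2^3$ correspondence (Corollary~\ref{remark:reduction}) and that most good colorings come from a \emph{unique} substantial tuple.
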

To emphasize the last point of the theorem note that the number of sum-free $5$-colorings admitted by 
an independent  triple and that of a dependent one are  within one another  by a multiplicative 
constant independent of $n$. In contrast, the stability type Theorem~\ref{thm:stability5} implies that any set which differs from these extremal configurations by 
$\Omega\big(n(\log n)^{-1/27}\big)$ elements admits  exponentially fewer 
sum-free $5$-colorings. This phenomenon neither appears for $r=4$ nor for $r=6$ or $r=7$ (see Section~\ref{sec:concludingremarks}).

\subsection{Related results}\label{sec:related}
Problems analogous to the ones considered in this paper were investigated  for many other discrete structures (see, e.g.,
\cite{Yuster,ABKS,PikhurkoYilma, LPRS,LPS,LefmannPerson,HKL}).

Among them the one concerning clique-free edge colorings of graphs is the most prominent one, which moreover seems closest to the problem studied here
due to the relationship between triangles and Schur triples\footnote{In $\mathbb{F}_2^n$, for example, consider for given $A\subset \mathbb{F}_2^n$ the Cayley graph $G_A$ which consists of the vertex set $\mathbb F_2^n$ and in which
$\{a,b\}$ forms an edge if and only if $a+b\in A$. A triangle $a,b,c$ in $G_A$ then corresponds to the Schur triple $a+b$, $b+c$ and $c+a=c-a$ in $A$.}.

This problem was raised by Erd\H{o}s and Rothchild in~\cite{Erdosproblem1} (see also \cite{Erdosproblem2}) and in~\cite{Yuster} Yuster  
showed  that, among all graphs on $n$ vertices, only the largest triangle-free graphs 
maximize the number of triangle-free $2$-colorings.
Using Szemer\'edi's regularity lemma Alon, Balogh, Keevash and Sudakov~\cite{ABKS}   generalized the result to $r=2, 3$ colors and  cliques $K_k$ of size $k\geq 3$. In this case 
 the Tur\'an graphs $T_{k-1}(n)$,  i.e., the balanced complete $(k-1)$-partite graphs on $n$ vertices, are the unique graphs which attain  
 the maximum number of $K_k$-free $r$-colorings.
 
Similar to sum-free colorings this phenomenon  does not persist for $r>3$ and the question becomes significantly harder. 
Building on the work of Alon et al., Pikhurko and Yilma~\cite{PikhurkoYilma} determine the unique maximizers 
for  $(r,k)=(4,3)$ and for $(r,k)=(4,4)$. These turn out to be the Tur\'an graphs
$T_{4}(n)$  for $k=3$ and   $T_{9}(n)$ for $k=4$.
For any other pair $(r,k)$, in particular for $r=5$ and $k=3$,  the problem remains  open.

% Despite the close relationship we do not see how to 
% directly deduce our result in Theorem~\ref{thm:main45} in the case $r=4$ from the result of Pikhurko and Yilma. 
% However, due to this relationship we believe that  number of $K_3$-free $5$-colorings is at most $6^{n^2/4+o(n^2)}$. 
% We hope to address  this question in near future.
 
\section{Outline of the proofs and stability theorems}
One possible approach to our problem is to use Green's regularity lemma for abelian groups~\cite{GreenRL}. 
In various analogous contexts regularity lemmas have proven to be a suitable tool~\cite{ABKS,PikhurkoYilma,LPRS,LPS,LefmannPerson}.
While this may work well here for groups with many subgroups such 
as $\mathbb F_p^n$ the technical difficulties are considerable for those lacking subgroups. 
A novel aspect of our work is  to avoid these difficulties by employing the so-called container method. %~(see~\cite{BMS,SaxtonThomason,GreenRuzsa}).
%, which would also work for settings in which regularity lemmas were used.
For sum-free sets this  comes in the form of a result by Green and Ruzsa~\cite{GreenRuzsa}
 which we  state in a slightly modified form. 
We also note that instead of the results of Green and Ruzsa one could also use the container results of  
Balogh, Morris, Samotij in~\cite{BMS}, that of Saxton, Thomason in~\cite{SaxtonThomason} or a version by Alon et al in~\cite{ABMS} which is closely related to that in~\cite{BMS}. 
 
\begin{theorem}[Proposition~2.1 in~\cite{GreenRuzsa}]\label{thm:container}
Let $G$ be a finite abelian group of sufficiently large order~$n$.
For every subset $A\subset G$ there is a family $\cF=\cF(A)$ of subsets of 
$A$ (called container family of~$A$) with the following properties
\begin{enumerate}
\item \label{it:container1}$\log_2 |\cF|\leq n(\log n)^{-1/18}$;
\item \label{it:container2}Every sum-free set $I\subset A$ is contained in some $F\in\cF$;
\item \label{it:container3}If $F\in\cF$ then $F$ contains at most $n^2(\log n)^{-1/9}$ Schur triples.
\end{enumerate}
\end{theorem}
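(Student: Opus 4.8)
The plan is to obtain Theorem~\ref{thm:container} as an application of the hypergraph container theorem --- in the version of Balogh, Morris and Samotij~\cite{BMS} or Saxton and Thomason~\cite{SaxtonThomason} --- to the hypergraph of Schur triples; this is the route already indicated after the statement, while the original Fourier-analytic argument of Green and Ruzsa achieves the same conclusion by more hands-on means, which I sketch at the end. For $A\subset G$ (with $0$ removed, since $0$ never lies in a sum-free set) let $H=H(A)$ be the $3$-uniform hypergraph on vertex set $A$ whose edges are the $3$-element sets $\{a,b,c\}\subset A$ admitting an ordering with $a+b=c$; the degenerate triples $\{a,2a\}$ will simply be ignored, for the reason explained below. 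A subset of $A$ is then sum-free precisely when it is an independent set of $H$ avoiding, in addition, the (negligibly few) degenerate pairs. The two features of $H$ that drive the argument are that $e(H)=\Theta(n^2)$ and that $H$ is \emph{balanced}: every single vertex lies in $O(n)$ edges, while every pair of vertices lies in only $O(1)$ edges, the latter because the equation $a+b=c$ is pinned down by any two of $a,b,c$.

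First I would check that this balancedness makes the co-degree function $\delta(H,\tau)$ appearing in the container theorem small for a suitable regularity parameter: taking $\tau$ to be a small negative power of $\log n$ (say $\tau=(\log n)^{-1/18}$, though even $\tau$ polynomially small in $n$ is permissible), the single-vertex and pair contributions to $\delta(H,\tau)$ are of order $1/(\tau n)$ and $1/(\tau^2 n)$ and hence $o(1)$. This is the same verification as in the textbook applications of containers to sets free of $3$-term progressions or to the Cameron--Erd\H{o}s problem, and it is also where the degenerate triples drop out: there are only $O(n)$ of them, so discarding them affects neither the co-degrees nor any estimate below. The container theorem then produces a family $\cF=\cF(A)$ of subsets of $A$ with the three desired properties: every independent set of $H$, hence every sum-free $I\subset A$, lies in some $F\in\cF$ (property~\eqref{it:container2}); $\log_2|\cF|$ is bounded by $C\tau n$ times polylogarithmic factors in $n$, which is at most $n(\log n)^{-1/18}$ for $n$ large (property~\eqref{it:container1}); and, using the ``sparse'' form of the theorem, every $F\in\cF$ spans at most $\eps\,e(H)$ edges for the prescribed $\eps=(\log n)^{-1/9}$, i.e.\ at most $n^2(\log n)^{-1/9}$ Schur triples once the $O(n)$ degenerate ones are added back (property~\eqref{it:container3}). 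The exponents $\tfrac1{18}$ and $\tfrac1{9}$ in the statement are in fact generous here: they are comfortably accommodated by the parameter choice, the only real constraint being that both error terms be driven to $o(1)$.

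The main obstacle, and the only genuinely non-cosmetic point, is the bookkeeping in the application of the container theorem: one must confirm that the explicit co-degree inequality it requires --- not merely $\delta(H,\tau)=o(1)$, but $\delta(H,\tau)$ below a prescribed function of $\eps$ and the uniformity $3$ --- holds at the chosen scale, and then track constants through the iteration that upgrades ``bounded co-degree'' into ``containers spanning only an $\eps$-fraction of the edges'', making sure the accumulated polylogarithmic losses still leave $\log_2|\cF|\le n(\log n)^{-1/18}$ and the Schur-triple count below $n^2(\log n)^{-1/9}$. An alternative that sidesteps the black-box container theorem --- closer to what Green and Ruzsa actually do --- is to build $\cF$ by hand through a removal-lemma/Fourier argument for Schur triples: to each sum-free $I$ one attaches a bounded amount of structural data (a Bohr-set description coming from its large Fourier coefficients, whose number is controlled by Parseval), observes that there are only few possible data, and shows that the sum-free condition confines $I$ to the corresponding structured container; balancing the structural parameter against the two error terms again yields the stated exponents. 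Either way, the arithmetic heart of the matter is the same removal/regularity input for Schur triples, and it is the $(\log n)$-power strength of that input which ultimately fixes the constants $\tfrac1{18}$ and $\tfrac1{9}$.
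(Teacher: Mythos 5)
The paper does not actually prove this statement: it imports it wholesale from Green and Ruzsa (their Proposition~2.1, which is stated for $A=G$), and in the sentence immediately following the theorem remarks that the version for arbitrary $A\subset G$ follows simply by replacing each container $F$ by $F\cap A$ --- all three properties survive this intersection, since a sum-free $I\subset A$ lying in $F$ also lies in $F\cap A$, and the Schur-triple count can only drop. Your proposal instead sketches a from-scratch derivation via the hypergraph container theorem of Balogh--Morris--Samotij / Saxton--Thomason. That is a genuinely different route, and it is in fact one the paper explicitly flags as an available alternative but does not carry out. So the comparison is: the paper spends one line (cite plus intersect), while you re-derive the source result, which buys self-containedness at the cost of having to track the container-theorem parameters.

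On the substance of your sketch, two points deserve attention. First, building $H=H(A)$ directly on the vertex set $A$ means $e(H)=\Theta(n^2)$ can fail when $A$ is sparse; the clean fix is either to observe that $\cF=\{A\}$ already satisfies (1)--(3) once $e(H(A))\le n^2(\log n)^{-1/9}$, or --- cleaner still, and exactly the paper's move --- to run the container argument once on $G$ and intersect with $A$ afterwards. Second, with $\tau=(\log n)^{-1/18}$ the container theorem returns $\log_2|\cF|=O(\tau n\,\mathrm{polylog}(n))$, which does not sit below the bare $n(\log n)^{-1/18}$ of property~(1); your hedge that $\tau$ may be taken polynomially small in $n$ is what actually rescues the bound, so it should be the default choice rather than an aside. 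The remaining ingredients --- co-degree bounds from the fact that any two of $a,b,c$ pin down the third, discarding the $O(n)$ degenerate triples, and the ``sparse'' form of the container theorem to control edges rather than vertices --- are correctly identified and would go through once these parameter issues are nailed down.
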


Roughly speaking the theorem states that all sum-free sets in $A$ can be ``captured'' by a small family of almost sum-free sets. %number of sets each of which contains only few sums. 
The  result of Green and Ruzsa  is stated for $A=G$,
however, we obviously obtain the family $\cF$ as above by taking the intersection of each $F$ with $A$. Here and in what follows 
the dependence on $A$ is regularly suppressed as it is clear from the context.

With Theorem~\ref{thm:container} as the starting point we make the following simple but crucial observation.
\begin{observation}
\label{obs:PhiInContainer}
Let $\cF=\cF(A)$ be  a container family  as in Theorem~\ref{thm:container} and let $\Phi_r(A)$ denote the set of all sum-free $r$-colorings of $A$.
To each $\phi\in\Phi_r(A)$ assign a tuple $(F_1,\dots,F_r) \in \cF^r$  such that $\phi^{-1}(i) \subseteq F_i$ for every $i \in [r]$ and 
let $\Phi(F_1,\dots,F_r)$ denote the set of all $\phi\in\Phi(A)$ assigned to $(F_1,\dots,F_r)$.  
Note that this assignment is possible due to~(\ref{it:container2}) of Theorem~\ref{thm:container} and we have
\begin{align}\label{eq:PhiInContainer1}\Phi_r(A)=\bigcup_{(F_1,\dots,F_r)\in\cF^r}\Phi(F_1,\dots,F_r).\end{align}
Further, let $n_k$ denote the number of elements in $F_1\cup\dots\cup F_r=A$ which are contained in exactly~$k$  sets of  $(F_1,\dots,F_r)$.
Then \begin{align}\label{eq:PhiInContainer2}|\Phi(F_1,\dots,F_r)|\leq \prod_{k\in[r]}k^{n_k}\qquad \text{and} \qquad \sum_{k\in[r]}k\cdot n_k=\sum_{k\in[r]} |F_k|.\end{align}
\end{observation}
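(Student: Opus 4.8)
The plan is to verify the three assertions directly from the definitions, since all the substance is contained in Theorem~\ref{thm:container}, which we may assume. First, to justify \eqref{eq:PhiInContainer1}, I would observe that the rule $\phi\mapsto(F_1,\dots,F_r)$ is a genuine function from $\Phi_r(A)$ to $\cF^r$: for each $\phi\in\Phi_r(A)$ every color class $\phi^{-1}(i)$ is sum-free, hence by property~\eqref{it:container2} of Theorem~\ref{thm:container} it is contained in some member of $\cF$, and we fix one such set $F_i$ for each $i\in[r]$. By construction $\phi\in\Phi(F_1,\dots,F_r)$, so the sets $\Phi(F_1,\dots,F_r)$ are precisely the fibers of this function and therefore cover $\Phi_r(A)$; the reverse inclusion is immediate because each $\Phi(F_1,\dots,F_r)$ consists of sum-free $r$-colorings of $A$ by definition. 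In fact the fibers are pairwise disjoint, so \eqref{eq:PhiInContainer1} is a partition, which is all we need.

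Next I would record that any tuple $(F_1,\dots,F_r)$ with $\Phi(F_1,\dots,F_r)\neq\emptyset$ automatically satisfies $F_1\cup\dots\cup F_r=A$: picking $\phi$ assigned to this tuple, every $a\in A$ lies in $\phi^{-1}(\phi(a))\subseteq F_{\phi(a)}$, so $a$ belongs to at least one $F_i$, and combined with $F_i\subseteq A$ this gives the claimed equality. Consequently the numbers $n_k$ are well defined and $\sum_{k\in[r]}n_k=|A|$. For the first inequality in \eqref{eq:PhiInContainer2}, fix such a tuple and note that any $\phi\in\Phi(F_1,\dots,F_r)$ must send each $a\in A$ into the set $\{\,i\in[r]:a\in F_i\,\}$, again because $\phi(a)=i$ forces $a\in\phi^{-1}(i)\subseteq F_i$. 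Hence the number of admissible colorings is at most $\prod_{a\in A}|\{\,i:a\in F_i\,\}|$, and grouping the factors according to how many of the $F_i$ contain $a$ rewrites this product as $\prod_{k\in[r]}k^{n_k}$. The identity $\sum_{k\in[r]}k\cdot n_k=\sum_{k\in[r]}|F_k|$ is the standard double count of incidences between the elements of $A$ and the sets $F_1,\dots,F_r$: summing $|\{\,i:a\in F_i\,\}|$ over $a\in A$ counts each incidence from the element side and yields the left-hand side, while summing $|F_k|$ over $k\in[r]$ counts the same incidences from the set side.

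I do not expect any serious obstacle here; the statement is essentially self-proving once Theorem~\ref{thm:container} is available, and an honest write-up is a matter of being careful with the bookkeeping. The only point that deserves a sentence of its own is the remark that a tuple in the image of the assignment has union exactly $A$, since this is what guarantees that the exponents $n_1,\dots,n_r$ add up to $|A|$ and thus lets later arguments treat $(n_1,\dots,n_r)$ as a composition of $|A|$ when optimizing the product $\prod_{k\in[r]}k^{n_k}$.
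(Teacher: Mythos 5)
Your proof is correct and matches the reasoning the paper treats as self-evident (the Observation carries no separate proof, only the inline remark that the assignment exists by property~(\ref{it:container2}) of Theorem~\ref{thm:container}). Your explicit note that any tuple in the image of the assignment satisfies $F_1\cup\dots\cup F_r=A$ is a worthwhile clarification that the paper glosses over.
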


As the number of $r$-tuples $(F_1,\dots,F_r)$ is at most $|\cF|^r\leq 2^{rn(\log n)^{-1/18}}\ll r^{\mu(G)}\leq \kappa_{r,G}$,
a  set~$A$ which admits about as many sum-free $r$-colorings as $\kappa_{r,G}$ must give rise to a ``substantial''
$r$-tuple $(F_1,\dots,F_r)$, i.e., one for which $|\Phi(F_1,\dots,F_r)|$ is about as large as $\kappa_{r,G}$.
This information will be used to derive  stability type results  which form an important step in the proofs.
The case $r=2,3$  reads as follows. 

\begin{theorem}
\label{thm:stability23}
Suppose that $r\in\{2,3\}$,  $q\in\mathbb{N}$ and 
$0<\eps< \frac1{(q+1)}$.
Let $G$ be a type~I($q$) group of sufficiently large  order $n$ and let
 $A\subset G$ be such that $\kappa_r(A)>r^{\mu(G)-\frac{\eps n}{200}}$. Then there is a largest sum-free set $B\subset G$
such that $|A\setminus B|<\eps n$.
\end{theorem}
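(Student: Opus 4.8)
The plan is to combine Observation~\ref{obs:PhiInContainer} with the structural description of almost sum-free sets in type~I($q$) groups. Suppose $\kappa_r(A) > r^{\mu(G)-\eps n/200}$. By~\eqref{eq:PhiInContainer1} and the bound $|\cF|^r \le 2^{rn(\log n)^{-1/18}}$, there is a single tuple $(F_1,\dots,F_r)\in\cF^r$ with $|\Phi(F_1,\dots,F_r)| \ge r^{\mu(G)-\eps n/100}$ (say), for $n$ large. By~\eqref{eq:PhiInContainer2} this forces $\prod_k k^{n_k} \ge r^{\mu(G)-\eps n/100}$, where $n_k$ counts elements of $A$ lying in exactly $k$ of the $F_i$'s and $\sum_k k\cdot n_k = \sum_k |F_k|$. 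First I would extract from this two facts: (i) $\sum_k |F_k|$ must be only slightly below $r\mu(G)$, so in particular each $|F_i| \ge \mu(G)-o(n)$ — otherwise, even taking $n_r$ as large as the constraint allows, the product $\prod k^{n_k}$ would be bounded by roughly $r^{(\sum|F_k|)/r}$, well below the target; (ii) almost all mass sits at $k=r$, i.e.\ $n_k = o(n)$ for $k<r$ (or at least $\sum_{k<r} n_k \cdot \log(r/k) = o(n)$, using convexity of $x\log x$ again on the constraint $\sum k n_k = \sum |F_k|$). Thus for each $i$, $|F_i| \ge \mu(G) - O(n(\log n)^{-1/45})$ and the symmetric difference $|F_i \triangle F_j|$ is small.

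Next I would invoke property~\eqref{it:container3}: each $F_i$ is a subset of $G$ of size at least $\mu(G)-o(n)$ containing at most $n^2(\log n)^{-1/9}$ Schur triples. Here the special structure of type~I($q$) groups (referenced in Section~\ref{sec:groups}) enters: a removal-type / stability result for sum-free sets says that such an $F_i$ must be within $o(n)$ — in fact $O(n(\log n)^{-c})$ — elements of a genuine largest sum-free set $B_i$ of $G$. (This is precisely the kind of statement one reads off from Green–Ruzsa's analysis for type~I groups; one deletes a few elements from $F_i$ to destroy all Schur triples, obtaining a sum-free set of size $\ge \mu(G)-o(n)$, and then the robust characterization of near-extremal sum-free sets in type~I($q$) groups — they are essentially determined by a coset structure modulo $q$ — pins it down to a largest sum-free set.) Having done this for $i=1$, set $B=B_1$.

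Finally I would assemble the pieces: $A = F_1\cup\dots\cup F_r$, and I want $|A\setminus B|<\eps n$. I have $|F_1\setminus B| = o(n)$, so it remains to bound $|F_i\setminus F_1|$ for $i\ge 2$, which is at most $|F_i\triangle F_1|$; but this was shown to be $o(n)$ in step~(ii) above (elements counted in $n_k$ for $k<r$ control exactly the symmetric differences among the $F_i$). Adding up, $|A\setminus B| \le |F_1\setminus B| + \sum_{i\ge 2}|F_i\setminus F_1| = o(n) < \eps n$ for $n$ large enough. I would track constants carefully so that the loss $\eps n/200$ in the hypothesis propagates to a genuine $<\eps n$ in the conclusion; the bookkeeping is routine since every error term is $O(n(\log n)^{-1/45})$, which is $o(\eps n)$.

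The main obstacle I anticipate is step~(ii) together with the passage from ``$F_i$ has few Schur triples and near-maximum size'' to ``$F_i$ is close to a largest sum-free set'': one needs a sufficiently robust stability theorem for sum-free sets in type~I($q$) groups, uniform over all such groups of large order. The counting/convexity argument bounding $\prod_k k^{n_k}$ under the linear constraint is elementary but must be executed with enough precision to guarantee that \emph{all} $F_i$ (not just one) are close to \emph{the same} $B$; the fact that the $F_i$ are pairwise $o(n)$-close is what glues them to a common largest sum-free set, and getting the quantitative dependence right there is the delicate part.
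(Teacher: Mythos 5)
Your proposal is essentially the paper's proof: both use the container family together with supersaturation (Lemma~\ref{lem:supersaturation}) to bound $|F_i|$ from above, the counting identity from Observation~\ref{obs:PhiInContainer} to force $n_r \ge \mu(G) - O(\eps n)$, the Green--Ruzsa removal lemma (Lemma~\ref{lem:removal}) to extract a large sum-free subset, and the type~I($q$) stability lemma (Lemma~\ref{prop:re1}) to locate the largest sum-free set $B$. The only cosmetic difference is that the paper applies the removal argument to $F=\bigcap_i F_i$ (showing $|A\setminus F|=O(\eps n)$ directly from the $n_k$'s) whereas you apply it to $F_1$ and then use $|F_i\triangle F_1|=O(\eps n)$; one small slip in your bookkeeping is that the dominant error is $\Theta(\eps n/200)$ rather than $O(n(\log n)^{-1/45})=o(\eps n)$, which is precisely why the constant $200$ must be tracked to land under $\eps n$.
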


For $r=4,5$ and  even order groups we will need a stronger notion of stability which  is  harder to establish. One reason for the complication is that in 
these  cases the straightforward lower bound $\kappa_{r,G}\geq r^{\mu(G)}$ is  far from 
best possible, as shown by the following.

\begin{proposition}
\label{prop:lowerbounds}
Given an abelian group $G$ of even order with at least two largest sum-free sets and  let $B_1,B_2,B_3$ be largest sum-free sets in $G$ with $|\{B_1,B_2,B_3\}|\geq 2$. Then
\[\kappa_{4}(B_1\cup B_2) \geq (3\sqrt 2)^{n/2}\qquad\text{ and }\qquad \kappa_{5}(B_1\cup B_2\cup B_3) \geq 6^{n/2}.\]
\end{proposition}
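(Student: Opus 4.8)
The plan is to prove both inequalities by exhibiting, in each case, an explicit and sufficiently large family of sum‑free colorings of $A$, organised around the coset structure carried by the sets $B_i$; the starting point is that, $G$ having even order, $\mu(G)=n/2$ and every largest sum‑free set of $G$ equals $\psi^{-1}(1)$ for some surjective homomorphism $\psi\colon G\to\mathbb{Z}_2$ (see Section~\ref{sec:groups}). Thus two distinct largest sum‑free sets $B_1=\psi_1^{-1}(1)$ and $B_2=\psi_2^{-1}(1)$ come from distinct nonzero homomorphisms, and hence $(\psi_1,\psi_2)\colon G\to\mathbb{Z}_2^2$ is onto with kernel $K$ of order $n/4$. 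Writing $K_{ab}$ for the coset on which $(\psi_1,\psi_2)=(a,b)$, one has $B_1\setminus B_2=K_{10}$, $B_2\setminus B_1=K_{01}$ and $B_1\cap B_2=K_{11}$, each of size $n/4$, so that $A:=B_1\cup B_2=K_{10}\cup K_{01}\cup K_{11}$; in passing this reproves that $B_1\triangle B_2$ is again a largest sum‑free set (Corollary~\ref{remark:reduction}). The decisive observation is then: since $0\notin A$, every Schur triple $a+b=c$ with $a,b,c\in A$ consists of three \emph{distinct} elements whose $(\psi_1,\psi_2)$‑types are $10$, $01$ and $11$ in some order; equivalently, it meets each of $K_{10},K_{01},K_{11}$ exactly once. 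Consequently, if we fix palettes $C_{10},C_{01},C_{11}\subseteq[r]$ with $C_{10}\cap C_{01}\cap C_{11}=\emptyset$ and colour each coset using only colours from its own palette, then no colour class can contain a monochromatic Schur triple, and this produces $\bigl(|C_{10}|\cdot|C_{01}|\cdot|C_{11}|\bigr)^{n/4}$ sum‑free $r$‑colorings of $A$.

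Next I would insert optimal palettes. For $r=4$, take $C_{10}=\{1,2,3\}$, $C_{01}=\{1,2,4\}$, $C_{11}=\{3,4\}$ — whose triple intersection is empty — for a total of $18^{n/4}=(3\sqrt2)^{n/2}$ colorings. For $r=5$, whenever $A$ is a union of two distinct largest sum‑free sets — which covers both the case that only two of $B_1,B_2,B_3$ are distinct and the case that all three are distinct but $(B_1,B_2,B_3)$ is dependent, since a dependent triple of distinct sets satisfies $B_i=B_j\triangle B_k\subseteq B_j\cup B_k$ after relabelling — take $C_{10}=\{1,2,3\}$, $C_{01}=\{1,2,4,5\}$, $C_{11}=\{3,4,5\}$, again with empty triple intersection, for a total of $36^{n/4}=6^{n/2}$. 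That $18$ and $36$ are the best products attainable in this scheme follows from $|C_{10}|+|C_{01}|+|C_{11}|\le 2r$, as no colour may lie in all three palettes.

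The remaining case, $r=5$ with $(B_1,B_2,B_3)$ independent, is where the real work is. Independence of the triple is equivalent to $\psi_1,\psi_2,\psi_3$ being linearly independent over $\mathbb{Z}_2$, so $(\psi_1,\psi_2,\psi_3)\colon G\to\mathbb{Z}_2^3$ is onto with kernel of order $n/8$, and $A=B_1\cup B_2\cup B_3$ is the union of the seven nontrivial cosets $K_v$, $v\in\mathbb{Z}_2^3\setminus\{0\}$. I would identify these cosets with the points of the Fano plane, whose lines are exactly the triples of distinct nonzero vectors summing to $0$; as before, since $0\notin A$, a Schur triple inside $A$ is a triple of distinct elements whose types form a line. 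Now fix any $5$ of the $7$ lines, say $\ell_1,\dots,\ell_5$, and allow colour $i$ only on the cosets $K_v$ with $v\notin\ell_i$. Because any two lines of the Fano plane meet, the complement of a line contains no line, so each colour is permitted on a line‑free set of cosets and no monochromatic Schur triple can occur; the number of colorings produced is $\prod_{v\ne 0}\bigl|\{\,i\in[5]:v\notin\ell_i\,\}\bigr|^{n/8}$. A short incidence count with the two unused lines (each point lies on three lines, any two lines meet in one point) shows that among the seven cosets one is allowed $4$ colours, four are allowed $3$, and two are allowed $2$, so the product equals $\bigl(4\cdot3^4\cdot2^2\bigr)^{n/8}=1296^{n/8}=6^{n/2}$.

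The step I expect to be the main obstacle is precisely this last case: one has to identify the Schur triples inside the $\mathbb{Z}_2^3$‑union with the Fano lines, recognise that the largest line‑free subsets of the Fano plane are the seven line‑complements (of size $4$), and check that selecting five of the seven lines produces the product $1296=6^4$ exactly. Everything else is routine; in particular, degenerate Schur triples (with a repeated entry, or with $c=0$) never occur inside $A$, since $0\notin A$.
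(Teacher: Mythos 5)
Your proof is correct and, despite the different packaging, constructs exactly the same family of colorings as the paper: in each case you permit colour $i$ only on the preimage of a largest sum-free set of the quotient $\mathbb{F}_2^t$ (a coset union), and the bound is the product over atoms of the number of permitted colours. Your ``no colour in all three palettes'' criterion for $r=4,5$ dependent, and the Fano-plane/line-complement criterion for $r=5$ independent, are equivalent reformulations of the paper's ``$\phi^{-1}(i)\subset B_{f(i)}$ for a fixed largest sum-free set $B_{f(i)}$'' (the complement of a Fano line is precisely a largest sum-free set in $\mathbb{F}_2^3$), and your incidence count $4\cdot3^4\cdot2^2=6^4$ matches the paper's $3^{4}\cdot2^{2}\cdot4$.
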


Proposition~\ref{prop:lowerbounds} will be proven at the end of Section~\ref{sec:groups}. We continue with the description of the structure of ``substantial'' tuples, 
i.e., those $(F_1,\dots,F_r)$ such that $|\Phi(F_1,\dots,F_r)|$ is about as large  as the  bounds from above. The statement
of the results requires some notation.
 
Let $G$ be a finite abelian group of even order and
let $\cB=(B_1,\dots,B_t)$ be an ordered tuple of not necessarily distinct largest sum-free sets of $G$.
For a largest sum-free set $B$ let $B^1=B$ and let $B^0=G\setminus B$ be its complement. 
For an  $\veceps=(\eps_1,\dots,\eps_t)\in\{0,1\}^t$ define the \emph{atom} $\cB(\veceps)$ via
\[\cB(\veceps)=\bigcap_{i\in[t]}B_i^{\eps_i}.\]
If $\sum_{i\in[t]}\eps_i=k$ then 
$\cB(\veceps)$ is referred to as a $k$-\emph{atom}. Hence, the elements in a $k$-atom are contained in exactly $k$ of the $B_i$'s.
Finally, we say that $\cB=(B_1,\dots,B_t)$ consists of a collection of certain atoms if 
these atoms partition  $\cup_{i\in[t]} B_i$.

For four colors  the intersection structure of ``substantial'' tuples are identified as follows. 
\begin{theorem}
\label{thm:stability4}
Given an abelian group $G$ of sufficiently large even order~$n$. 
Let~$A \subseteq G$ and let $\cF=\cF(A)$ be a container family as in Theorem~\ref{thm:container}.
If $B$ is the unique largest sum-free set in $G$ and $A$ satisfies $\kappa_4(A)>3.999^{n/2}$, then $|A\setminus B|<12 n(\log n)^{-1/27}$.

If $G$ has at least two largest sum-free sets
and the tuple $(F_1,\dots,F_4)\in\cF^4$ satisfies \[|\Phi(F_1,\dots,F_4)|> \left(3\sqrt{2}-\frac1{25}\right)^{n/2},\]
then there exist three largest sum-free sets $B_1, B_2$ and $B_3=B_1\triangle B_2$ 
and a function \mbox{$f:[4]\to [3]$} such that the following holds
\begin{itemize}
\item $|F_i\setminus B_{f(i)}| <3 n (\log n)^{-1/27}$ for all $i\in[4]$, and
\item  $(B_{f(1)},\dots, B_{f(4)})$ consists of one $2$-atom, and two $3$-atoms.  
\end{itemize}
\end{theorem}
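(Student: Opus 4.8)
The plan is to exploit the two inequalities in~\eqref{eq:PhiInContainer2} of Observation~\ref{obs:PhiInContainer}, combined with the fact that each $F_i$ is almost sum-free (property~\eqref{it:container3} of Theorem~\ref{thm:container}) and hence, by a suitable removal/stability argument for sum-free sets in even order groups, lies within $o(n)$ of a genuine largest sum-free set. First I would dispose of the case of a unique largest sum-free set $B$: if $\kappa_4(A)>3.999^{n/2}$ then, since $|\cF|^4=2^{o(n)}$, some tuple $(F_1,\dots,F_4)$ satisfies $|\Phi(F_1,\dots,F_4)|>3.999^{n/2}/2^{o(n)}$, and then $\prod_k k^{n_k}\geq 3.999^{n/2}$ forces almost all of $A=\cup F_i$ to lie in $4$-atoms, i.e.\ in $\cap F_i$; since each $F_i$ is close to $B$ (the only option), $\cap F_i$ is within $O(n(\log n)^{-1/27})$ of $B$, giving the claimed bound after tracking constants.

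For the main case I would argue as follows. Set $a_i=|F_i|$ and let $b$ be close to $\mu(G)=n/2$; the almost-sum-free structure theorem (a stability companion of Theorem~\ref{thm:container}, obtainable from the Green--Ruzsa machinery, as already used for Theorem~\ref{thm:stability23}) gives largest sum-free sets $B_i'$ with $|F_i\setminus B_i'|<\delta n$ where $\delta=n(\log n)^{-1/27}$ up to an absolute constant. The key point is the optimization: with $\sum_k k\,n_k=\sum_i|F_i|\le 4b+o(n)$ and $\sum_k n_k=|A|$, the product $\prod_k k^{n_k}$ subject to these constraints is maximized by putting weight on $2$-atoms and $3$-atoms (since $2^{1/2},3^{1/3}$ dominate $1^{1/1},4^{1/4}$ in the relevant range), and the maximum value is exactly $(3\sqrt2)^{n/2}$, attained when the multiset of atom-sizes is, up to $o(n)$, one $2$-atom and two $3$-atoms each of size $\sim n/2$. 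Imposing $|\Phi(F_1,\dots,F_4)|>(3\sqrt2-\tfrac1{25})^{n/2}$ then forces, via a stability version of this inequality, that the distribution of elements of $A$ among the $2^4$ Venn regions of $(F_1,\dots,F_4)$ is $O(\delta n)$-close to that configuration: almost every element lies in exactly two or exactly three of the $F_i$, and the coincidence pattern is (up to relabeling) the one described.

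It then remains to upgrade this ``$F$-level'' information to the claimed ``$B$-level'' statement. From $|F_i\triangle B_i'|<O(\delta n)$ and the fact that the Venn regions of $(F_1,\dots,F_4)$ are forced into the one-$2$-atom/two-$3$-atom pattern, one deduces that the multiset $\{B_1',\dots,B_4'\}$ has exactly three distinct members $B_1,B_2,B_3$ realizing that atom structure; the relation $B_3=B_1\triangle B_2$ is then automatic from Corollary~\ref{remark:reduction} (two largest sum-free sets in an even order group yield their symmetric difference as a third, and an $r$-tuple of largest sum-free sets whose atoms have the prescribed sizes must be of this dependent shape). Defining $f:[4]\to[3]$ by $f(i)=$ the index with $B_i'=B_{f(i)}$ gives $|F_i\setminus B_{f(i)}|<3n(\log n)^{-1/27}$ after fixing the absolute constant in the stability bound, and $(B_{f(1)},\dots,B_{f(4)})$ consists of one $2$-atom and two $3$-atoms as required.

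The main obstacle I expect is the quantitative stability of the entropy-type inequality $\prod_k k^{n_k}\le (3\sqrt2)^{n/2}$: one needs that being within a factor $(1-\tfrac1{25\cdot 3\sqrt2})^{n/2}$ of the optimum forces the Venn-region distribution to be $O(n(\log n)^{-1/27})$-close to the extremal one, with constants clean enough to survive the passage from $F_i$ to $B_{f(i)}$ and to beat the $n(\log n)^{-1/18}$ error from $|\cF|$. This is a convex-optimization stability estimate, but making the constants explicit—and checking that no other atom pattern (e.g.\ mixing in $1$-atoms or $4$-atoms, or a different count of $2$- vs $3$-atoms) comes within the allowed slack—is the delicate computational heart of the argument.
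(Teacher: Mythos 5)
Your proposed route has a genuine gap in the central optimization step, and a secondary omission.

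\textbf{The main problem: the entropy inequality over $F$-level Venn regions is false as stated.}
You want to argue that, with $\sum_k k\,n_k=\sum_i|F_i|\le 4\mu(G)+o(n)=2n+o(n)$ (and possibly $\sum_k n_k=|A|$), the quantity $\prod_k k^{n_k}$ is maximized at $(3\sqrt2)^{n/2}$, realized by the one-$2$-atom/two-$3$-atom pattern, and then invoke stability of this optimization. But with those constraints alone the maximum is \emph{not} $(3\sqrt2)^{n/2}$: the rate $\log k/k$ is maximized at $k=3$, so putting all the budget on $3$-atoms gives $n_3=2n/3$ and $\prod_k k^{n_k}=3^{2n/3}$, and $3^{2n/3}>(3\sqrt2)^{n/2}$ (compare $\log_3$: $2/3\approx 0.667>0.658\approx\tfrac12+\tfrac14\log_3 2$). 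Your own observation that ``$3^{1/3}$ dominates'' already points to this: nothing in your constraints prevents the $n_3=2n/3$ configuration. So no stability argument at the $F$-level can force the $(3\sqrt2)^{n/2}$ bound or the desired atom pattern. What actually rules out such configurations is the rigid structure of atoms of a \emph{tuple of genuine largest sum-free sets}: by Corollary~\ref{remark:reduction}, for an independent $t$-tuple every atom has size exactly $n/2^t$ and there are exactly $2^t-1$ nonzero atoms, so for $t=2$, $r=4$ the atom counts $(a_2,a_3,a_4)$ must solve $a_2+a_3+a_4=3$, $2a_2+3a_3+4a_4=8$, whose only solutions are $(2,0,1)$ and $(1,2,0)$. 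That discrete, integer-valued rigidity is what makes the optimum $(3\sqrt2)^{n/2}$ and excludes the continuous $3^{2n/3}$ alternative. Consequently the correct order of operations is the paper's: first replace each $F_i$ by a nearby largest sum-free set $B_i$ and transfer the colorings $|\Phi(F_1,\dots,F_4)|\le r^{r\gamma n}|\Phi(B_1,\dots,B_4)|$ (as in the deduction of Theorem~\ref{thm:stability4} from Lemma~\ref{lem:alllargest}), and only \emph{then} optimize over tuples of largest sum-free sets using Corollary~\ref{remark:reduction} (this is Lemma~\ref{lem:OptStructure}). Doing the optimization at the $F$-level first and ``upgrading'' to the $B$-level afterwards cannot work.

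\textbf{The secondary omission: you must first show each $F_i$ is large.}
You assert that removal/stability machinery gives a largest sum-free set $B_i'$ with $|F_i\setminus B_i'|<\delta n$, but the almost-sum-free property of $F_i$ alone is not enough: after removing $\gamma n$ elements via Lemma~\ref{lem:removal} one needs the resulting sum-free set to exceed $\tfrac25 n$ to apply Lemma~\ref{lem:3over8}. A small $F_i$ has no reason to be near any largest sum-free set. The paper handles this (Lemma~\ref{lem:alllargest}) by showing that if some $|F_i|\le\tfrac25n+\gamma n$, then $\sum_i|F_i|$ is small enough to force $\log_3|\Phi(F_1,\dots,F_4)|\le\tfrac13\sum_i|F_i|<\log_3 2.01^{n}$, contradicting the substantiality assumption. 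Both in your unique-$B$ paragraph (``each $F_i$ is close to $B$, the only option'') and in the multi-$B$ paragraph this step is assumed rather than argued, and it is not automatic.
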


To formulate the corresponding result for five colors, we first note that
for an independent triple of largest sum-free sets $(B_1,B_2, B_3)$, there are seven largest sum-free sets contained in $B_1\cup B_2\cup B_3$.
This will be shown in Section~\ref{sec:groups} (see  Corollary~\ref{remark:reduction}).
The structure of substantial tuples for five colors then reads as follows.

\begin{theorem}
\label{thm:stability5}
Given an abelian group $G$ of sufficiently large even order~$n$. Let~$A \subseteq G$ 
and let $\cF$ be a container family of $A$ as in Theorem~\ref{thm:container}.
If $B$ is the unique largest sum-free set in $G$, 
and $A$ satisfies $\kappa_5(A)>4.999^{n/2}$, then $|A\setminus B|<15 n(\log n)^{-1/27}$.

If $G$ has at least two largest sum-free sets
and the tuple $(F_1,\dots,F_5)\in\cF^5$ satisfies \[|\Phi(F_1,\dots,F_5)|> 5.9^{n/2},\] then
there are 
three largest sum-free sets $B_1, B_2, B_3$ of $G$ such that one of the following holds.
\begin{enumerate}
 \item\label{it:stability51} 
 $B_3= B_1 \triangle B_2$ and there is a function $f: [5]\to[3]$ such that
\begin{itemize}
\item $|F_i\setminus B_{f(i)}|<3 n(\log n)^{-1/27}$ for all $i\in[5]$ and
\item $(B_{f(1)},\dots, B_{f(5)})$ consists of one $4$-atom, and two $3$-atoms. 
\end{itemize}
\item\label{it:stability52} There are four distinct largest sum-free sets $B_4,\dots, B_7$ contained in $B_1\cup B_2\cup B_3$ 
and a function $f: [5]\to [7]$ such that 
\begin{itemize}
\item $|F_i\setminus B_{f(i)}|<3 n(\log n)^{-1/27}$ for all $i\in[5]$ and
\item $(B_{f(1)},\dots, B_{f(5)})$ consists of two $2$-atoms, four $3$-atoms, and one $4$-atom. 
\end{itemize}
\end{enumerate}
\end{theorem}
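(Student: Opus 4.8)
The plan is to run the container strategy of Observation~\ref{obs:PhiInContainer}: reduce the hypothesis to information about a single tuple $(F_1,\dots,F_5)\in\cF^5$, show that each $F_i$ is, up to a negligible error, one of the largest sum-free sets of $G$, and then solve a purely finite optimisation over configurations of largest sum-free sets. For the reduction, if $G$ has a unique largest sum-free set then $\kappa_5(A)>4.999^{n/2}$ together with $|\cF|^5=2^{o(n)}$ and~(\ref{eq:PhiInContainer1}) yields a tuple with $|\Phi(F_1,\dots,F_5)|>4.99^{n/2}$; otherwise the tuple is given. Fix it and write $d(x)=\#\{i\in[5]:x\in F_i\}$ for $x\in A=F_1\cup\dots\cup F_5$. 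Recall $|\Phi(F_1,\dots,F_5)|\le\prod_{x\in A}d(x)$ from~(\ref{eq:PhiInContainer2}), that each $F_i$ spans at most $n^2(\log n)^{-1/9}$ Schur triples by part~(\ref{it:container3}) of Theorem~\ref{thm:container}, and hence, by supersaturation for sum-free sets, that $|F_i|\le\mu(G)+o(n)=\tfrac n2+o(n)$.

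First I would show that every $F_i$ is essentially a largest sum-free set. Since $k\le 3^{k/3}$ for each integer $k\ge1$, we have the crude bound $\prod_{x\in A}d(x)\le\prod_{x\in A}3^{d(x)/3}=3^{\frac13\sum_i|F_i|}$; comparing it with the threshold and using $\mu(G)=n/2$ forces $|F_i|\ge\mu(G)-\gamma n$ for all $i$, where $\gamma$ is a small absolute constant. Being near-maximal and almost sum-free, $F_i$ is within $O(\gamma n)$ of a largest sum-free set $B_i$, which by the description of such sets in Section~\ref{sec:groups} equals $\phi_i^{-1}(1)$ for a surjective homomorphism $\phi_i\colon G\to\mathbb Z_2$. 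This closeness can then be sharpened: if $x\in F_i\setminus B_i$ then $\phi_i(x)=0$, so $x+b\in B_i$ for every $b\in B_i$, whence $x$ lies in at least $|F_i\cap B_i|-|B_i\setminus F_i|=\Omega(n)$ Schur triples of $F_i$; since $F_i$ has at most $n^2(\log n)^{-1/9}$ of these, $|F_i\setminus B_i|<3n(\log n)^{-1/27}$.

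It remains to classify the admissible configurations. Replacing each $F_i$ by $B_i$ alters $\prod_x d(x)$ by a factor $5^{O(\sum_i|F_i\setminus B_i|)}=2^{o(n)}$, so it suffices to determine the tuples $(B_1,\dots,B_5)$ of largest sum-free sets with $\prod_{x}d_B(x)>5.89^{n/2}$, where $d_B(x)=\#\{i:x\in B_i\}$. Let $V$ be the $\mathbb F_2$-span of $\phi_1,\dots,\phi_5$ in $\mathrm{Hom}(G,\mathbb Z_2)$ and $d=\dim_{\mathbb F_2}V$; pick a basis $\psi_1,\dots,\psi_d$ of $V$, so that $\Phi=(\psi_1,\dots,\psi_d)\colon G\to\mathbb F_2^d$ is onto with all nonzero fibres of size $n/2^d$, and $d_B(x)=w(y):=\#\{i:\langle c_i,y\rangle=1\}$ on the fibre over $y$, where $c_i\in\mathbb F_2^d\setminus\{0\}$ is the coordinate vector of $\phi_i$. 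Hence
\[\prod_{x}d_B(x)=\Bigl(\prod_{0\ne y\in\mathbb F_2^d}w(y)\Bigr)^{n/2^d},\]
and a case analysis on $d$ and on the multiset $\{c_1,\dots,c_5\}$ shows this beats $5.89^{n/2}$ exactly in two cases: (i) $d=2$ with multiplicity pattern $(1,2,2)$ over $V\setminus\{0\}$, where the product is $36^{n/4}=6^{n/2}$ and the three values $w(y)$ are $4,3,3$, i.e.\ $(B_{f(1)},\dots,B_{f(5)})$ consists of one $4$-atom and two $3$-atoms with $B_3=B_1\triangle B_2$ — this is~(\ref{it:stability51}); and (ii) $d=3$ with $c_1,\dots,c_5$ five distinct vectors of $\mathbb F_2^3\setminus\{0\}$ (any such choice spans $V$, and the two complementary vectors are independent, forcing the seven values $w(y)$ to be $\{2,2,3,3,3,3,4\}$), where the product is $1296^{n/8}=6^{n/2}$ and $(B_{f(1)},\dots,B_{f(5)})$ consists of two $2$-atoms, four $3$-atoms and one $4$-atom among the seven largest sum-free sets in $B_1\cup B_2\cup B_3$ provided by Corollary~\ref{remark:reduction} — this is~(\ref{it:stability52}). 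All remaining possibilities fall strictly below $5.89^{n/2}$: $d=1$ gives $5^{n/2}$; $d=2$ with pattern $(1,1,3)$ or with a missing vector gives at most $32^{n/4}$; $d=3$ with at most four distinct $c_i$ gives at most $\approx1080^{n/8}$; and $d\in\{4,5\}$ gives at most $\approx5.6^{n/2}$. Pulling the structure back through $|F_i\setminus B_i|<3n(\log n)^{-1/27}$ yields the claimed $f$. In the unique-largest-set case the optimisation is vacuous: there is only one largest sum-free set $B$, so every $B_i=B$, and then $A\subseteq B\cup\bigcup_i(F_i\setminus B)$ gives $|A\setminus B|<15n(\log n)^{-1/27}$.

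I expect the main difficulty to be that the ``soft'' information — $|F_i|\le\mu(G)+o(n)$ and each $F_i$ almost sum-free — does \emph{not} by itself pin the tuple down: the relaxed optimum of $\prod_x d(x)$ in which every element of $A$ lies in exactly three of the $F_i$'s already exceeds $6^{n/2}$. One therefore has to inject the coset rigidity of largest sum-free sets (the passage to $B_i=\phi_i^{-1}(1)$) and only afterwards run the $\mathbb F_2$-linear-algebra optimisation above; bootstrapping the closeness from the constant fraction $\gamma n$ down to $n(\log n)^{-1/27}$, via the Schur-triple supersaturation together with the quantitative stability of that optimisation, is the most delicate step, and the enumeration of the $d=3$ and $d=4$ cases, while routine, is the longest.
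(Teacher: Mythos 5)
Your overall strategy is the one the paper actually uses: pass from $\kappa_5(A)$ to a single substantial tuple $(F_1,\dots,F_5)$ via the container family, show each $F_i$ is close to a largest sum-free set, and then solve a finite optimisation over tuples of largest sum-free sets. Your treatment of the finite optimisation is a dual reformulation of the paper's Corollary~\ref{remark:reduction} and Lemma~\ref{lem:OptStructure}: you phrase everything in terms of the coordinate vectors $c_i\in\mathbb F_2^d$ of the quotient homomorphisms $\phi_i\colon G\to\mathbb Z_2$ rather than the atoms $\cB(\veceps)$, and the parameter $d=\dim_{\mathbb F_2}\mathrm{span}\{c_1,\dots,c_5\}$ is exactly the paper's parameter $t$ (the largest independent sub-tuple of $(B_1,\dots,B_5)$); the enumeration of patterns $(a_2,\dots)$, $(b_2,\dots)$, $(c_2,\dots)$ in the paper corresponds to your case analysis on $d$ and the multiset $\{c_1,\dots,c_5\}$, and your claimed optima $(1,2,2)$ over $d=2$ and the ``$5$ distinct, two complementary independent'' configuration over $d=3$ agree with the paper's $(0,2,1,0)$ and $(2,4,1,0)$. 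I checked the computations and they are consistent.

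However, the step where you conclude that each $F_i$ is close to a largest sum-free set has a genuine gap in the unique-largest-sum-free-set case. Your only mechanism for that conclusion is the crude bound $\prod_x d(x)\le 3^{\frac13\sum_i|F_i|}$ together with $|F_i|\le\mu(G)+o(n)$, from which you assert $|F_i|\ge\mu(G)-\gamma n$ for all $i$ ``where $\gamma$ is a small absolute constant''. This is true when $G$ has at least two largest sum-free sets, because the threshold $5.9^{n/2}$ gives roughly $|F_i|>0.424n>\tfrac25 n$, which is enough to invoke the removal lemma and Lemma~\ref{lem:3over8}. But in the unique case the threshold is only $4.99^{n/2}\approx 2.234^n$, while the crude bound allows $3^{(2n+s)/3}$ with $s=\min_i|F_i|$, and solving $3^{(2n+s)/3}>4.99^{n/2}$ gives only $s>0.198n$, far below the $\tfrac25 n$ cutoff required by Lemma~\ref{lem:3over8}. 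So you cannot conclude that every $F_i$ lies close to $B$ from the crude bound alone, and the subsequent sentence (``In the unique-largest-set case the optimisation is vacuous \dots $A\subseteq B\cup\bigcup_i(F_i\setminus B)$ gives $|A\setminus B|<15n(\log n)^{-1/27}$'') quietly assumes exactly what is missing. The paper handles this with the separate, more delicate argument in Lemma~\ref{lem:alllargest} for the unique case (Claim~\ref{claim:uniqueB} plus two further bounds built around the tuples $(B,B,F_3^1,F_4^1,F_5^1)$ and $(B,B,B,F_4^1,F_5^1)$, including the observation that a near-sum-free $F_i$ of size between $\tfrac38 n$ and $\tfrac25 n$ splits evenly as $|F_i\cap B|\approx|F_i\setminus B|\approx n/5$ via Lemma~\ref{lem:3over8}). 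This cascading, ``once $k$ of them are good, the remaining ones are forced'' argument is the content you would need to supply; the crude bound does not do it.

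Two smaller remarks. First, your ``sharpening'' step (counting Schur triples through an $x\in F_i\setminus B_i$) is unnecessary once you have $|F_i|>\tfrac25n$: the removal lemma applied with $\eps=3(\log n)^{-1/27}$ already gives $|F_i\setminus S_i|\le 3n(\log n)^{-1/27}$ for a sum-free $S_i\subset F_i$, and then $S_i\subset B_i$ by Lemma~\ref{lem:3over8}; so the target error bound is obtained directly, without bootstrapping. Second, your numerical estimate ``$d\in\{4,5\}$ gives at most $\approx 5.6^{n/2}$'' is a bit low: $d=4$ with $c_5=e_1+e_2+e_3+e_4$ gives $\prod_yw(y)^{n/16}=2^{5n/4}\approx 2.378^n$, which exceeds $5.6^{n/2}\approx 2.366^n$; but it is still below the actual threshold $5.89^{n/2}\approx 2.427^n$, so the conclusion is unaffected.
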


 Our approach to Theorem~\ref{thm:stability4} and Theorem~\ref{thm:stability5}  is to reduce the problems we want to solve  
 for general abelian even order groups to a related problem in $\mathbb F_2^t$ for some $t\leq r\leq 5$.
This reduction and the proof of Theorem~\ref{thm:stability23}  will require further group theoretic facts which we introduce in the next section.

\section{Group theoretic facts}
\label{sec:groups}
The size and the structure of largest sum-free sets in type I abelian groups 
were determined by Diananda and Yap as follows.
\begin{theorem}[Diananda and Yap~\cite{DianandaYap}]\label{lem:mu}
Let $G$ be a type I($q$) group of order~$n$.
Then the size of the largest sum-free set $\mu(G)$ satisfies 
$\mu(G) = \left(\frac{1}{3} + \frac{1}{3q} \right) n$. 
Moreover, if $B$ is a largest sum-free set in $G$, 
then $B$ is a union of cosets of some subgroup $H$
of order $n/q$ of $G$, $B/H$ is in arithmetic progression and
$B \cup (B+B)=G$.
\end{theorem}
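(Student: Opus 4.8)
The plan is to prove the value $\mu(G)=\big(\tfrac13+\tfrac1{3q}\big)n$ together with the structural description in one induction on $|G|$, using Kneser's addition theorem and Vosper's theorem as the only external inputs, and carrying along the easy auxiliary bound $\mu(G')\le |G'|/3$ for type~II and type~III groups~$G'$ so that the induction is self-contained. For the lower bound I would exhibit a sum-free set of the right size: since $G$ is abelian and $q\mid n$, the classification of finite abelian groups provides a surjection $\pi\colon G\to\mathbb{Z}_q$ with $|\ker\pi|=n/q$, and writing $q=3m+2$ the ``middle third'' $I=\{m+1,\dots,2m+1\}\subseteq\mathbb{Z}_q$ is sum-free of size $(q+1)/3$ (a short computation gives $I+I=\{0,\dots,m\}\cup\{2m+2,\dots,3m+1\}$ in $\mathbb{Z}_q$, so $I\cap(I+I)=\emptyset$ and $|I|+|I+I|=q$). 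Then $B:=\pi^{-1}(I)$ is sum-free in $G$, since a Schur triple in $B$ would project to one in $I$, and $|B|=\tfrac nq\cdot\tfrac{q+1}3=\big(\tfrac13+\tfrac1{3q}\big)n$; moreover this $B$ already displays the claimed shape, being a union of cosets of $\ker\pi$ with $B/\ker\pi=I$ an arithmetic progression and $B\cup(B+B)=\pi^{-1}(I\cup(I+I))=G$.

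For the upper bound, let $B\subseteq G$ be sum-free, so $B\cap(B+B)=\emptyset$ and $|B|+|B+B|\le n$; set $K=\mathrm{Stab}(B+B)$. Kneser's theorem gives $|B+B|\ge 2|B+K|-|K|\ge 2|B|-|K|$, hence $3|B|\le n+|K|$. If $K=\{0\}$ this yields $|B|\le (n+1)/3$, which is at most $\big(\tfrac13+\tfrac1{3q}\big)n$, with equality only if $n=q$, i.e. $G=\mathbb{Z}_q$; in that base case equality also forces $|B+B|=2|B|-1$, so Vosper's theorem (once $q\ge5$; $q=2$ is immediate) identifies $B$ as an arithmetic progression, and $|B|+|B+B|=q$ gives $B\cup(B+B)=G$. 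The case $K=G$ is impossible, as it would force $B+B=G$, contradicting $B\cap(B+B)=\emptyset$ for nonempty $B$; so suppose $\{0\}\ne K\ne G$. The key observation is that the image $\bar B$ of $B$ in $\bar G:=G/K$ is again sum-free: a relation $\bar a+\bar b=\bar c$ with $a,b,c\in B$ would place $a+b$ in the $K$-periodic set $B+B$ and simultaneously in the coset $c+K$, which is disjoint from $B+B$ because $c\notin B+B$. Hence, by induction, $|B|\le |\bar B|\cdot|K|\le \mu(\bar G)\cdot|K|$; and since every prime dividing $|\bar G|$ divides $n$, the least prime $\equiv2\pmod3$ dividing $|\bar G|$, if any, is $\ge q$, so $\mu(\bar G)\le\big(\tfrac13+\tfrac1{3q}\big)|\bar G|$ irrespective of the type of $\bar G$, and therefore $|B|\le\big(\tfrac13+\tfrac1{3q}\big)n$.

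It remains to analyse equality when $K$ is nontrivial. If $|B|=\big(\tfrac13+\tfrac1{3q}\big)n$ then the chain $|B|\le|\bar B||K|\le\mu(\bar G)|K|$ is tight throughout: the first equality forces each coset meeting $B$ to lie entirely in $B$, i.e. $B=\pi^{-1}(\bar B)$ for the quotient map $\pi\colon G\to\bar G$; the second forces $\bar G$ to be of type~I($q$) with $\bar B$ a largest sum-free set of $\bar G$. By the inductive structure statement, $\bar B$ is a union of cosets of some $\bar H\le\bar G$ with $|\bar H|=|\bar G|/q$, $\bar B/\bar H$ is an arithmetic progression, and $\bar B\cup(\bar B+\bar B)=\bar G$. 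Pulling back, $H:=\pi^{-1}(\bar H)$ has order $|K|\cdot|\bar G|/q=n/q$, $B$ is a union of $H$-cosets, $B/H\cong\bar B/\bar H$ is an arithmetic progression, and, since $B+B$ is $K$-periodic and hence equal to $\pi^{-1}(\bar B+\bar B)$, we get $B\cup(B+B)=\pi^{-1}(\bar B\cup(\bar B+\bar B))=G$. I expect the main obstacle to be exactly this equality bookkeeping---extracting $K$-periodicity and the correct index-$q$ subgroup from the tight Kneser-and-induction chain, and closing the base case $G=\mathbb{Z}_q$ through Vosper's theorem---while the descent itself via Kneser's theorem is routine.
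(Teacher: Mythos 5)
The paper does not prove this statement; it is imported verbatim with a citation to Diananda and Yap, so there is no in-paper argument to compare against. Your proof is correct, and it is in fact the classical one: Kneser's theorem applied to $K=\mathrm{Stab}(B+B)$, descent to $G/K$ when $K$ is a nontrivial proper subgroup, and Vosper's theorem at the cyclic base case $G=\mathbb{Z}_q$ to extract the arithmetic-progression structure. The lower-bound construction $\pi^{-1}(\{m+1,\dots,2m+1\})$, the observation that $\bar B$ stays sum-free in $G/K$ because $B+B$ is $K$-periodic, and the equality bookkeeping ($B=\pi^{-1}(\bar B)$, $\bar G$ forced to be type~I($q$), pullback of $\bar H$) are all sound. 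The one item you flag as ``carried along'' but do not write out, the auxiliary bound $\mu(G')\le |G'|/3$ for groups without a prime factor $\equiv 2\pmod 3$, does close under the same induction, but it is worth making the arithmetic explicit: with $K'=\{0\}$ Cauchy--Davenport gives $|B'|\le (|G'|+1)/3$, and since $|G'|$ is then a product of $3$ and primes $\equiv 1\pmod 3$ one has $|G'|\not\equiv 2\pmod 3$, so $(|G'|+1)/3$ is never an integer and the bound drops to $|G'|/3$; the case of nontrivial $K'$ reduces to a smaller such group exactly as in your main descent.
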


The following results   are due to Green and Ruzsa~\cite{GreenRuzsa}.
\begin{lemma}[Proposition~2.2~in~\cite{GreenRuzsa}] \label{lem:supersaturation}
If $G$ is abelian and $A \subset G$ contains at most $\delta n^2$  distinct Schur triples, 
then $|A| \leq \mu(G) + 2^{20} \delta^{1/5} n$.
\end{lemma}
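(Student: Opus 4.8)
The plan is to prove this by Fourier analysis on the dual group $\widehat G$; a black-box arithmetic removal lemma would only give a tower-type dependence on~$\delta$, whereas the claimed bound is polynomial. Note first that the statement is vacuous unless $\delta$ is small: if $\delta\ge 2^{-100}$ then $2^{20}\delta^{1/5}\ge 1$ and $|A|\le n$ already suffices, so assume $\delta<2^{-100}$ and set $\alpha=|A|/n$. Expanding $1_A(a+b)$ by Fourier inversion and summing over $a,b\in A$ yields, for the number $T(A)$ of triples $(a,b,c)\in A^{3}$ with $a+b=c$ (which we may identify with the quantity in the hypothesis up to the $O(n)$ degenerate triples),
\[T(A)=n^{2}\sum_{\gamma\in\widehat G}\widehat{1_A}(\gamma)\,\overline{\widehat{1_A}(\gamma)}^{2}=\frac{|A|^{3}}{n}+n^{2}\sum_{\gamma\neq 1}|\widehat{1_A}(\gamma)|^{2}\operatorname{Re}\widehat{1_A}(\gamma),\]
where $\widehat{1_A}(\gamma)=n^{-1}\sum_{x}1_A(x)\overline{\gamma(x)}$. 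Setting $\theta=\max_{\gamma\neq 1}|\widehat{1_A}(\gamma)|$ and using Parseval ($\sum_{\gamma\neq 1}|\widehat{1_A}(\gamma)|^{2}=\alpha-\alpha^{2}\le\alpha$), the hypothesis $T(A)\le\delta n^{2}$ forces $\alpha^{3}\le\delta+\theta\alpha+o(1)$. If $\theta\le\tfrac1{1000}$ this is impossible once $\alpha\ge 1/6$ and $n$ is large, so in that case $\alpha<1/6\le\mu(G)/n$ (recall $\mu(G)\ge n/6$ for every abelian group), hence $|A|<\mu(G)$ and we are done. All the content therefore lies in the structured regime $\theta>\tfrac1{1000}$.

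So suppose there is a nontrivial character $\gamma_{0}$ of order $d\ge 2$ with $|\widehat{1_A}(\gamma_{0})|>\tfrac1{1000}$. Put $H=\ker\gamma_{0}$, an index-$d$ subgroup, and pass to $\overline G=G/H\cong\mathbb Z/d$ with the density profile $f\colon\overline G\to[0,1]$, $f(i)=|A\cap(H+h_{i})|/|H|$; the large Fourier coefficient means $\operatorname{Var}(f)\ge(\tfrac1{1000})^{2}$, so $A$ is genuinely unbalanced across the cosets of~$H$, while $\tfrac1d\sum_{i}f(i)=\alpha$. Conditioning each triple $a+b=c$ in $A$ on the $H$-cosets of $a,b,c$, and using that in any abelian group of order $|H|$ the number of $(x,y)\in X\times Y$ with $x+y\in Z$ is at least $|Z|\,(|X|+|Y|-|H|)_{+}$, one obtains
\[T(A)\ \ge\ |H|^{2}\sum_{i,j\in\mathbb Z/d}f(i+j)\bigl(f(i)+f(j)-1\bigr)_{+},\]
a fractional supersaturation inequality in $\mathbb Z/d$ (its right-hand side equals the number of triples of $S$ when $f=1_S$). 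From it, the ``heavy'' cosets ($f(i)>\tfrac12+O(\delta^{c})$) must form a sum-free set of $\mathbb Z/d$, since otherwise $S+S$ would meet $\mathrm{supp}(f)$ and create $\gg\delta d^{2}$ triples; the cosets of density near $\tfrac12$ are controlled by combining this with sumset-growth bounds (Cauchy--Davenport/Kneser) on $A_i+A_i$, which is where the explicit structure of large sum-free sets (Theorem~\ref{lem:mu} and its type~II/III analogues) enters. I would package all of this as an induction on $|G|$, applying a fractional version of the lemma itself in the strictly smaller group $\mathbb Z/d$ and using $\mu(G)\ge|H|\,\mu(\mathbb Z/d)$ (pull back an extremal set along $G\to\mathbb Z/d$) to conclude $|A|=|H|\cdot d\alpha\le\mu(G)+2^{20}\delta^{1/5}n$. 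The induction bottoms out when $G$ is cyclic and the top Fourier mass sits on a primitive character; there one argues directly inside $\mathbb Z/n$ that $A$ must be close to an interval and that any interval not essentially a dilate of the middle-third pattern carries $\Omega(n^{2})$ triples.

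The main obstacle is the structured regime, and within it two points. First, the analysis of cosets of density near $\tfrac12$ and the cyclic base case both require the full strength of the classification of large sum-free sets, so the group-theoretic facts of Section~\ref{sec:groups} are genuinely used here. Second, and more delicate, is squeezing out the \emph{polynomial} exponent $\delta^{1/5}$: the ``heavy-coset'' thresholds have to be tuned so that the cubic loss in the Fourier inequality, the quadratic gap between $\operatorname{Var}(f)$ and the heavy mass, and the supersaturation gain in $\mathbb Z/d$ all balance, while the accumulated error stays linear in~$n$ with the explicit constant. I expect this quantitative bookkeeping, rather than any single conceptual step, to be the bulk of the work.
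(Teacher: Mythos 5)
This lemma is imported verbatim from Green and Ruzsa (their Proposition~2.2); the paper offers no proof of its own, so there is no internal argument to compare against. Assessed on its own terms, your proposal is a research outline rather than a proof. The non-structured regime $\theta\le\tfrac1{1000}$ is fine but contains no content, as you say. The structured regime, where all the content lies, is a chain of announced steps whose hard parts are missing. Concretely: (i) the deduction that the heavy cosets must form a sum-free set ``since otherwise $S+S$ would meet $\mathrm{supp}(f)$ and create $\gg\delta d^2$ triples'' does not yield a contradiction as written — the supersaturation inequality you set up produces a contribution proportional to $f(i+j)\bigl(f(i)+f(j)-1\bigr)_{+}$, which can be arbitrarily small for cosets of density just above $\tfrac12$, and nothing done here forces that contribution to exceed the budget $\delta n^2$; the Cauchy--Davenport/Kneser step meant to handle exactly those near-$\tfrac12$ cosets is only gestured at; (ii) the ``fractional version of the lemma itself'' that drives the induction on $|G|$ is never formulated, let alone proven — fractional supersaturation for sum-free density profiles on $\mathbb Z/d$, with extremal profiles understood well enough to recurse, is itself a substantial theorem and cannot be invoked as a black box; (iii) the cyclic base case (``$A$ must be close to an interval, and any interval not essentially a dilate of the middle-third pattern carries $\Omega(n^2)$ triples'') is asserted with no argument. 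You also explicitly defer the quantitative bookkeeping needed to land on the exponent $\delta^{1/5}$ and the constant $2^{20}$, calling it ``the bulk of the work,'' which is an admission that the claimed bound has not actually been obtained.

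Beyond the gaps, the method you propose is genuinely different in spirit from Green and Ruzsa's: their argument is combinatorial, built on Kneser's theorem and a direct count, not a Fourier density increment, and that is precisely why it yields polynomial bounds without any spectral machinery. It is worth noticing that the paper's own Lemma~\ref{lem:removal} (Green--Ruzsa's Lemma~4.2) is not a black-box arithmetic removal lemma of tower type but a specialized polynomial removal statement for sums, proved by exactly the Kneser-style structure theory your structured case would eventually have to reinvent. If the goal is a complete, self-contained proof of the supersaturation bound, the Kneser route is the efficient one; the Fourier route as sketched would need to do the same structural work and then additional variance-to-structure bookkeeping on top of it.
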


\begin{lemma}[Lemma~4.2~in~\cite{GreenRuzsa}]\label{lem:removal}
Let $\epsilon > 0$ and let  $G$ be an abelian group. If $A \subseteq G$ has size at least $n/3 + \epsilon n$
and  contains at most $\epsilon^3 n^2 /27$ distinct sums, then there is a sum-free set $S \subseteq A$ 
such that $|S| \geq |A| - \epsilon n$.
\end{lemma}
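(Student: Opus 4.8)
The plan is to squeeze strong additive structure out of the scarcity of Schur triples and then finish in a model of bounded complexity. Concretely, set $\delta=\epsilon^3/27$, so that $A$ has at most $\delta n^2$ solutions to $a+b=c$, and write $f=\mathbf 1_A$. I will argue that $A$ must lie within $\epsilon n$ elements of a genuinely sum-free set; deleting the difference then yields the required $S$. The two tools are Fourier analysis on $G$ and Kneser's theorem (plus, only for bookkeeping, the supersaturation bound of Lemma~\ref{lem:supersaturation}). A harmless preliminary move, which I would make first, is to spend a small constant fraction of the budget deleting the few elements of $A$ that lie in an unusually large number of Schur triples (bounded by Markov's inequality from the quadratic bound), so that afterwards every element of $A$ lies in few triples; this is convenient but not essential.

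The key extraction step is Fourier-analytic. The number of ordered triples $(a,b,c)\in A^3$ with $a+b=c$ equals $\tfrac1n\sum_{\gamma\in\widehat G}\widehat f(\gamma)^2\,\widehat f(-\gamma)$, and the frequency $\gamma=0$ alone contributes $|A|^3/n$, which exceeds $n^2/27+\epsilon n^2/3$ since $|A|\ge n/3+\epsilon n$. As the whole sum is at most $\delta n^2<n^2/27$, the tail $\sum_{\gamma\neq 0}\widehat f(\gamma)^2\widehat f(-\gamma)$ is negative of modulus $\Omega(\epsilon n^3)$; combined with the crude inequality $\bigl|\sum_{\gamma\neq 0}\widehat f(\gamma)^2\widehat f(-\gamma)\bigr|\le\bigl(\max_{\gamma\neq 0}|\widehat f(\gamma)|\bigr)\sum_{\gamma}|\widehat f(\gamma)|^2=\bigl(\max_{\gamma\neq 0}|\widehat f(\gamma)|\bigr)\,n|A|$ this produces a nontrivial character $\gamma_0$ with $|\widehat f(\gamma_0)|=\Omega(\epsilon n)$. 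A linear bias of this size means $A$ is very unevenly spread over the cosets of $\ker\gamma_0$: writing $d$ for the order of $\gamma_0$ and $\ell\colon G\to\mathbb{Z}_d$ for the surjection with kernel $\ker\gamma_0$, the vector $\bigl(|A\cap\ell^{-1}(j)|\bigr)_{j\in\mathbb{Z}_d}$ has a large Fourier coefficient, hence $A$ concentrates on the $\ell$-preimage of an arc of $\mathbb{Z}_d$.

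To upgrade one large frequency to honest global structure I would iterate a density-increment argument of Roth type (equivalently, invoke Green's arithmetic regularity lemma for abelian groups~\cite{GreenRL}): pass to the piece of $G$ on which $A$ is substantially denser and recurse, terminating in a Bohr set of bounded rank modulo which $A$ is, up to $O(\epsilon n)$ elements, sum-free. When instead the sumset $A+A$ already has a large stabiliser $H$, this iteration collapses to a single application of Kneser's theorem: $A$ is then $O(\epsilon n)$-close to a union of cosets of $H$, and one passes to the quotient $G/H$, whose order is bounded in terms of $\epsilon$. Either way the problem is reduced to finding a large sum-free subset of a dense subset of a structure of bounded complexity — a bounded quotient, a bounded-rank Bohr set, or (in the cyclic prime case) a dilated interval of $\mathbb{Z}_p$. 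There the matter is essentially finite: any Schur triple surviving in the reduced configuration would lift back to $\gg\delta n^2$ Schur triples of $A$, contradicting the hypothesis, so the reduced configuration is itself sum-free; pulling it back and discarding the $O(\epsilon n)$ exceptional elements produces a sum-free $S\subseteq A$ with $|S|\ge|A|-\epsilon n$ once the suppressed absolute constants are tracked.

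The main obstacle is exactly the middle step: converting a single large Fourier coefficient into usable structure while keeping the amount of $A$ shed at each stage linear in $\epsilon n$, so that the cumulative loss stays below $\epsilon n$ and the "lifting" inequality at the end indeed beats $\delta n^2=\epsilon^3 n^2/27$. This is also where the generous gap between hypothesis (quadratically many triples) and conclusion (only $\epsilon n$ deletions) is consumed — the argument is deliberately wasteful. By contrast, the two flanking pieces — the Fourier identity and bias extraction of the second paragraph, and the finite verification that one surviving Schur triple inflates to more than $\delta n^2$ triples of $A$ — are routine once the set-up is in place.
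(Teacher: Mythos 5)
This lemma is quoted verbatim from Green and Ruzsa (Lemma~4.2 of~\cite{GreenRuzsa}); the paper offers no proof of its own, so your attempt can only be judged against the statement and against the (elementary, Kneser-based) argument in~\cite{GreenRuzsa}. Your Fourier extraction step is fine — in fact it gives more than you claim: with $T\le\epsilon^3n^2/27$ and $|A|\ge n/3+\epsilon n$ one gets $\bigl|\sum_{\gamma\neq 0}\widehat f(\gamma)^2\widehat f(-\gamma)\bigr|\ge |A|^3-nT\ge n^3/27$, so some nontrivial $\gamma_0$ has $|\widehat f(\gamma_0)|\ge n^2/(27|A|)\ge n/27$, a coefficient of size $\Omega(n)$, not merely $\Omega(\epsilon n)$.

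The genuine gap is the entire middle step, and it is not a cosmetic one. First, a nontrivial Fourier coefficient whose relative size is only about $1/9$ of $\widehat f(0)$ does \emph{not} force $A$ to concentrate on the $\ell$-preimage of an arc: a density profile on cosets of $\ker\gamma_0$ of the form $\tfrac{1}{3}+c\cos(2\pi j/d)$ already produces such a coefficient while spreading $A$ over all cosets, so ``hence $A$ concentrates on an arc'' is unjustified. Second, the two tools you then propose to fall back on — a Roth-type density increment or Green's arithmetic regularity lemma — have quantitative losses (exponential, respectively tower-type, in the increment parameter) that cannot be ``deliberately wasteful'' and still land inside the polynomial relation $\delta=\epsilon^3/27$ the lemma demands; the stated gap between hypothesis and conclusion is only cubic, not generous. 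Third, each increment step costs you a positive fraction of the $\epsilon n$ deletion budget, so after more than $O(1)$ iterations the cumulative loss exceeds $\epsilon n$; you acknowledge this accounting as ``the main obstacle'' but never resolve it. Finally, the alternative branch via Kneser requires $|A+A|<2|A|$ so that the stabiliser of $A+A$ is nontrivial, and nothing you have derived forces $|A+A|$ to be that small. So the proposal is a plausible-looking roadmap, but the step that actually carries the quantitative content of the lemma is missing, and the machinery invoked to fill it cannot, in principle, deliver the $\epsilon^3$ exponent.
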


\begin{lemma}[Lemma~5.6~in~\cite{GreenRuzsa}]\label{prop:re1}
Let~$G$ be of type I($q$).
If $S\subset G$ is a sum-free set  and $|S| > \left(\frac{1}{3} + \frac{1}{3(q+1)}\right)n$,
then $S \subset B$ for some largest sum-free set $B \subset G$.
\end{lemma}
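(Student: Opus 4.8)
The plan is to prove the statement by induction on $n=|G|$. The opening move is to exploit that $|S|>n/3$ together with sum-freeness: since $S\cap(S+S)=\emptyset$ we have $|S+S|\le n-|S|<2|S|$, so Kneser's theorem applies to $S+S$ in a non-degenerate way. Let $K$ denote the stabilizer of $S+S$; Kneser gives $|S+S|\ge 2|S+K|-|K|\ge 2|S|-|K|$. The whole argument then splits according to whether $K$ is trivial.

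If $K=\{0\}$, Kneser reads $|S+S|\ge 2|S|-1$, so $2|S|-1\le n-|S|$, i.e.\ $|S|\le (n+1)/3$; comparing with the hypothesis $|S|>\bigl(\tfrac13+\tfrac1{3(q+1)}\bigr)n$ forces $\tfrac n{q+1}<1$, and since $q\mid n$ this means $n=q$, so $G\cong\mathbb Z_q$. At this point I would just count: the hypothesis becomes $|S|>\tfrac q3+\tfrac q{3(q+1)}$, the integer $(q+1)/3=\mu(\mathbb Z_q)$ exceeds this quantity by exactly $\tfrac1{3(q+1)}<1$, so the integer $|S|$ is squeezed to equal $(q+1)/3=\mu(\mathbb Z_q)$, making $S$ itself a largest sum-free set. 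It is worth noting that $\tfrac1{3(q+1)}$ is precisely the largest constant for which this forcing works, which is why the hypothesis has exactly that shape.

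If $K\ne\{0\}$, I would pass to the quotient $\bar G=G/K$ with projection $\pi$ and put $\bar S=\pi(S)$. Because $S+S$ is $K$-periodic, one checks that $\bar S\cap(\bar S+\bar S)=\emptyset$, so $\bar S$ is sum-free in $\bar G$, and its density is not worse than that of $S$: $|\bar S|/|\bar G|=|S+K|/n\ge|S|/n>\tfrac13+\tfrac1{3(q+1)}$. The crucial point is that $\bar G$ is again of type I($q$): a sum-free set of density exceeding $1/3$ rules out types II and III via the formula for $\mu(G)$, so $\bar G$ is type I($p$) for some prime $p\equiv 2\pmod 3$; then $|\bar S|\le\mu(\bar G)=\bigl(\tfrac13+\tfrac1{3p}\bigr)|\bar G|$ forces $p\le q$, while $p\mid|\bar G|$ and $|\bar G|\mid n$ force $p\ge q$ by minimality of $q$, so $p=q$. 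Since $|\bar G|<n$, the induction hypothesis supplies a largest sum-free set $\bar B\subseteq\bar G$ with $\bar S\subseteq\bar B$, and I would finish by pulling back: $B:=\pi^{-1}(\bar B)$ contains $S$, is sum-free (preimages of sum-free sets under homomorphisms are sum-free), and has size $|K|\cdot\mu(\bar G)=\bigl(\tfrac13+\tfrac1{3q}\bigr)n=\mu(G)$, hence is a largest sum-free set of $G$.

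The routine-looking steps I would be most careful about are the verification that $\bar S$ is sum-free (including the diagonal case $2\bar a=\bar c$, which uses the $K$-periodicity of $S+S$ against $S\cap(S+S)=\emptyset$) and the size computation for $B$. The genuine obstacle, though, is the claim that the quotient $\bar G$ remains of type I with the same parameter $q$: one has to make sure that factoring out $K$ does not lose the prime $q$, and this is exactly where it matters that the density of $\bar S$ is at least that of $S$, since $|S+K|\ge|S|$ --- were the density to drop below $1/3$ the type could change and the induction would break. Everything else (Kneser's theorem, the preimage manipulations, and termination of the recursion since $|\bar G|<|G|$ whenever $K$ is nontrivial) is standard.
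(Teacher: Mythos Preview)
The paper does not supply its own proof of this lemma; it is quoted as Lemma~5.6 of Green--Ruzsa and used as a black box. Your argument is correct and is essentially the Green--Ruzsa proof: apply Kneser's theorem to $S+S$, dispose of the trivial-stabilizer case by forcing $G\cong\mathbb Z_q$ and $|S|=\mu(\mathbb Z_q)$, and otherwise descend to the quotient and induct. The step that carries the weight---showing that $\bar G=G/K$ is again of type~I($q$)---is handled correctly: the density bound $|\bar S|/|\bar G|\ge|S|/n>1/3$ excludes types~II and~III via the $\mu$-formulae, and then comparing $|\bar S|\le\mu(\bar G)$ with the hypothesis pins down the parameter as exactly $q$, using that $q$ is the \emph{smallest} prime $\equiv 2\pmod 3$ dividing $n$. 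Your check that $\bar S$ is sum-free (via the $K$-periodicity of $S+S$) and the size computation $|B|=|K|\cdot\mu(\bar G)=\mu(G)$ are both fine.

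One remark on packaging: the paper already records the Kneser consequence you unwind, namely Lemma~\ref{prop:kneser} (Lemma~5.2 of Green--Ruzsa), which for $r=|S|-n/3>n/(3(q+1))$ directly produces a subgroup $H$ with $|H|>n/(q+1)$ and a sum-free $T\subset G/H$ with $S\subseteq\pi^{-1}(T)$. For $n>q$ (hence $n\ge 2q$) this already gives $|H|\ge 2$, so in the inductive step one could cite that lemma instead of re-running Kneser. Your self-contained treatment is equally valid, and has the virtue that the base case $n=q$ emerges naturally from the $K=\{0\}$ branch rather than needing to be isolated by hand.
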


\begin{lemma}[Lemma~5.2~in~\cite{GreenRuzsa}] \label{prop:kneser}
Suppose that $G$ is abelian, 
$S \subset G$ a sum-free set, $r > 0$ and $|S| \geq n/3 + r$. Then
there is a subgroup $H$ of $G$ with $|H| \geq 3r$, and a sum-free set $T \subset G/H$ such that 
$S \subseteq \pi^{-1}(T)$, where $\pi: G \rightarrow G/H$ is the canonical homomorphism.
\end{lemma}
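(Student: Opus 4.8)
\textbf{Proof plan for Lemma~\ref{prop:kneser}.}
The plan is to run Kneser's theorem on the sumset $S+S$ together with the bound $|S+S|\le n-|S|$ that comes from sum-freeness. First I would record the elementary inequality: since $S$ is sum-free, $S$ and $S+S$ are disjoint subsets of $G$, so $|S+S|\le n-|S|\le 2n/3-r$. Now apply Kneser's theorem to the sumset $S+S$: there is a subgroup $H=\mathrm{Stab}(S+S)=\{g\in G: g+(S+S)=S+S\}$ such that, writing $\pi\colon G\to G/H$ for the quotient map,
\[
|S+S|\ \ge\ |\pi(S)+\pi(S)|\cdot|H|\ \ge\ \bigl(2|\pi(S)|-1\bigr)|H|.
\]
Combining with $|S+S|\le 2n/3-r$ and $|S|\le|\pi(S)|\cdot|H|$ (which holds because $S\subseteq\pi^{-1}(\pi(S))$, as $S+S$, hence $\pi(S)+\pi(S)$, is $H$-periodic and one checks $S$ meets only the fibres over $\pi(S)$) gives a chain of inequalities from which $|H|$ can be extracted.

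Next I would do the arithmetic. From $|S|\ge n/3+r$ and $|S|\le|\pi(S)||H|$ we get $|\pi(S)|\ge(n/3+r)/|H|$. Feeding this into $(2|\pi(S)|-1)|H|\le 2n/3-r$ yields $2|\pi(S)||H|\le 2n/3-r+|H|$, i.e. $2(n/3+r)\le 2n/3-r+|H|$, hence $|H|\ge 3r$, which is exactly the size bound claimed. It remains to produce the sum-free set $T\subset G/H$: I would take $T=\pi(S)$ and check it is sum-free in $G/H$. Indeed if $\pi(s_1)+\pi(s_2)=\pi(s_3)$ with $s_i\in S$ then $s_1+s_2\in s_3+H\subseteq\pi^{-1}(\pi(S))$; but more to the point, $s_1+s_2\in S+S$ which is disjoint from $S$, and since $S+S$ is a union of $H$-cosets while the coset $s_3+H$ contains the point $s_3\in S\subseteq G\setminus(S+S)$, the coset $s_3+H$ is disjoint from $S+S$, contradicting $s_1+s_2\in(s_3+H)\cap(S+S)$. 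Hence $T=\pi(S)$ is sum-free and $S\subseteq\pi^{-1}(T)$, completing the argument.

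The one genuinely delicate point — and the main obstacle — is the degenerate case of Kneser's theorem, namely when $\pi(S)+\pi(S)=\pi(S)+\pi(S)$ has stabiliser already accounted for, or more precisely when the "$-1$" in Kneser's bound is not available because $\pi(S)+\pi(S)$ could in principle be a single coset or behave degenerately for very small $|\pi(S)|$. I would handle this by noting that $|\pi(S)|\ge 2$ always holds here: if $|\pi(S)|=1$ then $S$ lies in a single $H$-coset, forcing $|H|\ge|S|\ge n/3+r$, which already gives $|H|\ge 3r$ (for $r\le n/9$, and for larger $r$ the statement is vacuous or follows trivially since no sum-free set that big exists in the relevant type), so that case is fine; and for $|\pi(S)|\ge2$ Kneser's bound $|\pi(S)+\pi(S)|\ge 2|\pi(S)|-1$ is exactly the statement of Kneser's theorem applied in $G/H$ where the sumset has trivial stabiliser by construction of $H$. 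A secondary technical point to verify carefully is the claim $|S|\le|\pi(S)||H|$, i.e. that $S$ really is contained in $\pi^{-1}(\pi(S))$ with no loss — this is immediate from the definition of $\pi(S)$ as the image, so in fact $S\subseteq\pi^{-1}(\pi(S))$ is a tautology and the bound $|S|\le|\pi(S)||H|$ follows since each fibre has size $|H|$; the content is entirely in the Kneser inequality and the sum-free-disjointness bound.
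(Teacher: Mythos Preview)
The paper does not supply its own proof of this lemma; it is quoted verbatim from Green and Ruzsa~\cite{GreenRuzsa}, so there is no in-paper argument to compare against. Your proposal is the standard Kneser-theorem proof and is essentially the argument Green and Ruzsa give: from sum-freeness one has $|S+S|\le n-|S|$, Kneser with $H=\mathrm{Stab}(S+S)$ yields $|S+S|\ge 2|S+H|-|H|\ge 2|S|-|H|$, and combining gives $|H|\ge 3|S|-n\ge 3r$; then $T=\pi(S)$ is sum-free because $S+S$ is $H$-periodic and disjoint from $S$.

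One remark: your discussion of the ``degenerate case'' $|\pi(S)|=1$ is unnecessary and slightly muddled. Kneser's inequality $|S+S|\ge 2|S+H|-|H|$ holds uniformly with no restriction on $|\pi(S)|$, and since $|S+H|\ge|S|$ is automatic, the chain $2|S|-|H|\le|S+S|\le n-|S|$ gives $|H|\ge 3r$ directly without any case split. The side condition you introduce (``for $r\le n/9$'') is both incorrect as stated and not needed. Otherwise the argument is clean and correct.
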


Further we will need the following result from~\cite{ABMS} (see page 19).
\begin{lemma}\label{lem:proof23exact}
Let $G$ be a type I($q$) group of odd order and let $B\subset G$ be a largest sum-free set.
If $x \in G\setminus B$ , then
\[ \left| \left\{ \{a,b\} \in \binom{B}{2}: x=a+b \right\} \right| \geq \frac{n}{2q} - 1.\]
\end{lemma}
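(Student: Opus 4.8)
The plan is to leverage the structure of largest sum-free sets in type~I($q$) groups provided by Theorem~\ref{lem:mu}: $B$ is a union of cosets of a subgroup $H \le G$ of order $n/q$, and $B \cup (B+B) = G$. Writing $\pi\colon G \to G/H$ for the canonical projection and $\overline{Y} = \pi(Y)$ for $Y \subseteq G$, the idea is that a \emph{single} representation of $\overline{x}$ as a sum of two elements of $\overline{B}$ lifts, via the full $H$-cosets making up $B$, to $|H| = n/q$ element-level representations of $x$ as a sum of two elements of $B$ --- which is already essentially the claimed bound.

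Concretely I would proceed in three steps. First, since $B = \pi^{-1}(\overline{B})$ is $H$-invariant and $x \notin B$, we have $\overline{x} \notin \overline{B}$; moreover, as $x \notin B$, the identity $B \cup (B+B) = G$ gives $x \in B+B$, and projecting gives $\overline{x} = \overline{a} + \overline{b}$ for some $\overline{a}, \overline{b} \in \overline{B}$, so the set $R = \{\, \overline{c} \in \overline{B} : \overline{x} - \overline{c} \in \overline{B} \,\}$ is nonempty. Second, I would count ordered pairs $N = |\{(a,b) \in B \times B : a+b = x\}|$ by fibering over the coset $\pi(a)$ of the first coordinate: if $\pi(a) = \overline{c} \in R$ then $b = x - a$ automatically lies in $\pi^{-1}(\overline{x} - \overline{c}) \subseteq B$, so all $|H|$ choices of $a$ in that coset contribute, whereas cosets with $\overline{c} \in \overline{B} \setminus R$ contribute nothing; hence $N = |H| \cdot |R| \ge |H| = n/q$. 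Third, I would pass from ordered to unordered pairs: a diagonal contribution $(a,a)$ requires $2a = x$ with $a \in B$, and since $|G|$ is odd the doubling map $g \mapsto 2g$ is a bijection on $G$, so there is at most one such $a$; the remaining at least $N-1$ ordered pairs split into unordered ones, giving
\[
\left|\left\{ \{a,b\} \in \binom{B}{2} : x = a+b \right\}\right| \ge \frac{N-1}{2} \ge \frac{n/q - 1}{2} = \frac{n}{2q} - \frac12 > \frac{n}{2q} - 1.
\]

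I do not expect a serious obstacle here. The only point that needs care is that the naive number of representations of $\overline{x}$ over $\overline{B}$ inside $G/H$ can be as small as $1$ --- this occurs at the ``tips'' of the sumset $\overline{B} + \overline{B}$ --- so one must genuinely use that $B$ consists of \emph{full} cosets of the subgroup $H$ of size $n/q$ in order to amplify a single coset-level representation; the arithmetic-progression structure of $\overline{B}$ plays no role. The odd-order hypothesis enters only through the diagonal term and is precisely what bounds that correction by one; in fact the argument yields the slightly stronger lower bound $\tfrac{n}{2q} - \tfrac12$.
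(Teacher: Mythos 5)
Your argument is correct, and in fact establishes the slightly stronger bound $\tfrac n{2q}-\tfrac12$. Note that the paper does not supply its own proof of this lemma but cites it from Alon, Balogh, Morris and Samotij, so there is no in-paper proof to compare against; your derivation directly from Theorem~\ref{lem:mu} (using that $B$ is a union of $H$-cosets with $|H|=n/q$ and $B\cup(B+B)=G$ to produce $n/q$ ordered representations, then invoking odd order to bound the diagonal term by one) is the natural self-contained argument, and all the steps check out.
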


\subsection*{Even order groups}
For even order groups we need further results.  The first one refines and improves Lemma~\ref{prop:re1}  for even order groups as follows.

\begin{lemma}\label{lem:3over8}
Let~$G$ be an abelian group of even order and let
$S\subset G$ be sum-free. If $|S| > \frac{3}{8} n$ and $S$ is~not contained in a largest sum-free set of $G$,
then there exist a subgroup $H$ of $G$ with $|H|=\frac{n}{5}$ 
and a largest sum-free set $T\subset G/H$ such that
$S \subseteq \pi^{-1}(T)$, where $\pi: G \rightarrow G/H$ is the canonical homomorphism.
Moreover, for all largest sum-free sets $B$ of $G$ we have $|\pi^{-1}(T) \cap B|= \frac n5$.

In particular, if $|S| > \frac{2}{5} n$, then $S $ is contained in a largest sum-free set of~$G$.
\end{lemma}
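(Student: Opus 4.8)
The plan is to apply Kneser's-type structure (Lemma~\ref{prop:kneser}) to get a subgroup $H$ capturing $S$, then pin down $|H|$ and the structure of the quotient using the classification of largest sum-free sets in type~I groups (Theorem~\ref{lem:mu}) together with Lemma~\ref{prop:re1}. First I would apply Lemma~\ref{prop:kneser} with $r=|S|-n/3>n/24$: this yields a subgroup $H$ with $|H|\geq 3r > n/8$ and a sum-free set $T\subset G/H$ with $S\subseteq\pi^{-1}(T)$. Since $S$ is not contained in any largest sum-free set of $G$, the set $T$ cannot be ``too small''; more precisely, $|T|\cdot|H| = |\pi^{-1}(T)| \geq |S| > \tfrac38 n$, so $|T| > \tfrac38\,[G:H]$. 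Now $G/H$ is again a finite abelian group, and a sum-free set of relative density exceeding $3/8 > 1/3$ forces $G/H$ to be of type~I (type~II and type~III groups have $\mu < n/3$, resp.\ $= n/3$, so density $>3/8$ is impossible there) — here I would invoke the formulas for $\mu$ from the introduction. Write $|G/H| = n/|H| =: m$, so $m < 8$, i.e.\ $m\in\{2,3,4,5,6,7\}$, and since $G/H$ is type~I($q'$) with $q'\equiv 2\pmod 3$ we get $q'\in\{2,5\}$ and the density bound $|T|/m \leq \mu(G/H)/m = \tfrac13+\tfrac1{3q'} \leq \tfrac13+\tfrac1{9} = \tfrac49$. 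Combined with $|T|/m > 3/8$ this is a genuine constraint: checking the finitely many pairs $(m,q')$ with $m<8$, the only one admitting $|T|$ an integer with $\tfrac38 m < |T| \leq (\tfrac13+\tfrac1{3q'})m$ while $H$ has the right index is $m=5$, $q'=2$...

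— wait, more carefully: for $m=5$, $q'=5$ we'd need $5\mid m$ which holds, and $\mu(G/H)=(\tfrac13+\tfrac1{15})\cdot 5 = 2$, so $|T|\leq 2$, but $\tfrac38\cdot 5 = 1.875$, forcing $|T|=2$; this is exactly the claimed $|T|=\mu(G/H)$, i.e.\ $T$ is a \emph{largest} sum-free set of $G/H=\mathbb{Z}_5$ (or any type~I(5) group of order~5). For other values of $m$ one rules them out: $m=2$ gives $\mu=1$, density $1/2$, but then $\pi^{-1}(T)$ has size $n/2 > 3n/8$ and is sum-free only if it sits in a largest sum-free set — need to show $|H|=n/2$ forces $S$ inside a largest sum-free set, contradiction; similarly $m=3,4,6,7$ are eliminated by the integrality/density pinch or by the fact that $q'\equiv 2\pmod 3$ restricts $q'$. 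So the surviving case is $|H|=n/5$ and $T$ a largest sum-free set of $G/H$, which has order~$5$ and hence $G/H\cong\mathbb{Z}_5$; then $\pi^{-1}(T)$ is a union of two cosets of $H$, so $|\pi^{-1}(T)| = 2n/5$.

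For the ``Moreover'' clause, let $B$ be any largest sum-free set of $G$. By Theorem~\ref{lem:mu}, $G$ being of type~I (it has even order, so type~I(2), but it may also be type~I($q$) for the quotient reasons above — the relevant point is that $|G/H|=5$ forces $5\mid n$, hence $G$ is type~I with $q\leq 5$), $B$ is a union of cosets of some subgroup $H_0$ with $B/H_0$ in arithmetic progression and $B\cup(B+B)=G$. I would argue that $\pi^{-1}(T)$, being a union of two cosets of the index-$5$ subgroup $H$, intersects $B$ in precisely the ``fraction'' dictated by how $B$ distributes over the cosets of $H$: since $\pi(B)$ is a sum-free subset of $\mathbb{Z}_5$ compatible with $B\cup(B+B)=G$, one shows $|\pi(B)\cap T| = 1$ (a largest sum-free set of $\mathbb{Z}_5$ is $\{2,3\}$ up to automorphism, and $\pi(B)$ must be $\{2\}$, $\{3\}$, $\{2,3\}$ or similar; a short case check gives intersection exactly one element's worth), whence $|\pi^{-1}(T)\cap B| = |H| = n/5$. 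Finally, the ``In particular'' statement: if $|S|>\tfrac25 n = \tfrac25 n$, then $|\pi^{-1}(T)|\geq|S| > \tfrac25 n$, contradicting $|\pi^{-1}(T)|=\tfrac25 n$ unless the alternative (that $S$ \emph{is} in a largest sum-free set) holds.

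\textbf{Main obstacle.} The delicate part is the integrality/density elimination of the small indices $m\in\{2,3,4,6,7\}$ and, within the surviving case, proving that $T$ must be a \emph{largest} (not merely large) sum-free set of $G/H$ and that $|H|$ is exactly $n/5$ rather than a larger multiple — this requires using the hypothesis that $S$ fails to lie in a largest sum-free set of $G$ to rule out the ``lift of a largest sum-free set of $G/H$ already being a largest sum-free set of $G$'' scenario (which would happen e.g.\ if $m=2$), i.e.\ one must know that $\pi^{-1}(\text{largest sum-free set of }G/H)$ is a largest sum-free set of $G$ only when $|G/H|=q$ for the minimal $q$. I expect this to hinge on a clean sub-lemma: \emph{if $H\leq G$ and $U\subset G/H$ is sum-free with $|U| = \mu(G/H)/|G/H|\cdot|G/H|$ of maximum density, then $\pi^{-1}(U)$ is sum-free in $G$, and it is a largest sum-free set of $G$ iff $G/H$ is ``type-I-optimal'' for $G$}. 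Extracting and verifying that is the crux; the rest is bookkeeping with the three $\mu$-formulas.
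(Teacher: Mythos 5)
Your starting point (apply Lemma~\ref{prop:kneser} with $r>n/24$ to obtain a subgroup $H$ of index at most $7$ or $8$, and then pin down the index) is the same as the paper's. But two of the steps you flag as ``main obstacle'' are not merely bookkeeping, and as written the proposal has genuine gaps. First, the elimination of even indices $\ell\in\{2,4,6,8\}$ is not closed. The needed observation is short but essential: since $G/H$ has even order, $\mu(G/H)=\ell/2$. If $T$ is a largest sum-free set of $G/H$, then $\pi^{-1}(T)$ is sum-free (preimage of a sum-free set under a homomorphism is sum-free) and has size $(\ell/2)(n/\ell)=n/2=\mu(G)$, i.e.\ it is itself a largest sum-free set of $G$ containing $S$ --- contradicting the hypothesis. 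If $T$ is not largest, then $|T|\leq\ell/2-1$, whence $|\pi^{-1}(T)|\leq n/2-n/\ell\leq 3n/8$, contradicting $|S|>3n/8$. Your proposal gestures at this (``need to show $|H|=n/2$ forces $S$ inside a largest sum-free set'') but leaves it as a conjectural sub-lemma, when in fact it is a one-line consequence of $\mu(G)=n/2$.

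Second, the argument you sketch for the ``Moreover'' clause is incorrect: you assert that $\pi(B)$ is a sum-free subset of $G/H\cong\mathbb{Z}_5$ and then do a case check, but images of sum-free sets under homomorphisms are not sum-free, and here $\pi(B)$ is in fact \emph{all} of $G/H$. (Reason: $B^0=G\setminus B$ is an index-$2$ subgroup by Theorem~\ref{lem:mu}; since $[G:H]=5$ and $[G:B^0]=2$ are coprime, $HB^0=G$, so already $\pi(B^0)=G/H$, and $\pi(B)$ is a coset translate of $\pi(B^0)$, hence also all of $G/H$.) The paper's proof circumvents this by working in $G/(H\cap B^0)\cong G/H\oplus G/B^0\cong\mathbb{Z}_5\oplus\mathbb{Z}_2$, where $B$ corresponds to $\mathbb{Z}_5\oplus\{1\}$ and $\pi^{-1}(T)$ to $\{1,4\}\oplus\mathbb{Z}_2$ or $\{2,3\}\oplus\mathbb{Z}_2$, so the intersection has size $2\cdot(n/10)=n/5$. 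Your remaining steps (ruling out odd indices $3$ and $7$ by density, and the ``In particular'' deduction from $|\pi^{-1}(T)|=2n/5$) are fine.
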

\begin{proof}
Let $H$ be the subgroup of $G$ and $T$ be the sum-free set of $G/H$ 
obtained by the application of Lemma~\ref{prop:kneser} with $r=\frac n{24}$.
Then, $S\subset \pi^{-1}(T)$ and  $|H| = \frac{n}{\ell}$ for some $\ell\in \{2, \ldots, 8\}$.
If $\ell$ is even then $G/H$ has largest sum-free set of size $\ell/2$.
Hence, If $T$ is a largest sum-free set in $G/H$, then $S$ is contained in a largest sum-free set in $G$, a contradiction. 
If, on the other hand, $T$ is not a largest sum-free
set in $G/H$, then  trivially $|\pi^{-1}(T)|\leq \frac n\ell \left(\frac\ell2 -1\right)\leq  \frac{3}{8} n$,
a contradiction to the size of $S$.

Suppose now that $\ell \in \{3,7\}$.
Then $G/H$ is isomorphic to $\mathbb{Z}_3$ or to $\mathbb{Z}_7$.
As the largest sum-free sets in $\mathbb{Z}_3$ and $\mathbb{Z}_7$
have sizes one and two, respectively, we have $|\pi^{-1}(T)| \leq \frac{1}{3} n < \frac{3}{8} n$, which is a contradiction
to the size of $S$. Hence, $\ell=5$ which finishes the proof of the first part of the lemma.

Suppose now that $B$ is a largest sum-free set in $G$. Then  $B^0=G\setminus B$ is an index two subgroup of $G$ due to Theorem~\ref{lem:mu}. 
Since $G/ (H\cap B^0) \simeq G/H \oplus G/B^0 \simeq \mathbb{Z}_5 \oplus \mathbb{Z}_2$, 
the set $B$ must correspond to $\mathbb{Z}_5 \oplus \{1\}$ and 
$\pi^{-1}(T)$ corresponds to $\{1,4\} \oplus \mathbb{Z}_2$ or to 
$\{2,3\} \oplus \mathbb{Z}_2$.
We conclude $|\pi^{-1}(T) \cap B|= \frac n5$.
\end{proof}

\begin{lemma}
\label{lem:matching}
Let $G$ be an abelian group of even order, let
$\cB=(B_1,\dots,B_t)$ be an ordered tuple of largest sum-free sets in $G$ and
$x\in \cB(\veczero_t)$ where $\veczero_t$ is the zero vector of length $t$. Then for every $\veceps\in\{0,1\}^t$ and every 
$b\in\cB(\veceps)$, we have $x-b \in \cB(\veceps)$.
\end{lemma}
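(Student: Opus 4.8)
The plan is to reduce everything to the coset structure of largest sum-free sets in even order groups supplied by Theorem~\ref{lem:mu}. Since $G$ has even order it is of type I($2$), so by Theorem~\ref{lem:mu} each largest sum-free set $B$ is a union of cosets of a subgroup of order $n/2$ with $B$ in arithmetic progression modulo that subgroup; because the unique largest sum-free set of $G/B^0\cong\mathbb{Z}_2$ is the non-identity element, $B$ must in fact be a single coset and $B^0=G\setminus B$ is an index-two subgroup of $G$. I would first record this as the only non-elementary input: for every $i\in[t]$ the complement $H_i:=B_i^0=G\setminus B_i$ is a subgroup of $G$.

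Next I would unwind the hypotheses. The condition $x\in\cB(\veczero_t)$ says exactly that $x\in B_i^0=H_i$ for every $i\in[t]$; the condition $b\in\cB(\veceps)$ says that for each $i$ we have $b\in B_i$ precisely when $\eps_i=1$, equivalently $b\in H_i$ precisely when $\eps_i=0$. Since the desired conclusion $x-b\in\cB(\veceps)$ is, by definition, the statement that $x-b\in B_i$ precisely when $\eps_i=1$, it suffices to prove the equivalence $x-b\in B_i\Longleftrightarrow b\in B_i$ for each fixed $i$.

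That equivalence follows from closure of $H_i$ under subtraction. Fix $i$ and write $H=H_i$. If $b\notin H$ (i.e.\ $b\in B_i$) then $x-b\notin H$: otherwise $b=x-(x-b)$ would lie in $H$, a contradiction; hence $x-b\in G\setminus H=B_i$. Conversely, if $b\in H$ (i.e.\ $b\notin B_i$) then, since $x\in H$, also $x-b\in H$, so $x-b\notin B_i$. Combining the equivalences over all $i\in[t]$ yields $x-b\in\cB(\veceps)$.

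I do not expect a real obstacle: all the substance sits in Theorem~\ref{lem:mu}, namely that the complement of a largest sum-free set in an even order group is a subgroup, and once this is in hand the claim is a one-line computation in each quotient $G/H_i\cong\mathbb{Z}_2$. The only place requiring a moment of care is the passage from ``union of cosets in arithmetic progression'' to ``single coset'', which is where $q=2$ is used.
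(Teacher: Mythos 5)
Your proof is correct and follows essentially the same approach as the paper: both arguments reduce to the single fact (from Theorem~\ref{lem:mu}) that the complement $B_i^0$ of each largest sum-free set is an index-two subgroup, and then observe that since $x\in B_i^0$, the elements $b$ and $x-b$ lie on the same side of $B_i^0$ for every $i$. The paper phrases this as a contradiction while you argue directly coordinate by coordinate, but the content is identical.
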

\begin{proof}
Let $b'=x-b$ and for a contradiction suppose that $b'\in\cB(\veceps')$ for an 
$\veceps'=(\eps_1',\dots,\eps_t')\neq\veceps=(\eps_1,\dots,\eps_t)$.
Then there is an index $i$ such that $\eps_i'\neq\eps_i$, i.e., 
we have that one of the two elements $b$ or $b'$ is contained in $B_i^0$
but not both. 
However, as $x\in B_i^0$, this yields a contradiction to the fact that, due to Theorem~\ref{lem:mu}, 
$B_i^0=G\setminus B_i$ is a subgroup of $G$. The lemma follows.
\end{proof}

Given an even order group $G$ and a tuple of largest sum-free sets. The following lemma
determines the size of the atoms and characterizes all largest sum-free sets contained in this tuple. 
It is a crucial part in our  reduction of the original problem in arbitrary even order groups to a related one in $\mathbb F_2^t$ for some $t\leq r$.

\begin{lemma}
\label{lem:intersectionlarg}
Let $G$ be an abelian group of even order and let $\cB=(B_1, \ldots, B_t)$, $t\geq 2$,
be an independent tuple of largest sum-free sets in $G$.
Further, let~$\veczero_t$ denote the zero vector of length~$t$.
Then the size of each atom of $\cB$ is $n/2^{t}$ and,
$B \subseteq \bigcup_{i\in [t]}B_i$ 
is a largest sum-free set in $G$ if and only if there is a largest sum-free set $S$ of $\mathbb{F}_2^t$ such that
\[ B = \bigcup_{\veceps \in S} \cB(\veceps).\]
\end{lemma}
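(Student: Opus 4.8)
The plan is to prove the two assertions of Lemma~\ref{lem:intersectionlarg} in sequence, using the structural description from Theorem~\ref{lem:mu} (each $B_i^0=G\setminus B_i$ is an index-two subgroup) together with Lemma~\ref{lem:matching}. First I would establish that every atom of an independent tuple $\cB=(B_1,\dots,B_t)$ has size exactly $n/2^t$. The key observation is that the complements $B_i^0$ are subgroups of index $2$, so the map $\pi:G\to\prod_{i\in[t]}G/B_i^0\cong\mathbb F_2^{t}$ (sending $g$ to the tuple of cosets) is a homomorphism whose fibers are exactly the atoms $\cB(\veceps)$ — more precisely, $\cB(\veceps)$ is either empty or a coset of $K:=\bigcap_{i\in[t]}B_i^0$. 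Thus all nonempty atoms have the same size $|K|$, and $|G|=|K|\cdot(\text{number of nonempty atoms})$. It remains to see that $\pi$ is surjective, i.e. that all $2^t$ atoms are nonempty; this is where independence enters. If some atom $\cB(\veceps)$ were empty, the image of $\pi$ would be a proper subgroup of $\mathbb F_2^t$, hence contained in a hyperplane, which would force a linear dependence among the coordinate functions and translate into one $B_i$ (or its complement) being determined by the others — contradicting independence. I would make this precise by checking that the image of $\pi$ is a subgroup of $\mathbb F_2^t$ and that a proper subgroup yields exactly the dependence forbidden by hypothesis. Consequently $|K|=n/2^t$ and every atom has size $n/2^t$.

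Next I would prove the "if" direction of the characterization: if $S\subseteq\mathbb F_2^t$ is a largest sum-free set, then $B:=\bigcup_{\veceps\in S}\cB(\veceps)$ is a largest sum-free set of $G$ contained in $\bigcup_{i\in[t]}B_i$. Containment is immediate since every nonzero $\veceps$ lies in $B_1\cup\dots\cup B_t$ at the level of atoms, and $S$ avoids $\veczero_t$ because a sum-free set cannot contain $0$. For sum-freeness: via the homomorphism $\pi$, a Schur triple $a+b=c$ in $G$ projects to a Schur triple $\pi(a)+\pi(b)=\pi(c)$ in $\mathbb F_2^t$, so if all three of $a,b,c$ lie in $B$ then $\pi(a),\pi(b),\pi(c)\in S$ form a (possibly degenerate) Schur triple in $S$; one has to rule out the degenerate case $\pi(a)=\pi(b)$ giving $\pi(c)=0$, but $0\notin S$, so $\pi(c)=\pi(a)+\pi(b)$ is a genuine Schur triple in $S$, contradicting sum-freeness of $S$. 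For the size, since $\mathbb F_2^t$ is of type~I($2$), Theorem~\ref{lem:mu} (applied to $\mathbb F_2^t$) gives $|S|=\mu(\mathbb F_2^t)=\frac12\cdot 2^t\cdot\frac{2\cdot2}{3\cdot2}$... more simply $|S|=(\frac13+\frac1{6})2^t=2^{t-1}$ only when that is an integer; in general one uses that $B$ is a union of $|S|$ atoms each of size $n/2^t$, so $|B|=|S|\cdot n/2^t$, and one checks this equals $\mu(G)$ by comparing with the known value $\mu(G)=\frac n2$ for even order groups of type I($2$) and $\mu(\mathbb F_2^t)=2^{t-1}$. Hence $|B|=2^{t-1}\cdot n/2^t=n/2=\mu(G)$, so $B$ is a largest sum-free set.

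For the "only if" direction: let $B\subseteq\bigcup_{i\in[t]}B_i$ be a largest sum-free set of $G$. I want to show $B$ is a union of atoms and that the index set $S=\{\veceps:\cB(\veceps)\subseteq B\}$ is sum-free in $\mathbb F_2^t$ of maximum size. The crucial point — that $B$ is \emph{atom-saturated}, i.e. if $B$ meets an atom then it contains that entire atom — is exactly what Lemma~\ref{lem:matching} delivers: pick $x\in\cB(\veczero_t)$ (nonempty by the first part); then for any $b\in\cB(\veceps)\cap B$, if $x-b\notin B$ the sum-free set $B$ would still be fine, but a counting/extremality argument shows that in fact $\cB(\veczero_t)$ can be "folded" onto each nonempty atom, and since $|B|=\mu(G)=n/2$ is exactly half the group while the atoms partition $G$ into blocks of size $n/2^t$, $B$ must be a union of exactly $2^{t-1}$ atoms. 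To nail the atom-saturation I would argue: by Lemma~\ref{lem:matching}, $x-b\in\cB(\veceps)$ for all $b\in\cB(\veceps)$, so $x-\cB(\veceps)=\cB(\veceps)$; if $b\in B$ but some $b'\in\cB(\veceps)\setminus B$, then... here one invokes that $B^0=G\setminus B$ is a subgroup (Theorem~\ref{lem:mu}) and $x\in\cB(\veczero_t)\subseteq B$ wait — one must check whether $\cB(\veczero_t)\subseteq B$ or $\subseteq G\setminus B$. Since $B^0$ is an index-two subgroup and $\veczero_t$ corresponds to lying in all $B_i^0$, the atom $\cB(\veczero_t)$ lies in $B^0=G\setminus B$ iff $\veczero\in$ the hyperplane defining $B$; as every largest sum-free set of even-order $G$ has complement a subgroup, and $B\subseteq B_1\cup\dots\cup B_t$, one gets $\cB(\veczero_t)\cap B=\emptyset$, i.e. $\cB(\veczero_t)\subseteq G\setminus B$. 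Then for $b\in\cB(\veceps)\cap B$ and $x\in\cB(\veczero_t)$ we have $x-b\in\cB(\veceps)$ by Lemma~\ref{lem:matching}, and $x-b\in B$ would contradict $B^0\ni x=(x-b)+b$ being closed... — I would organize this so that "$B$ is a union of atoms" follows cleanly, after which $B=\bigcup_{\veceps\in S}\cB(\veceps)$ with $S$ determined, $|S|=2^{t-1}=\mu(\mathbb F_2^t)$ by the size count, and sum-freeness of $S$ follows by pulling back a Schur triple in $S$ to one in $B$. The main obstacle I anticipate is precisely this last "only if" direction — specifically, proving atom-saturation rigorously and correctly locating $\cB(\veczero_t)$ relative to $B$; everything else is a clean transfer along the quotient homomorphism $G\to\mathbb F_2^t$ combined with the value of $\mu$.
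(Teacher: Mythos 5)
Your proposal rearranges the paper's argument around the product homomorphism $\pi\colon G \to \prod_{i\in[t]}G/B_i^0 \cong \mathbb{F}_2^t$ rather than, as the paper does, first identifying the atoms as the cosets of $\cB(\veczero_t)$, computing $|\cB(\veczero_t)|=n/2^t$ by an induction on $t$, and then exhibiting $G/\cB(\veczero_t)\cong\mathbb{F}_2^t$. These are two presentations of the same underlying structure. Your surjectivity argument is a genuine alternative to the paper's induction and is correct once fleshed out: the image of $\pi$ is a subgroup of $\mathbb{F}_2^t$; if it were proper it would be contained in a hyperplane $\{\veceps:\sum_{i\in I}\eps_i=0\}$ for some nonempty $I$, which forces $B_{i_0}=\triangle_{i\in I\setminus\{i_0\}}B_i\subseteq\bigcup_{i\neq i_0}B_i$ for any $i_0\in I$, contradicting independence. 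The ``if'' direction via pullback of Schur triples along $\pi$, together with the count $|B|=2^{t-1}\cdot n/2^t=n/2=\mu(G)$, is correct and matches the paper.

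The genuine gap is in the ``only if'' direction, which you flag yourself. Your attempted step ``$x-b\in B$ would contradict $B^0\ni x=(x-b)+b$ being closed'' is not a contradiction: the two summands $x-b$ and $b$ lie in $B$, not in $B^0$, so closure of $B^0$ under addition says nothing about them. Lemma~\ref{lem:matching} is also not the right tool for atom-saturation; it only tells you $b\mapsto x-b$ preserves each atom, not how the atom meets $B$. What is actually needed -- and this is exactly what the paper does via the third isomorphism theorem -- is the nested-subgroup observation: $\cB(\veczero_t)$ and $B^0=G\setminus B$ are both subgroups of $G$ (the latter by Theorem~\ref{lem:mu}), and $\cB(\veczero_t)\subseteq B^0$ (which you derived correctly from $B\subseteq\bigcup_{i}B_i$); hence $B^0$ is a union of cosets of $\cB(\veczero_t)$, i.e.\ of atoms, and so is its complement $B$. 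Concretely, if $b\in\cB(\veceps)\cap B$ and $b'\in\cB(\veceps)$, then $b-b'\in\cB(\veczero_t)\subseteq B^0$, so $b'\in B^0$ would give $b=(b-b')+b'\in B^0$, contradicting $b\in B$; hence $b'\in B$ and the atom is saturated. With this in place, the remaining steps of your sketch (counting $|S|=2^{t-1}$ and pulling back sum-freeness of $S$ from $B$) go through.
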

\begin{proof}
To show the lemma we will  prove that 
the atoms $\cB(\veceps)$, $\veceps\in\{0,1\}^t$ are exactly the cosets of $\cB(\veczero_t)$, i.e., 
$G/\cB(\veczero_t)=\{\cB(\veceps)\colon \veceps\in\{0,1\}^t\}$,
that $\cB(\veczero_t)$ has size $n/2^t$
and that the map $\pi:G/\cB(\veczero_t)\to\mathbb{F}_2^t$ defined via $\pi(\cB(\veceps))=\veceps$ is a group isomorphism. 

 To see that  this implies
the conclusion of the lemma note first that a (largest) sum-free set  $S\subset \mathbb{F}_2^t$ of size $2^{t-1}$ 
maps to the sum-free set $\pi^{-1}(S)\subset G/\cB(\veczero_t)$ of the same size. 
The union $B$ of the cosets of $\pi^{-1}(S)$ is then a (largest) sum-free set in $G$ of size $n/2$. 
As $\pi^{-1}(S)$ cannot contain $\veczero_t$ we have $B \subseteq \bigcup_{i\in [t]}B_i$.    
On the other hand, if $B \subseteq \bigcup_{i\in [t]}B_i$ is a largest sum-free set in $G$, then 
$B^0=G\setminus B$ is a subgroup of $G$   due to Theorem~\ref{lem:mu}, which properly contains
$\cB(\veczero_t)$ since $t\geq 2$. By the third isomorphism theorem $B^0/\cB(\veczero_t)$ is then a non-trivial 
subgroup of $G/\cB(\veczero_t)$ and the union of the cosets
in $B'=\big(G/\cB(\veczero_t)\big)\setminus \big(B^0/\cB(\veczero_t)\big)$ is $B$. 
As $B$ is largest sum-free and each coset has size $n/2^t$ we have that $B'$ is sum-free in $G/\cB(\veczero_t)$ and has 
size $2^{t-1}$.
Hence, $B'$  corresponds to a largest sum-free set in $\mathbb{F}_2^t$ via $\pi$.

\medskip

We proceed with the proof of the lemma and first note that each non-empty atom $\cB(\veceps)$, $\veceps\in\{0,1\}^t$, is a 
subset of a coset of $\cB(\veczero_t)$ in $G$,  i.e., that 
 $a-a'\in B_i^0$ holds  for all $a,a'\in\cB(\veceps)$  and all $i\in[t]$. Indeed, recall that $B_i^0=G\setminus B_i$ is a subgroup of $G$.
 Hence, if $\eps_i=0$ then $a, a'\in B_i^0$ and therefore $a-a'\in B_i^0$. On the other hand, if $\eps_i=1$, then $a$ and $a'$ are in the largest sum-free set 
$B_i$. Thus,  $a+a'$ and $2a'$ are in its complement, the subgroup $B_i^0$. 
 Therefore $-2a'$ is also in $B_i^0$ and we have $a-a'=a+a'-2a'\in B_i^0$. This shows that $\cB(\veceps)$ is a subset of a coset of $\cB(\veczero_t)$.

Further, it is easily seen that two non-empty atoms $\cB(\veceps)\neq\cB(\veceps')$ are contained in different cosets of $\cB(\veczero_t)$.
Indeed, as $\veceps\neq\veceps'$  there is an index $i\in[t]$ such that, say, $\eps_i=0$ and
$\eps_i'=1$. Given $a\in \cB(\veceps)$ and $a'\in\cB(\veceps')$, then $a-a'$ cannot belong to $\cB(\veczero_t)$ since $B_i^0$ is a subgroup.
As the atoms  form a partition of $G$
we conclude  that they  are exactly the cosets of $\cB(\veczero_t)$.
%In other words, $G/\cB(0,\dots,0) = \{\cB(\veceps):  \veceps \in\{0,1\}^t\}$.

\medskip
To show  $|\cB(\veczero_t)|=n/2^t$ we note  that for all $k\in[t-1]$ the following holds
\[\cB(\veczero_k)=\big(\cB(\veczero_{k})\cap B_{k+1}^0\big)\cup \big(\cB(\veczero_k)\cap B_{k+1}\big)=\cB(\veczero_{k+1})\cup \cB(0,\dots,0,1).\]
Here $\cB(\veczero_{k+1})$ is a subgroup of $\cB(\veczero_k)$ and $\cB(0,\dots,0,1)$ is a non-empty set due to the lemma's assumption  that 
$B_{k+1}$ is not contained in the union of the remaining $B_i$'s. The argument from above yields therefore that $\cB(0,\dots,0,1)$ is a
coset of $\cB(\veczero_{k+1})$ in the group $\cB(\veczero_{k})$ which implies that $|\cB(\veczero_{k+1})|=|\cB(0,\dots,0,1)|=|\cB(\veczero_{k})|/2$.
With $|\cB(\veczero_1)|=|B_1^0|=n/2$, due to Theorem~\ref{lem:mu}, we obtain that $|\cB(\veczero_{t})|=n/2^t$.

It is left to verify  that the  bijective map 
from $G/\cB(\veczero_t)$ to $\mathbb{F}_2^t$ 
defined by $\cB(\veceps)\mapsto\veceps$ 
is a group homomorphism, i.e.,
 that for all $\veceps,\veceps'\in\{0,1\}^t$ we have $\cB(\veceps)+\cB(\veceps')=\cB(\veceps+\veceps')$ where the first addition is in $G/\cB(\veczero_t)$ and
the second is in $\mathbb{F}_2^t$. 
This can be verified component-wise, noting that for a fixed $i\in[t]$ we have  $B_i^0+B_i^0=B_i^0$, because $B_i^0$ is a subgroup, and that 
$B_i^1+B_i^1=B_i^0$, due to Theorem~\ref{lem:mu}. 
Further, as $B_i^1$ is the only coset of $B_i^0$ in $G$ we have $B_i^1+B_i^0=B_i^1$.
%Hence for any $a\in\cB(\veceps)$ and $a'\in\cB(\veceps')$ we have $a + a' \in  \cB(\veceps + \veceps')$. 
The lemma follows.
\end{proof}

The characterization of the largest sum-free sets in the additive group~$\mathbb{F}_2^t$
is immediate from Theorem~\ref{lem:mu} and the fact
that $\mathbb{F}_2^t$ has exactly $2^t-1$ subgroups
of index two, given by 
$\left\{ (a_1, \ldots, a_t) \in \mathbb{F}_2^t: \sum_{i \in I} a_i \equiv 0 \pmod 2\right\}$ for non-empty 
$I \subseteq [t]$.

\begin{lemma}\label{prop:chaf2t}
A subset $S \subseteq \mathbb{F}_2^t$ is a largest sum-free set in $\mathbb{F}_2^t$
if and only if there exists a non-empty $I \subseteq [t]$ such that
$$S = \left\{ (a_1, \ldots, a_t) \in \mathbb{F}_2^t: \sum_{i \in I} a_i \equiv 1\pmod2 \right\}.$$ 
\end{lemma}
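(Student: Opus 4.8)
The plan is to obtain everything as a translation of the structural part of Theorem~\ref{lem:mu} into the language of $\mathbb{F}_2^t$. First I would note that $|\mathbb{F}_2^t|=2^t$ has $2$ as its only prime divisor and $2\equiv 2\pmod 3$, so $\mathbb{F}_2^t$ is a type~I($2$) group. Hence Theorem~\ref{lem:mu} applies with $q=2$ and gives $\mu(\mathbb{F}_2^t)=\big(\tfrac13+\tfrac16\big)2^t=2^{t-1}$, and moreover every largest sum-free set $B$ is a union of cosets of a subgroup $H$ of order $n/q=2^{t-1}$, that is, of an index-$2$ subgroup $H$, with $B/H$ an arithmetic progression in $G/H\cong\mathbb{Z}_2$. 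Since $|B|=2^{t-1}=|H|$, $B$ is a single coset of $H$; since a sum-free set cannot contain $0$, this coset is not $H$ itself. Therefore $B=G\setminus H$ is precisely the non-trivial coset of an index-$2$ subgroup $H$.

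The next step is to enumerate the index-$2$ subgroups of $\mathbb{F}_2^t$. Regarding $\mathbb{F}_2^t$ as an $\mathbb{F}_2$-vector space, a subgroup of index $2$ is the same thing as a codimension-$1$ subspace, i.e. the kernel of a non-zero linear functional $\phi\colon\mathbb{F}_2^t\to\mathbb{F}_2$. Writing $I=\{\,j\in[t]:\phi(e_j)=1\,\}$ for the standard basis $e_1,\dots,e_t$, linearity forces $\phi(a)=\sum_{i\in I}a_i$, and $\phi\neq 0$ exactly when $I\neq\emptyset$; moreover distinct non-empty $I$ give distinct $\phi$, hence distinct kernels. (Equivalently one can just count: an index-$2$ subgroup is the kernel of a surjection $\mathbb{F}_2^t\to\mathbb{Z}_2$, and as $\mathrm{Aut}(\mathbb{Z}_2)$ is trivial such surjections are in bijection with the $2^t-1$ non-zero elements of $\mathrm{Hom}(\mathbb{F}_2^t,\mathbb{Z}_2)\cong\mathbb{F}_2^t$.) So the index-$2$ subgroups are exactly the $H_I:=\{\,a:\sum_{i\in I}a_i\equiv 0\pmod 2\,\}$ for non-empty $I\subseteq[t]$, and combining with the first paragraph, a set is a largest sum-free set if and only if it equals $G\setminus H_I=\{\,a:\sum_{i\in I}a_i\equiv 1\pmod 2\,\}$ for some non-empty $I$, which is the claimed characterization.

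This already establishes the forward implication; for the converse it remains only to confirm that each $S_I:=\{\,a:\sum_{i\in I}a_i\equiv 1\pmod 2\,\}$ is indeed a sum-free set of size $\mu(\mathbb{F}_2^t)$. It has size $2^{t-1}$ because the functional $a\mapsto\sum_{i\in I}a_i$ is surjective onto $\mathbb{F}_2$, so its two fibres are equinumerous; and it is sum-free since $a,b\in S_I$ forces $\sum_{i\in I}(a_i+b_i)\equiv 1+1\equiv 0\pmod 2$, whence $a+b\notin S_I$. Thus $S_I$ is a sum-free set of maximum size $2^{t-1}$, i.e. a largest sum-free set.

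I do not expect a genuine obstacle: the statement is a dictionary translation of Theorem~\ref{lem:mu}, and the only point requiring a (routine) argument is the completeness of the list of index-$2$ subgroups, which the functional description or the counting remark above settles.
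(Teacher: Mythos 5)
Your argument is correct and is exactly the route the paper takes: the paper also derives the lemma directly from Theorem~\ref{lem:mu} (noting $\mathbb{F}_2^t$ is type~I($2$), so a largest sum-free set is the non-trivial coset of an index-$2$ subgroup) combined with the enumeration of the $2^t-1$ index-$2$ subgroups of $\mathbb{F}_2^t$ as kernels of non-zero linear functionals. You have merely spelled out the details that the paper leaves implicit under the word ``immediate.''
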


Given an independent tuple $\cB=(B_1, \ldots, B_t)$, $t\geq 2$,
of largest sum-free sets in $G$. 
As a consequence of Lemmas~\ref{lem:intersectionlarg} and~\ref{prop:chaf2t} there is a correspondence between
largest sum-free sets of $B \subset \cup_{i\in[t]} B_i$ of $G$ 
and non-empty subsets $I_B \subset [t]$. The one which assigns $B_i$ to $\{i\}$ for each $i\in[t]$  is called the \emph{canonical correspondence}. It is easily seen that this indeed yields a correspondence, e.g., by induction on $t$. %In addition, we have the following corollary.
We summarize the properties of such a correspondence.

\begin{corollary}
\label{remark:reduction}
Let $G$ be an abelian group of even order and let $\cB=(B_1, \ldots, B_t)$, $t\geq 2$,
be an independent tuple of largest sum-free sets in $G$.
Consider the canonical correspondence between largest sum-free sets 
$B \subset \cup_{i\in[t]} B_i$ of $G$ 
and non-empty subsets $I_B \subset [t]$. 
Then the size of each atom of $\cB$ is $n/2^{t}$ and an atom 
$\cB(\veceps)$ is contained in $B$ if and only if 
$\veceps=(\eps_1,\dots,\eps_t)$ evaluates odd on $I_B$, i.e. $\sum_{i\in I_B}\eps_i$ is odd.
\end{corollary}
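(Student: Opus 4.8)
The plan is to obtain the statement by composing Lemma~\ref{lem:intersectionlarg} and Lemma~\ref{prop:chaf2t}, which between them already encode everything. The claim on atom sizes is immediate: Lemma~\ref{lem:intersectionlarg} asserts that for an independent tuple $\cB=(B_1,\dots,B_t)$ with $t\geq 2$ every atom of $\cB$ has size $n/2^t$, so in particular every atom is non-empty.

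For the correspondence I would first recall, from Lemma~\ref{lem:intersectionlarg}, that $B\mapsto S_B:=\{\veceps\in\{0,1\}^t:\cB(\veceps)\subseteq B\}$ is a bijection from the largest sum-free sets $B\subseteq\bigcup_{i\in[t]}B_i$ of $G$ onto the largest sum-free sets of $\mathbb F_2^t$, with inverse $S\mapsto\bigcup_{\veceps\in S}\cB(\veceps)$. Then, from Lemma~\ref{prop:chaf2t}, $S\mapsto I_S$, where $I_S$ is the unique non-empty subset of $[t]$ with $S=\{\veceps:\sum_{i\in I_S}\eps_i\equiv 1\pmod 2\}$, is a bijection from the largest sum-free sets of $\mathbb F_2^t$ onto the non-empty subsets of $[t]$. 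Composing these two bijections gives a bijection $B\mapsto I_B:=I_{S_B}$ from the largest sum-free sets $B\subseteq\bigcup_{i\in[t]}B_i$ onto the non-empty subsets of $[t]$. The only thing to check is that this is the canonical one, i.e.\ that it sends $B_i$ to $\{i\}$: fixing $i\in[t]$ and using that each atom is non-empty and satisfies $\cB(\veceps)\subseteq B_i^{\eps_i}$, we get $\cB(\veceps)\subseteq B_i$ exactly when $\eps_i=1$, so $S_{B_i}=\{\veceps:\eps_i=1\}=\{\veceps:\sum_{j\in\{i\}}\eps_j\equiv 1\pmod 2\}$, whence $I_{B_i}=\{i\}$.

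Finally, the atom characterization is just unwinding the definitions: for a largest sum-free set $B\subseteq\bigcup_{i\in[t]}B_i$ with associated $I_B$, the atom $\cB(\veceps)$ is contained in $B$ if and only if $\veceps\in S_B=\{\veceps:\sum_{i\in I_B}\eps_i\equiv 1\pmod 2\}$, that is, if and only if $\veceps$ evaluates odd on $I_B$. I do not expect a genuine obstacle here; the corollary is a formal consequence of Lemmas~\ref{lem:intersectionlarg} and~\ref{prop:chaf2t}, and the only point deserving a line of care is the verification that the composed bijection is precisely the correspondence the statement refers to, i.e.\ that it maps $B_i$ to $\{i\}$.
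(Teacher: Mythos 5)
Your proof is correct and follows essentially the same route as the paper, which presents the corollary as an immediate consequence of Lemma~\ref{lem:intersectionlarg} and Lemma~\ref{prop:chaf2t}. Your direct verification that the composed bijection sends $B_i$ to $\{i\}$ (via non-emptiness of atoms and $\cB(\veceps)\subseteq B_i^{\eps_i}$) replaces the paper's brief remark that this can be seen by induction on $t$, and is if anything a bit cleaner.
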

We finish the section with the proof of Proposition~\ref{prop:lowerbounds} % the lower bounds on $\kappa_{r,G}$ for $r=4,5$ and even order $G$ from . 
showing that the following holds for  sum-free sets $B_1,B_2,B_3$  in even order groups  which satisfy $|\{B_1,B_2,B_3\}|\geq 2$:
 \[\kappa_{4}(B_1\cup B_2) \geq (3\sqrt2)^{n/2}\qquad\text{ and }\qquad\kappa_{5}(B_1\cup B_2\cup B_3) \geq 6^{n/2}.\]

\begin{proof}[Proof of Proposition~\ref{prop:lowerbounds}]
Given $G$ of even order with an independent tuple $\cB=(B_1,B_2)$ of two largest sum-free sets. Consider the canonical correspondence (see Corollary~\ref{remark:reduction}), 
relating $B_i$, $i\in[2]$, to the set~$\{i\}$. 
Each atom of $\cB$ has size $n/4$ and $\cB(1,0),\cB(1,1)$ are those of $B_1$ whereas
those of $B_2$ are $\cB(0,1),\cB(1,1)$. 
 Further, there is a largest sum-free set $B_3$ corresponding to $\{1,2\}$ with the atoms $\cB(1,0)$ and $\cB(0,1)$, i.e,
$B_3=B_1\triangle B_2$.

For the first part of the proposition consider all  $\phi:B_1\cup B_2\to [4]$ 
such that $\phi^{-1}(1)\subseteq B_1$, $\phi^{-1}(2)\subseteq B_2$ and $\phi^{-1}(3),\phi^{-1}(4)\subseteq B_3$. These
colorings are clearly sum-free and there are $3^{n/2}2^{n/4}$ of them, as the elements in $\cB(1,1)$ can be colored with the colors 1 and 2, the elements in $\cB(1,0)$
with colors 1, 3 and 4 and   the elements in $\cB(0,1)$ with colors 2, 3 and 4.

For the second part first consider the case that $B_1$ and $B_2$ are two sum-free sets and $B_3=B_1\triangle B_2$ as above. 
Consider the colorings $\phi:B_1\cup B_2\to [5]$ such that $\phi^{-1}(1),\phi^{-1}(2)\subseteq~B_1$, $\phi^{-1}(3),\phi^{-1}(4)\subseteq B_2$
and $\phi^{-1}(5)\subseteq B_3$. This gives rise to $3^{n/2}4^{n/4}$  sum-free 5-colorings.

Finally, given the independent triple $\cB=(B_1,B_2,B_3)$ with atoms of size $n/8$ each,
because of Corollary~\ref{remark:reduction}. Consider the canonical correspondence %described in Remark~\ref{remark:reduction}, 
relating $B_i$, $i\in[3]$, to the set $\{i\}$. Then there are four further largest sum-free sets contained in $B_1\cup B_2\cup B_3$. One of these is $B_4=B_1\triangle B_2$
corresponding to $\{1,2\}$ and consisting of the atoms $\cB(1,0,0), \cB(1,0,1),\cB(0,1,0),\cB(0,1,1)$. 
An another one is $B_5=(B_1\triangle B_2\triangle B_3)\cup (B_1\cap B_2\cap B_3)$ corresponding to $\{1,2,3\}$
with the atoms $\cB(1,0,0), \cB(0,1,0),\cB(0,0,1),\cB(1,1,1)$.
Consider  all  colorings $\phi: B_1\cup B_2\cup B_3\to[5]$ such that $\phi^{-1}(i)\subset B_i$, $i\in[5]$. This gives rise to 
$3^{4\cdot \frac n8}2^{2\cdot\frac n8}4^{\frac n8}$ sum-free 5-colorings. 
\end{proof}

\section{Proofs of  stability: Theorem~\ref{thm:stability23}, \ref{thm:stability4}, \ref{thm:stability5}}
With the group theoretic results from previous section we are now ready to give the proofs
of  Theorem~\ref{thm:stability23}, Theorem~\ref{thm:stability4} and Theorem~\ref{thm:stability5}. 

\begin{proof}[Proof of Theorem~\ref{thm:stability23}]
We give the proof for $r=3$ only. The proof for $r=2$ follows the same line.
For given $\eps>0$ let $\delta=\eps/200$ and let $|G|=n$ be sufficiently large.

Let $A\subset G$ with $\kappa_3(A)\geq 3^{\mu(G)-\delta n}$  and let  $\cF=\cF_A$ be a container family as in Theorem~\ref{thm:container}. 
According to \eqref{it:container3} of this theorem   each $F_i$ contains at most $n^2(\log n)^{-1/9}$ Schur triples  
and  Lemma~\ref{lem:supersaturation} implies for large enough $n$ that $|F_i|\leq \mu(G)+3n(\log n)^{-1/27}$ for each $F_i\in\cF$.
Let $(F_1,F_2,F_3)$ be a triple  maximizing $|\Phi(F'_1,F'_2,F'_3)|$ over all $(F'_1,F'_2,F'_3)\in\cF^3$. 
Together with~\eqref{eq:PhiInContainer1} of Observation~\ref{obs:PhiInContainer}  we infer $3^{\mu(G)-\delta n}\leq \kappa_3(A)\leq |\cF|^3\cdot |\Phi(F_1,F_2,F_3)|$.
Further,~\eqref{it:container1} of Theorem~\ref{thm:container}  states that $\log_2 |\cF|\leq n(\log n)^{-1/18}$ which implies $\log_3|\Phi(F_1,F_2,F_3)|\geq \mu(G)-2\delta n$
 for large enough $n$.

Let $n_i$, $i=1,2,3$, be the number of elements contained in exactly~$i$ members from $(F_1,F_2,F_3)$.
Then $n_1+2n_2+3n_3= |F_1|+|F_2|+|F_3|\leq 3\mu(G)+3\delta n$, hence, $n_2\leq \frac{3}{2}(\mu(G)+\delta n-n_3)$.
We conclude that $n_3 \geq \mu(G)- 59 \delta n$ must hold otherwise the first part of~\eqref{eq:PhiInContainer2} of Observation~\ref{obs:PhiInContainer} 
together with $\frac 32\log_32<\frac{19}{20}<1$ would yield the following contradiction
\begin{align*}
\mu(G)-2\delta n\leq \log_3 |\Phi(F_1,F_2,F_3)| \leq \,\, &  n_3 +\big(\mu(G)+\delta n - n_3\big)\frac{3}{2}\log_32\\
< \,\, &  \mu(G) - 59\delta n + 60 \delta n\cdot \frac{3}{2}\log_32< \mu(G) - 2\delta n.
\end{align*}
Let $F=F_1\cap F_2\cap F_3$ and note that
since $A=F_1 \cup F_2 \cup F_3$ and $|F_i\setminus F|= |F_i|-|F|\leq 60 \delta n$ for $i=1,2,3$, we have $|A\setminus F|\leq 180\delta n$. 
To conclude the proof all is needed is to  show that $|F\setminus B|\leq \delta n$ for some largest sum-free set $B$ since this then implies $|A\setminus B|\leq181\delta n<\eps n$.
Note to this end that $|F|=n_3\geq \mu(G)-59\delta n=\left(\frac 13+\frac1{3q}-59\delta\right)n>\frac n3+\delta n$ due to  $\delta\leq  \frac{1}{200(q+1)}$. Lemma~\ref{lem:removal} then
implies that for sufficiently large $n$ there is a  sum-free set $S$ of size 
$|S|\geq |F|-\delta n\geq \mu(G)-60\delta n > \left(\frac 13+\frac1{3(q+1)}\right)n$ that is contained in $F$.
By Lemma~\ref{prop:re1} there must exist a largest  sum-free set $B\subset G$ such that $S\subset B$ showing that $|F\setminus B|\leq\delta n$, as claimed. 
\end{proof}

The proofs of Theorem~\ref{thm:stability4} and Theorem~\ref{thm:stability5} will be  more involved and we divide the argument into two parts reflected by  
two lemmas, Lemma~\ref{lem:alllargest} and Lemma~\ref{lem:OptStructure}.
In the following we state the two lemmas which will immediately imply
Theorem~\ref{thm:stability4} and Theorem~\ref{thm:stability5}. The proofs of the lemmas will be shown subsequently.

The first lemma states that $|\Phi(F_1,\dots,F_r)|$ being large implies that each of the $F_i$'s is essentially contained in a largest sum-free set in $G$. 

\begin{lemma}
\label{lem:alllargest}
Let $G$ be an abelian group of sufficiently large even order $n$. Given  
 $A\subset G$ and  $\cF=\cF(A)$ as in Theorem~\ref{thm:container} and
suppose that $G$, $r$ and $(F_1,\dots,F_r)\in\cF^r$ are  such that
%\footnote{Note that in the case that $G$ has at least two largest sum-free sets the lower bounds 
%yields  $\kappa_{4,G}\geq 3^{n/2}2^{n/4}>2.0597^n$ and $\kappa_{5,G}\geq 6^{n/2}>2.449^n$.}
\begin{itemize}
\item $G$ has a unique largest sum-free set and  $|\Phi(F_1,\dots, F_r)|>\left(\sqrt{r}-\frac1{1000}\right)^{n}$ or
\item $r=4$ and $G$ has at least two largest sum-free sets and $|\Phi(F_1,\dots, F_r)|>2.01^n$ or
\item $r=5$ and $G$ has at least two largest sum-free sets and $|\Phi(F_1,\dots,F_r)|>2.41^n$.
\end{itemize}

Then 
there is an $r$-tuple of largest sum-free sets $(B_1,\dots,B_r)$ in $G$ such that
\[|F_i\setminus B_i|\leq3 n(\log n)^{-1/27}\quad\text{ for all $i\in[r]$}.\]
\end{lemma}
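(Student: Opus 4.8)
The plan is to argue that if $|\Phi(F_1,\dots,F_r)|$ is large, then each individual $F_i$ must be large (close to $\mu(G)=n/2$), and then use the group-theoretic results of Section~\ref{sec:groups} to upgrade "large" to "essentially contained in a largest sum-free set". First I would record that by property~\eqref{it:container3} of Theorem~\ref{thm:container} every $F_i$ contains at most $n^2(\log n)^{-1/9}$ Schur triples, so Lemma~\ref{lem:supersaturation} gives $|F_i|\le \mu(G)+3n(\log n)^{-1/27} = n/2 + 3n(\log n)^{-1/27}$ for all $i$, once $n$ is large. Write $s_i=|F_i|$ and let $n_k$ be the number of elements of $A=F_1\cup\dots\cup F_r$ lying in exactly $k$ of the $F_i$'s, so that $\sum_k k\,n_k=\sum_i s_i$ by \eqref{eq:PhiInContainer2}. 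The bound $|\Phi(F_1,\dots,F_r)|\le\prod_k k^{n_k}$ from Observation~\ref{obs:PhiInContainer}, together with the constraint $\sum_k k\,n_k\le rn/2 + O(rn(\log n)^{-1/27})$, gives an optimization problem over the $n_k$: to make $\prod_k k^{n_k}$ large subject to this weight constraint one wants to push mass onto the $k$ with the best ratio $\tfrac{\log k}{k}$. The key point is that the hypothesized lower bounds on $|\Phi|$ are chosen precisely to be strictly larger than what any configuration with some $s_i$ significantly below $n/2$ can produce; making this quantitative is the crux of the argument.

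Concretely, I would show by this convexity/LP argument that the stated lower bounds force every $s_i=|F_i|\ge n/2 - cn(\log n)^{-1/27}$ for a suitable constant $c$ (say $c=3$). In the unique-largest-sum-free-set case one uses the bound $\sqrt r\,^{\,n}$, which is the value attained when $n_2=n/2$ (everything doubly covered by pairs of copies of the same set $B$) and $n_1=n_k=0$ otherwise — this is the maximizer of $\prod k^{n_k}$ under $\sum k n_k = rn/2$ when $r$ is a perfect square is not needed; rather one checks that shaving $\delta n$ off some $s_i$ loses a factor exponential in $\delta n$. In the $r=4$ and $r=5$ cases with at least two largest sum-free sets, the thresholds $2.01^n$ and $2.41^n$ are chosen just above $2^n=(3\sqrt2)^{n/2}/\ldots$ — no, more carefully: they sit strictly between the "all $F_i$ large" optimum and the best value obtainable once one $F_i$ is short by $\Omega(n)$; the explicit constants $2.01$ and $2.41$ come from solving the corresponding two- or three-variable optimization and leaving a gap. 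Either way, the conclusion is the same: all $|F_i|\ge n/2 - 3n(\log n)^{-1/27}$.

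Once each $F_i$ is this large, I invoke Lemma~\ref{lem:3over8} (or directly Lemma~\ref{lem:removal} followed by Lemma~\ref{prop:re1}) to pass from "$F_i$ has few Schur triples and $|F_i|>\tfrac25 n$" to "$F_i$ is contained in a largest sum-free set $B_i$". More precisely: since $F_i$ contains at most $n^2(\log n)^{-1/9}$ Schur triples, applying Lemma~\ref{lem:removal} with a parameter $\epsilon$ of order $(\log n)^{-1/27}$ yields a sum-free set $S_i\subseteq F_i$ with $|S_i|\ge|F_i|-\epsilon n\ge n/2 - O(n(\log n)^{-1/27}) > \tfrac25 n$, and then Lemma~\ref{lem:3over8} (its "in particular" clause) gives a largest sum-free set $B_i\supseteq S_i$. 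Since $B_i$ has size exactly $n/2$ and $|F_i|\le n/2+3n(\log n)^{-1/27}$, we get $|F_i\setminus B_i| = |F_i| - |F_i\cap B_i| \le |F_i| - |S_i| \le 3n(\log n)^{-1/27}$, as required.

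The main obstacle I anticipate is the optimization step: one must verify rigorously that the stated thresholds on $|\Phi(F_1,\dots,F_r)|$ genuinely cannot be met unless every $s_i$ is within $O(n(\log n)^{-1/27})$ of $n/2$. This requires carefully analyzing $\max \prod_{k\in[r]} k^{n_k}$ subject to $\sum_k k\,n_k = \sum_i s_i$ and $n_k\ge 0$, $\sum_k n_k \le n$ (and the implicit constraint that the $n_k$ come from an actual cover by $r$ sets, though one only needs the relaxation), treating the $s_i$ as variables bounded above by $n/2+o(n)$, and checking that decreasing any $s_i$ below $n/2-3n(\log n)^{-1/27}$ strictly drops the maximum below the threshold. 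The case $r=5$ with the bound $2.41^n$ is the most delicate, since several competing near-optimal configurations (a $4$-atom of weight $n/2$ versus mixtures of $2$- and $3$-atoms) must all be shown to fall on the correct side of $2.41$; this is presumably why the constant $2.41$ rather than something rounder is used. The unique-set cases are comparatively clean because the relevant cover is "$r$ copies of one set $B$", so the only freedom is in how the $n_k$ distribute the total weight $\le rn/2$.
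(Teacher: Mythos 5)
There is a genuine gap, concentrated in the unique--largest--sum--free--set case. You propose to show that every $|F_i|\ge n/2 - O(n(\log n)^{-1/27})$ and then finish with Lemma~\ref{lem:removal} plus the ``in particular'' clause of Lemma~\ref{lem:3over8}. The finishing step is right, but the intermediate claim is both unnecessary and false. What the finishing step actually needs is $|F_i|>\tfrac25 n+\gamma n$, and the hypotheses do \emph{not} force $|F_i|$ close to $n/2$: e.g.\ for $r=4$ with $|\Phi|>2.01^n$, the coarse bound $\log_3|\Phi|\le\tfrac13\sum_i|F_i|$ (from $\tfrac i3\ge\log_3 i$) is perfectly consistent with some $|F_i|\approx 0.45n$, since $3^{(3\cdot\frac12+0.45)/3}\approx 2.04>2.01$. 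The paper therefore proves only the weaker and correct statement $|F_i|>\tfrac25 n+\gamma n$ and then upgrades to ``good'' via the removal lemma and Lemma~\ref{lem:3over8}.

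The second and more serious issue is your remark that the unique-LSF case is ``comparatively clean.'' It is in fact the hard case. In the case of at least two largest sum-free sets, the thresholds $2.01^n$ and $2.41^n$ are deliberately set above what a single $|F_i|\le\tfrac25 n+\gamma n$ can produce under the coarse bound (respectively $3^{19/30}\approx 2.005$ and $3^{4/5}\approx 2.408$), so the whole argument is one line. But in the unique case the threshold $(\sqrt r-\tfrac1{1000})^n$ is so close to the ceiling $r^{n/2}$ that the coarse bound only forces $k_4=3$ (for $r=4$) and $k_5=2$ (for $r=5$) of the $F_i$ to be large. You still have to treat the remaining sets, and this is where the paper does real work: it uses the quantitative conclusion of Lemma~\ref{lem:3over8} (if $S$ is sum-free, $|S|>\tfrac38n$, and $S$ is \emph{not} in an LSF set, then $|L\cap B|=|L\setminus B|=\tfrac n5$ for the containing coset-structured set $L$) to bound $|F_i^0|$ and $|F_i^1|$ separately, and then a careful bookkeeping over the tuple $(B,\dots,B,F_j^1,\dots,F_5^1)$ with parameters $m_i$, $f$, $f_0$. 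A convexity/LP argument on the $n_k$ alone, without this structural input, will not close the $r=5$ unique case. Your own ``main obstacle'' paragraph is where the proof lives, and the sketch there does not actually reach the required conclusion.
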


Note that Theorem~\ref{thm:stability4} and Theorem~\ref{thm:stability5} immediately follows
in case $G$ has a unique largest sum-free set.
For the case that $G$ has at least 
two largest sum-free sets we put forth the following lemma, which determines the intersection  structure 
of the optimal configuration of the $B_i$'s.
\begin{lemma}\label{lem:OptStructure}
Given an abelian group $G$ of even order.
% and define $\eta_4=\log_32$, $\eta_5=\frac{13}{16}$,  $\theta_4=\frac14$ and $\theta_5=\frac12$.

If  $\mathcal{B}  = (B_1, \ldots, B_4)$ is tuple of largest sum-free sets 
such that  
 $|\Phi({B_1,\dots,B_4})| > 2^{n}$,
 then there are two sets among $B_1,\dots,B_4$ whose union contains all $B_i$, $i=1,\dots,4$, and
 $\mathcal{B}$ consists of one 2-atom and two 3-atoms. In particular, $|\Phi(\mathcal{B})|= (3\sqrt2)^{n/2}$.
\medskip

 If  $\cB=(B_1,\dots,B_5)$ is a tuple of largest sum-free sets  with 
 $|\Phi({B_1,\dots,B_5})|>\left(3\cdot2^{7/2}\right)^{n/4}.$
Then either
\begin{itemize}
 \item there are two sets among $B_1,\dots,B_5$ whose union contains all $B_i$, $i=1,\dots,5$, and 
  $\mathcal{B}$ consists of two 3-atoms and one 4-atom, or
 \item  there are three sets among $B_1,\dots,B_5$ whose union contains all $B_i$, $i=1,\dots,5$, and
  $\mathcal{B}$ consists of two 2-atoms, four 3-atoms and one 4-atom.
 \end{itemize}
 In both cases we have $|\Phi(\cB)|=6^{n/2}$.
\end{lemma}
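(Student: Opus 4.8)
The plan is to encode the tuple $\mathcal{B}$ as a multiset of nonzero vectors in $\mathbb{F}_2^s$ for some $s\le r$, reduce $|\Phi(\mathcal{B})|$ to a product over the atoms, and then carry out a finite optimization. Let $\mathcal{B}=(B_1,\dots,B_r)$ be a tuple of largest sum-free sets in $G$ and put $N=\bigcap_{i\in[r]}B_i^0$. By Theorem~\ref{lem:mu} each $B_i^0$ is a subgroup of $G$ of index two, so $H:=G/N$ is an elementary abelian $2$-group, say $H\cong\mathbb{F}_2^s$ with $s\le r$ and $|N|=n/2^s$. Each $B_i^0/N$ is the kernel of a nonzero homomorphism $\chi_i\colon H\to\mathbb{F}_2$, and $\bigcap_i\ker\chi_i=\{\veczero\}$ forces $\chi_1,\dots,\chi_r$ to span $H^\ast$; fixing an isomorphism $H\cong\mathbb{F}_2^s$ and identifying $H^\ast\cong\mathbb{F}_2^s$ via the standard pairing $\langle\cdot,\cdot\rangle$, one obtains nonzero spanning vectors $v_1,\dots,v_r\in\mathbb{F}_2^s$ with $v_i$ corresponding to $\chi_i$. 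Since the $v_i$ span, the map $h\mapsto(\langle v_i,h\rangle)_{i\in[r]}$ is injective, so the non-empty atoms of $\mathcal{B}$ are exactly the cosets of $N$; the coset indexed by $\veceps\in\mathbb{F}_2^s\setminus\{\veczero\}$ lies in $\bigcup_{i}B_i$, has size $n/2^s$, and is a $k(\veceps)$-atom with $k(\veceps):=|\{i\in[r]:\langle v_i,\veceps\rangle=1\}|\ge 1$. As the $B_i$ are sum-free, $|\Phi(\mathcal{B})|$ equals the number of $r$-colorings of $\bigcup_iB_i$ whose $i$-th class lies in $B_i$, so
\begin{equation*}
|\Phi(\mathcal{B})|=\prod_{\veceps\in\mathbb{F}_2^s\setminus\{\veczero\}}k(\veceps)^{\,n/2^s}.
\end{equation*}
It therefore suffices, for each of $r=4,5$, to maximise $\bigl(\prod_{\veceps\neq\veczero}k(\veceps)\bigr)^{1/2^s}$ over $s\le r$ and all multisets $\{v_1,\dots,v_r\}$ of nonzero vectors spanning $\mathbb{F}_2^s$, and to describe the near-maximizers.

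For $r=4$ there are, up to the action of $GL_s(\mathbb{F}_2)$ and reordering, only finitely many spanning multisets with $s\in\{1,2,3,4\}$, and one checks directly that $\prod_{\veceps\neq\veczero}k(\veceps)^{n/2^s}\le 2^n$ in all of them except when $s=2$ and the multiset is $\{u,u,u',u+u'\}$ for a basis $u,u'$ of $\mathbb{F}_2^2$; in that case the three values $k(\veceps)$ are $3,2,3$, so $\mathcal{B}$ consists of one $2$-atom and two $3$-atoms and $|\Phi(\mathcal{B})|=3^{n/2}2^{n/4}=(3\sqrt2)^{n/2}$. Unwinding the correspondence, two of the $B_i$ coincide, say $B_1=B_2$, and one of the others, say $B_4$, equals $B_1\triangle B_3$, so $B_1\cup B_3=\bigcup_iB_i$, i.e. two of $B_1,\dots,B_4$ have union $\bigcup_iB_i$. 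Since $|\Phi(\mathcal{B})|>2^n$ forces $\prod_{\veceps\neq\veczero}k(\veceps)^{n/2^s}>2^n$, this is the only surviving configuration, which is the first claim.

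For $r=5$ the same enumeration over $s\in\{1,\dots,5\}$ shows $\prod_{\veceps\neq\veczero}k(\veceps)^{n/2^s}\le 6^{n/2}$ throughout, with equality exactly for two configurations up to isomorphism: $s=2$ with multiset $\{u,u,u',u',u+u'\}$, where the $k(\veceps)$ are $3,3,4$ and hence $\mathcal{B}$ consists of two $3$-atoms and one $4$-atom with two of the $B_i$ covering $\bigcup_iB_i$; and $s=3$ with multiset $\{u_1,u_2,u_3,u_1+u_2,u_1+u_3\}$ for a basis $u_1,u_2,u_3$, where the $k(\veceps)$ form a permutation of $(2,2,3,3,3,3,4)$ and hence $\mathcal{B}$ consists of two $2$-atoms, four $3$-atoms and one $4$-atom with the three $B_i$ corresponding to the basis covering $\bigcup_iB_i$. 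Every remaining spanning multiset gives at most $1080^{n/8}$, and since $1080<1152=(3\cdot2^{7/2})^2$ the hypothesis $|\Phi(\mathcal{B})|>(3\cdot2^{7/2})^{n/4}$ excludes them; a direct computation in the two surviving configurations gives $|\Phi(\mathcal{B})|=3^{n/2}4^{n/4}=6^{n/2}$ and $|\Phi(\mathcal{B})|=2^{n/2}3^{n/2}=6^{n/2}$, respectively.

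The one step that is not a routine finite check is verifying that the profile one would naively guess from AM--GM --- namely $k(\veceps)$ as equal as possible subject to $\sum_{\veceps\neq\veczero}k(\veceps)=r\cdot2^{s-1}$ --- is \emph{not} realizable when $s\ge 3$; for $r=5$, $s=3$ that profile is $k(\veceps)\equiv(3,3,3,3,3,3,2)$, which would give $1458^{n/8}>6^{n/2}$, contradicting the statement. I expect this to be the main obstacle, and the plan to handle it is the identity obtained by double counting the pairs $(\{i,j\},\veceps)$ with $i<j$ and $v_i,v_j$ both orthogonal to $\veceps$:
\begin{equation*}
\sum_{\veceps\neq\veczero}\binom{t(\veceps)}{2}=\binom{r}{2}\bigl(2^{\,s-2}-1\bigr)+m\cdot 2^{\,s-2},\qquad t(\veceps):=r-k(\veceps),
\end{equation*}
where $m$ is the number of coincident unordered pairs among $v_1,\dots,v_r$. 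For $s\ge 3$ this yields $\sum_{\veceps\neq\veczero}\binom{t(\veceps)}{2}\ge\binom{r}{2}(2^{s-2}-1)$, which already rules out the offending profiles --- e.g. for $r=5$, $s=3$ it reads $\sum_{\veceps\neq\veczero}\binom{t(\veceps)}{2}\ge 10$, while the profile above has $\sum\binom{t}{2}=9$ --- and with this constraint in force the finitely many remaining profiles are disposed of by inspection, after which the atom counts stated in the lemma follow by recording, for each value $k$, the number of nonzero $\veceps$ with $k(\veceps)=k$.
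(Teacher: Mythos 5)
Your proposal is correct and follows the same overall strategy as the paper: reduce the atom structure to $\mathbb F_2^s$ (the paper does this via Lemma~\ref{lem:intersectionlarg} and Corollary~\ref{remark:reduction}, picking a maximal independent subtuple; your direct quotient by $N=\bigcap_i B_i^0$ is the same thing, since $N=\cA(\veczero_t)$ for that subtuple and $s=t$), then carry out a finite optimization of $\prod_{\veceps\neq\veczero}k(\veceps)$ over spanning multisets of nonzero vectors. The one genuine difference is your double-counting identity
\[
\sum_{\veceps\neq\veczero}\binom{t(\veceps)}{2}=\binom{r}{2}\bigl(2^{s-2}-1\bigr)+m\cdot 2^{s-2},
\]
which you use as a uniform necessary condition for an atom profile to be realizable. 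The paper instead solves only the two linear equations ``$\sum_i a_i=$ number of nonzero atoms'' and ``$\sum_i ia_i=r\cdot 2^{s-1}$'', and then rules out the troublesome profile $(c_2,\dots,c_5)=(1,6,0,0)$ with an ad hoc pigeonhole argument about which largest sum-free sets of $\mathbb F_2^3$ can avoid the $3$-atom $\cB'(1,1,1)$. Your identity disposes of $(1,6,0,0)$ by parity ($\sum\binom t2=9$ while the right side is $\ge 10$ and even), and in one stroke also eliminates $(3,2,2,0)$, $(4,1,1,1)$, $(5,0,0,2)$ at $s=3$, whereas the paper only needs the strict inequality hypothesis to exclude $(3,2,2,0)$. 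Your identity also tightens the feasible profiles at $s=4,5$, which is needed to justify your claim that every remaining spanning multiset gives at most $1080^{n/8}$ — naive AM--GM on integer $k$'s summing to $r\cdot 2^{s-1}$ is \emph{not} sufficient there (for $r=5$, $s=4$ the unconstrained integer maximum is $3^{10}2^5=1889568>1080^2$, but it fails your identity). So while the skeleton is identical, the realizability step is handled more systematically in your version; you should make the enumeration for $s=4,5$ explicit rather than leaving it as an assertion, since that is where the identity actually carries weight.
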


Theorem~\ref{thm:stability4} and Theorem~\ref{thm:stability5} are now easy consequences of the two lemmas.

\begin{proof}[Proof of Theorem~\ref{thm:stability4} and Theorem~\ref{thm:stability5}]
Given an abelian group $G$ of sufficiently large even order~$n$ and let  $\gamma=   3{(\log n)^{-1/27}}$.
Let $A\subset G$, $\kappa_r(A)$ and $\cF=\cF(A)$ be as stated in Theorem~\ref{thm:stability4} or Theorem~\ref{thm:stability5}, respectively.
In the case that $G$ has a unique largest sum-free set $B$  let $(F_1,\dots,F_r)$ be the tuple among all $(F_1',\dots,F_r')\in\cF^r$  which maximizes $|\Phi(F_1',\dots, F_r')|$.
Owing to \eqref{eq:PhiInContainer1} of Observation~\ref{obs:PhiInContainer} we have 
$\kappa_r(A)=|\Phi_r(A)|\leq |\cF|^r|\Phi(F_1,\dots,F_r)|$. As $|\cF|^r\leq 2^{rn(\log n)^{-1/18}}\ll \kappa_r(A)$
we conclude for sufficiently large $n$ that  $|\Phi(F_1,\dots, F_r)|> |\cF|^{-r}\left(r-\frac1{1000}\right)^{n/2}>\left(\sqrt{r}-\frac 1{1000}\right)^n$ for 
$r=4,5$. 

 On the other hand, if $G$ has at least two largest sum-free sets let $(F_1,\dots,F_r)$  be a tuple given by the theorems, i.e.,
\begin{itemize}
\item for $r=4$ we have $|\Phi(F_1,\dots,F_4)|> \left(3\sqrt 2-\frac1{25}\right)^{n/2}> 2.01^n > 4^{4\gamma n} \cdot 2^{n}$  and
\item  for $r=5$ we have $|\Phi(F_1,\dots,F_5)|>5.9^{n/2}>5^{5\gamma n}\left(3\cdot2^{7/2}\right)^{n/4}>2.41^n$. 
\end{itemize}

In particular,  the presumptions of Lemma~\ref{lem:alllargest} are satisfied  in all cases and we infer
that there is a tuple $\cB=(B_1,\dots, B_r)$ of  largest sum-free sets in $G$ such that 
$|F_i\setminus B_i|\leq \gamma n$ for all $i\in[r]$. 
As $A=F_1\cup \dots\cup F_r$ we have $|A\setminus B|\leq r\gamma n$ and the theorems follow in the case $G$ has a unique largest sum-free set.

Suppose now that $G$ has at least two largest sum-free sets. We aim to apply Lemma~\ref{lem:OptStructure} to~$\cB$ from above and therefore now verify its presumption.
Define for all $i\in[r]$ the sets $F_i^1=F_i\cap B_i$ and $F_i^0=F_i\setminus B_i$.
Then 
$|\Phi(F_1,\dots, F_r)|\leq|\Phi(F_1^0,\dots, F_r^0)|\cdot |\Phi(F_1^1,\dots, F_r^1)|$ and 
$|\Phi(F_1^0,\dots, F_r^0)|\leq r^{r\gamma n}$. Further,  any coloring 
$\phi\in\Phi(F_1^1,\dots, F_r^1)$ can be extended to a coloring $\phi'\in \Phi(B_1,\dots, B_r)$ 
as $F_i^1\subset B_i$ and $B_i$ is sum-free. This implies $|\Phi(F_1^1,\dots, F_r^1)|\leq  |\Phi(B_1,\dots, B_r)|$ and we 
infer $|\Phi(\cB)|\geq r^{-r\gamma n}|\Phi(F_1,\dots, F_r)|$. Considering the bounds on $|\Phi(F_1,\dots, F_r)|$ from above, this shows
that $\cB$ satisfies the presumptions of Lemma~\ref{lem:OptStructure} and
 the theorems follow from the conclusions of this 
lemma for the case $G$ has at least two largest sum-free sets.
\end{proof}

\subsection{Proofs of Lemma~\ref{lem:alllargest} and Lemma~\ref{lem:OptStructure}}
We first show the proof of Lemma~\ref{lem:OptStructure}. Note that  
Corollary~\ref{remark:reduction} reduces the problem to
a finite task which one could check exhaustively. For completeness we give a self-contained proof.

\begin{proof}[Proof of Lemma~\ref{lem:OptStructure}]

Given the tuple $\cB=(B_1,\dots,B_r)$ with the properties stated in the lemma. 
Let  $n_i = \sum_{\veceps \,: \, \sum \eps_j =i } |\mathcal{B}(\veceps)|$ be the number of elements contained in $i$-atoms, i.e., those belonging to exactly
$i$ sets from $\cB$. Recall from \eqref{eq:PhiInContainer2} of Observation~\ref{obs:PhiInContainer} that 
\begin{align}\label{eq:phibis0}\log_3 |\Phi(\mathcal{B})|\leq  \sum_{i \in [r]} {n_i}\log_3 i \qquad \text{ and }\qquad  \sum_{i\in[r]}i \cdot n_i=\sum_{i\in[r]}|B_i|=r\mu(G)=\frac {rn}2.
\end{align}
As $\frac i3\geq\log_3i$ for all integer $i\geq 1$ we derive  
for $\eta_4=\log_32$ and $\eta_5=13/16$ that
\begin{equation} \label{eq:phibis}
\eta_rn< \log_3 |\Phi(\mathcal{B})|\leq  \sum_{i \in [r]} {n_i}\log_3 i\leq \frac13\sum_{i\in[r]}i\cdot n_i - \frac{1}{3} n_1 \leq \frac13\left(\frac{r n}2-n_1\right). 
\end{equation}
Let $t\in[r]$ be the largest number such that there is an independent $t$-tuple of elements in $\cB$, which we denote by $\cA=(A_1,\dots,A_t)$.
Recall the correspondence of the structure of $\cA$ to that of $\mathbb F_2^t$ as described in Corollary~\ref{remark:reduction}. 
According to this each largest sum-free set $B$ contained in $\cup A_i$ corresponds to a largest sum-free set in $\mathbb F_2^t$ and is associated to a set $I=I(B)\subset [t]$.
The atoms of $\cA$ then correspond  to elements $\veceps\in \mathbb F_2^t$ and an atom $\cA(\veceps)$ is contained in $B$  if and only if 
 $\veceps=(\eps_1,\dots,\eps_t)$ evaluates odd on $I=I(B)$, i.e. $\sum_{i\in I}\eps_i$ is odd. 
Throughout the proof we will consider the canonical correspondence assigning $A_i$, $i\in[t]$, to the set $I=\{i\}$, so that, the 1-atoms correspond to those $t$ elements 
$\veceps\in\mathbb F_2^t$
with exactly one $1$-entry.

Using this correspondence   we first show that $1<t\leq r-2$ and  $n_1=0$. The bound $t>1$ is trivial, since otherwise 
we would have $|\Phi(\cB)|=r^{\frac n2}\leq3^{\eta_rn}$ which contradicts \eqref{eq:phibis}.
Suppose that $t=r$. 
 Then the atoms have size $\frac n{2^r}$ due to Corollary~\ref{remark:reduction} and the number of 1-atoms is $r$. Hence,
$n_1=r\frac n{2^r}$ which yields the contradiction $\log_3|\Phi(\cB)|\leq\eta_r n$. On the other hand, if $t\leq r-1$ and $n_1>0$ then 
$n_1$ has the size of at least one atom, i.e., $n_1\geq n/2^{r-1}$ 
which again yield a violation of $\log_3|\Phi(\cB)|>\eta_r n$.  Having $t=r-1$, say $A_1=B_1,\dots, A_{r-1}=B_{r-1}$,  and $n_1=0$, 
however, means that the remaining  largest sum-free set $B\in \{B_1,\dots, B_r\}\setminus \{A_1,\dots,  A_{r-1}\}$ must cover all 1-atoms of $\cA$.
Since $A_i$ corresponds to $\{i\}$ the set $B$  must correspond to $I=[t]$. 
This determines the atom structure of $\cB$ completely and it is readily checked that 
 $\cB$ then consists of six $2$-atoms and one $4$-atom in case $r=4$ and 
of ten $2$-atoms and five $4$-atoms in case $r=5$. With \eqref{eq:phibis0} this yields a contradiction to the lower bound $\log_3|\Phi(\cB)|>{\eta_r n}$. 

Hence, let $1<t\leq r-2$ and $n_1=0$ in both cases $r=4$ and $r=5$. 
For $t=2$ and $r=4$ let $a_i$, $i=2,3,4$, denote the number of $i$-atoms of $\mathcal{B}=(B_1,\dots,B_r)$.
Similarly define $b_i$, $i=2,\dots,5$ for $t=2$, $r=5$ and $c_i$, $i=2,\dots,5$, for $t=3$ and $r=5$. 
Due to Corollary~\ref{remark:reduction} we know that in the case $t=2$ ($t=3$, respectively) the set $\cup_{i \in [r]}B_i$ is the union of three (seven, respectively)
atoms, each of size $n/2^t$. This and  the second part of \eqref{eq:phibis0} implies that 
\begin{align}\label{eq:sys}
 \sum_{i=2}^4a_i=\sum_{i=2}^5b_i=3,\quad \sum_{i =2}^{5}c_i =7,\qquad \sum_{i=2}^4i a_i=8,\qquad  \sum_{i=2}^{4}i b_i=10, \qquad\sum_{i=2}^{5}i c_i =20. 
\end{align}
Solving the equations yields that $(a_2,a_3,a_4)\in\{(2,0,1),(1,2,0)\}$.
If $(a_2,a_3,a_4)=(2,0,1)$ then the first part of~\eqref{eq:phibis0} yields 
$|\Phi(\cB)|\leq (4\cdot4)^{n/4}=2^{n}$ which violates the lower bound
on $|\Phi(\cB)|$. For $(a_2,a_3,a_4)=(1,2,0)$ we have $|\Phi(\cB)|\leq (2\cdot 9)^{n/4}$.
Further,  by considering largest sum-free sets in 
 $\mathbb F_2^2$ corresponding to $I_1=\{1\}$, $I_2=\{2\}$, $I_3=I_4=\{1,2\}$,  
 one obtains via the canonical correspondence 
a tuple $(B_1,\dots,B_4)$ which consists of one $2$-atoms and two $3$-atoms, hence realizing  the optimum  $(a_2,a_3,a_4)=(1,2,0)$.
The first part of the lemma follows.

Turning to the second part of the lemma we first solve \eqref{eq:sys} for  $(b_2,\dots,b_5)$ and obtain  its solution set $\{(1,1,0,1),(1,0,2,0),(0,2,1,0)\}$.
It is readily checked that $|\Phi(\cB)|\leq 6^{n/2}$ where the maximum is achieved only for $(b_2,\dots,b_4)=(0,2,1,0)$. The second largest value is attained by
$(1,0,2,0)$ which would yield $|\Phi(\cB)|\leq (2\cdot 16)^{n/8}$, a contradiction to the lower bound. 
Further, by considering  largest sum-free sets in $\mathbb F_2^2$ corresponding to $I_1=I_2=\{1\}$, $I_3=I_4=\{2\}$
and $I_5=\{1,2\}$ we see that the optimum $(b_2,\dots,b_4)=(0,2,1,0)$ can indeed be realized.

Finally, we solve \eqref{eq:sys} for $(c_2,\ldots, c_5)$ and obtain its solution set $\{(1, 6, 0, 0), (2, 4, 1, 0),(3, 2, 2, 0),$ 
$(3, 3, 0, 1),(4, 0, 3, 0), (4, 1, 1, 1), (5, 0, 0, 2) \}$. However, the maximum is attained by $(1, 6, 0, 0)$,
which cannot be realized as 
atom structure. To see this, suppose that $\mathcal{B}=(B_1,\cdots,B_5)$ has this atom structure. Assume that  
$\cB'=(B_1, B_2, B_3)$ is independent and corresponds to $I_i=\{i\}$. 
Note that $\cB'(1,1,1)$ is a 3-atom, hence $B_4$ and $B_5$ 
cannot contain $\mathcal{B'}(1,1,1)$ since $\cB(1,1,1)$ would be a 4-atom or a 5-atom otherwise.
However, largest sum-free sets of $\mathbb{F}_2^3$ which do not contain $(1,1,1)$
correspond to two elements sets $I\subset[3]$. In particular, they consist of two 1-atoms  and two 2-atoms of $\mathcal{B}'$. As there are only three 2-atoms in $\cB'$  
we conclude that $\mathcal{B}$ would contain a 4-atom.

Within the solutions for $(c_2,\dots,c_5)$, the third  largest
value is achieved by $(3,2,2,0)$ giving $|\Phi(\cB)|\leq (8\cdot9\cdot16)^{n/8}$, a contradiction to the lower bound.
The second largest is obtained by
$(2,4,1,0)$ giving $|\Phi(\cB)|\leq (4\cdot 81\cdot 4)^{n/8}=6^{n/2}$.
Moreover, the five largest sum-free sets corresponding to $I_i=\{i\}$, $i\in[3]$, $I_4=\{1,2\}$ and $I_5=\{1,3\}$ indeed consists of
two 2-atoms, four 3-atoms and one 4-atom showing that $(2,4,1,0)$ is indeed realizable. The lemma follows.
\end{proof}

\begin{proof}[Proof of Lemma~\ref{lem:alllargest}] 
Let $\gamma=3(\log n)^{-1/27}$.
Given an even order abelian group $G$ of sufficiently large order $n$ together with a container family $\cF$ as in Theorem~\ref{thm:container} and let 
$(F_1,\dots,F_r)$ be a  tuple as in the lemma. 
A set $F$ is called good if there 
is a largest sum-free set $B$ in $G$ such that $|F\setminus B| \leq \gamma n$. 
Hence, to establish the lemma we need to show that all $F_i$, $i=1,\dots,r$, are good.

Note to this end that $|F_i|> \frac 25n+\gamma n$ readily implies that $F_i$ is good.
Indeed, by property \eqref{it:container3} of Theorem~\ref{thm:container} we know that $F_i$
contains at most $n^2(\log n)^{-1/9}$ Schur triples. Lemma~\ref{lem:removal}  then implies that 
one can obtain a sum-free set $S_i\subset F_i$ by removing at most $\gamma n$ elements from $F_i$. 
As $|S_i|>\frac 25n$ we conclude by the last part of Lemma~\ref{lem:3over8} that there is a largest sum-free set $B_i$ such that 
$S_i\subset B_i$, showing that $F_i$ is  good.

For any given tuple $(F_1,\dots, F_r)\in\cF^r$ of containers let $n_i$ denote the number of elements contained in exactly $i$ sets 
from $\{F_1,\dots,F_r\}$. Recall from~\eqref{eq:PhiInContainer2} that
\begin{align}\label{eq:trivcol}
|\Phi(F_1,\dots,F_r)|\leq \prod_{i\in[r]}i^{n_i}\qquad \text{and}\qquad \sum_{i\in[r]}i\cdot n_i=\sum_{i\in[r]}|F_i|
\end{align}
holds. Using $\frac i3\geq\log_3i$ for all integer $i\geq 1$ we derive
\begin{align}\label{eq:n3triv}\log_3|\Phi(F_1,\dots,F_r)|\leq \frac13\sum_{i\in[r]}|F_i|.\end{align}
Further, by~\eqref{it:container3} of Theorem~\ref{thm:container} and Lemma~\ref{lem:supersaturation}  we have $|F_i|\leq\frac n2+2^{20}n(\log n)^{-1/45}$ for all $i\in[r]$.

We first consider the case that $G$ has at least two largest sum-free sets for which the lemma easily follows.
Indeed, suppose that  there is an $i\in[r]$ such that $|F_i|\leq\frac 25n+\gamma n$. Then we have 
\[\sum_{i\in[r]}|F_i|\leq (r-1)\left(\frac n2+2^{20}\frac n{(\log n)^{\frac1{45}}}\right)+\frac 25n+\gamma n,\]
which, with \eqref{eq:n3triv} and sufficiently large $n$, yields $|\Phi(F_1,\dots,F_r)|< 2.01^{n}$ in the case $r=4$ and 
$|\Phi(F_1,\dots,F_r)|< 2.41^{n}$ in the case $r=5$. This, however, yields a contradiction to the fact that $(F_1,\dots,F_r)$ is substantial and we conclude 
that $|F_i|>\frac 25n+\gamma n$ for all $i\in[r]$ implying that $F_i$ is good, as claimed.\footnote{We note that the same argument also works for $r=6$ in conjunction with
the lower bound $\kappa_{r,G}\geq \kappa_6(B_1\cup B_2\cup B_3)\geq 2^{3n/4} 3^{n/2} \geq 2.9129^n$ obtained by an independent triple $(B_1,B_2,B_3)$ of largest sum-free sets. 
Assuming that a there is an $i\in[6]$ with $|F_i|\leq \frac 25n+\gamma n$ would imply
$|\Phi(F_1,\dots,F_r)|< 2.91^{n}$.}

\medskip

Next we prove the lemma in case the group has a unique largest sum-free set which is somewhat more complicated as the lower bound is significantly smaller in this case. 
Let $G$ be a group of even order with the unique largest sum-free set $B$ and 
let $(F_1,\dots,F_r)$ be a substantial tuple, i.e., $|\Phi(F_1,\dots,F_r)|\geq (\sqrt{r}-0.01)^n$.  
Note first that if $F_1,\dots,F_{r-1}$ of the tuple $(F_1,\dots,F_r)$ are good, then with $F_i^1=F_i\cap B$ we have
\begin{align}
\label{eq:allgood} (r^{\frac12}-0.01)^{n}\leq |\Phi(F_1,\dots, F_r)|   \leq & \,\,r^{r\gamma n} \,\,|\Phi(F_1^1,\dots, F_{r-1}^1, F_r)| \leq \,\,  r^{r \gamma n} r^{|F_r\cap B|}(r-1)^{|B\setminus F_r|}.
\end{align}
This implies that $F_r$ must have size larger than $\frac 25n+\gamma n$ for sufficiently large $n$ and hence being good as well.

Further, by the same argument as above, i.e., using  $|F_i|\leq\frac n2+2^{20}n(\log n)^{-1/45}$ for all $i\in[r]$ together with property ~\eqref{eq:n3triv}  we derive the following properties. 
\begin{claim}\label{claim:uniqueB}\mbox{}
\begin{enumerate}
\item \label{it:unique1}Let $k_4=3$ and $k_5=2$ then at least $k_r$ sets from $(F_1,\dots,F_r)$ have size larger than $\frac 25n+\gamma n$, implying that they are good,
\item \label{it:unique2} at least four sets from $(F_1,\dots,F_5)$ have size larger than $\frac n3+\gamma n$, and
\item \label{it:unique3}if  three sets from $(F_1,\dots,F_5)$ have size at most $\frac 25n+\gamma n$ then all five sets have size larger than $\frac38n+\gamma n.$ 
\end{enumerate}
\end{claim}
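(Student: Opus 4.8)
The plan is to derive all three items from the same two-sided estimate on $\sum_{i\in[r]}|F_i|$ that was used in the paragraph immediately preceding the claim. For the lower bound I would combine substantiality, $|\Phi(F_1,\dots,F_r)|\ge(\sqrt r-0.01)^n$, with~\eqref{eq:n3triv} to obtain $\sum_{i\in[r]}|F_i|\ge 3n\log_3(\sqrt r-0.01)$; a quick computation shows this is larger than $1.87n$ when $r=4$ and larger than $2.18n$ when $r=5$. For the upper bound I would use $|F_i|\le\frac n2+2^{20}n(\log n)^{-1/45}$ for every $i$ --- from~\eqref{it:container3} of Theorem~\ref{thm:container} together with Lemma~\ref{lem:supersaturation} --- supplemented by the extra size restrictions that negating the item in question imposes on some of the $F_i$. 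All the error terms that occur, namely $\gamma n$ and $2^{20}n(\log n)^{-1/45}$, are $o(n)$ and so are absorbed once $n$ is large.

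For item~\eqref{it:unique1} I would argue by contradiction. If fewer than $k_r$ of the $F_i$ exceed $\tfrac25n+\gamma n$, then for $r=4$ at least two of the four sets have size $\le\tfrac25n+\gamma n$, so $\sum|F_i|\le 2\cdot\tfrac n2+2\cdot\tfrac25n+o(n)=1.8n+o(n)$, contradicting the lower bound $>1.87n$; for $r=5$ at least four of the five are that small, so $\sum|F_i|\le\tfrac n2+4\cdot\tfrac25n+o(n)=2.1n+o(n)$, contradicting $>2.18n$. That the $k_r$ large sets are then good is exactly the observation recorded at the beginning of the proof of Lemma~\ref{lem:alllargest} (a set of size $>\tfrac25n+\gamma n$ becomes good after one application of Lemma~\ref{lem:removal} followed by Lemma~\ref{lem:3over8}). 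Item~\eqref{it:unique2} is the same in spirit: if at least two of the five $F_i$ had size $\le\tfrac n3+\gamma n$, then $\sum|F_i|\le 3\cdot\tfrac n2+2\cdot\tfrac n3+o(n)=\tfrac{13}6n+o(n)$, which falls below $2.18n$.

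For item~\eqref{it:unique3} I would fix the three sets of size $\le\tfrac25n+\gamma n$, say $F_3,F_4,F_5$, and suppose for contradiction that some $F_j$ has size $\le\tfrac38n+\gamma n$. If $j\in\{1,2\}$ this yields $\sum|F_i|\le\tfrac n2+\tfrac38n+3\cdot\tfrac25n+o(n)=\tfrac{83}{40}n+o(n)=2.075n+o(n)$; if $j\in\{3,4,5\}$ one instead improves one of the three $\tfrac25n$-bounds to $\tfrac38n$, giving $\sum|F_i|\le 2\cdot\tfrac n2+\tfrac38n+2\cdot\tfrac25n+o(n)=\tfrac{87}{40}n+o(n)=2.175n+o(n)$; both contradict the lower bound $>2.18n$. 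The point to watch is the tightness of the numerics in items~\eqref{it:unique2} and~\eqref{it:unique3}, in particular the sub-case $j\in\{3,4,5\}$: there the margin is only of order $10^{-2}n$, so this is precisely the estimate that dictates how large ``sufficiently large'' must be (one needs $2^{20}(\log n)^{-1/45}$ and $(\log n)^{-1/27}$ comfortably below $10^{-3}$). Beyond this bookkeeping the claim needs no new ideas --- it is just a sequence of weighted counting comparisons.
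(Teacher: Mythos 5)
Your proposal is correct and is exactly the argument the paper has in mind: the paper dispatches the claim with the one-line remark ``by the same argument as above, i.e., using $|F_i|\le\frac n2+2^{20}n(\log n)^{-1/45}$ together with \eqref{eq:n3triv}'', and you have supplied precisely those weighted-sum comparisons, including the observation that the case $j\in\{3,4,5\}$ of item~\eqref{it:unique3} is the tightest constraint.
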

In particular, as $k_4=3$ the lemma follows in the case $r=4$ and it is  left to prove that for 
$r=5$ there are four sets out of $(F_1,\dots,F_5)$ which have size larger than $\frac 25n+\gamma n$. 

 For a given tuple $(F_1,\dots, F_5)$ let $F_i^1=F_i\cap B$ and $F_i^0=F_i\setminus B$  and let $m_i$ be the number of elements contained in exactly $i$ sets from $(F_1^1,\dots,F_5^1)$. 
Then
\begin{align}
\label{eq:2good}
|\Phi(F_1,\dots, F_5)|\leq |\Phi(F_1^0,\dots, F_5^0)|\cdot |\Phi(F_1^1,\dots, F_5^1)| \qquad\text{and}\qquad m_1+\dots+m_5=|B|=\frac n2.
\end{align}
Note also that $ |\Phi(F_1^1,\dots, F_5^1)|\leq  |\Phi(B,\dots, B,F_i^1,\dots, F_5^1)|$, $i\in[5]$, 
since $F_j^1\subset B$, for all $j\in[5]$.

By~\eqref{it:unique1} of Claim~\ref{claim:uniqueB} we  may assume  that $F_1$ and $F_2$ are good, and for a contradiction suppose that
$|F_i|\leq \frac 25n+\gamma n$, $i=3,4,5$.
Then due to~\eqref{it:unique3} of Claim~\ref{claim:uniqueB}  
each of these $F_i$'s  has size larger than $\frac 38n + \gamma n$.
Property~\eqref{it:container3} of Theorem~\ref{thm:container}
and Lemma~\ref{lem:removal}  implies that for each such $i$ there is a sum-free set $S_i \subseteq F_i$
such that $|F_i \setminus S_i| \leq \gamma n$.
By Lemma~\ref{lem:3over8} there is a sum-free set $L_i$ containing $S_i$  such that  $|L_i\cap B|=|L_i\setminus B|=\frac n5$. Therefore
 $|F_i^0|, |F_i^1|\leq \frac n5+\gamma n$ for $i \in \{3,4,5\}$ and~(\ref{eq:n3triv}) yields 
$|\Phi(F_1^0,\dots,F_r^0)|\leq 3^{\frac{n}{5}+2\gamma n }.$

Consider now the tuple $(B,B,F_3^1,F_4^1 ,F_5^1)$ with  $f=|F_3^1|+|F_4^1|+|F_5^1|\leq \frac 35n+ 3\gamma n$ and let  $m_i$ be defined as above. Then   
$m_3+2m_4+3m_5=f$ and together with the second part of~\eqref{eq:2good} this implies  $m_4 + 2 m_5-m_2+\frac n2-f= 0$. 
Using the first part of \eqref{eq:2good} and $\sqrt{\frac{20}{27}}<\frac {8}{9}$ 
we obtain for $G$ of sufficiently large order
\begin{align*}
|\Phi(F_1,\dots, F_5)| \leq &\,\,
|\Phi(F_1^0, \dots, F^0_5)| \,\, |\Phi(B, B, F^1_3,F^1_4, F^1_5)|  \\
\leq & \,\, 3^{\frac n5+2\gamma n}~2^{m_2}~3^{m_3}~4^{m_4}~5^{m_5}\\
= &\,\, 3^{\frac n5+2\gamma n}~2^{m_4 +2 m_5 + \frac n{2}-f}~3^{f - 2m_4 -3m_5}~4^{m_4}~5^{m_5}\\
\leq &~~3^{ \frac n5+2\gamma n}~2^{\frac n{2}-f} \,\, 3^{f} \left(\frac 89\right)^{m_4}\left(\frac{20}{27}\right)^{m_5}\\
\leq& \,\,3^{ \frac n5+2\gamma n}~2^{\frac n{2}-f} \,\, 3^{f}  \, \left(\frac{8}{9}\right)^{f-\frac n2}\\
\leq &\,\, 3^{ \frac n5+2\gamma n} \left(\frac{4}{3}\right)^{f}  \left(\frac{3}{2}\right)^{n}
\,\, < \,\, 2.23^{n}.
\end{align*}
This contradicts the lower bound on $|\Phi(F_1,\dots, F_5)|$ and we conclude that one set among $F_3, F_4, F_5$, say $F_3$, has size larger than 
$\frac 25n+\gamma n$  and is therefore good.

Finally we show that $F_4$ or $F_5$ is good, assuming that $F_1, F_2, F_3$ are. Let
 $f_0=|F_4^0\cap F_5^0|$ and note that  $|\Phi(F_1^0,\dots,F_5^0)|\leq 5^{4\gamma n}2^{f_0}$ due to the goodness of $F_1$, $F_2$ and $F_3$. 
Further, consider $m_1,\dots,m_5$ for the tuple  $(B,B,B,F_4^1,F_5^1)$. 
Then with $f=|F_4|+|F_5|$ we have $m_1=m_2=0$ and
\begin{align}
\label{eq:F4m}
 m_3+m_4+m_5=\frac n2, \quad m_4+2m_5=|F_4^1|+|F_5^1|,\quad\text{and}\quad |F_4^1|+|F_5^1|+2f_0\leq f.
\end{align}
Using  $\sqrt{5/3}< \frac 43$ we obtain 
\begin{align}
\begin{split}
\label{eq:F4}
|\Phi(F_1,\dots, F_5)|   
\leq &\,\, 
|\Phi(F_1^0, \dots, F_5^0)| \,\, |\Phi(B,B,B,F^1_4, F_5^1)|  \\
\leq &\,\, 5^{4\gamma n} 2^{f_0} \,\, 3^{\frac n2 - m_4 - m_5} 
\,\, 4^{m_4} \,\,5^{m_5}\\
\leq&\,\, 5^{4 \gamma n} 2^{f_0} \,\, 3^{\frac n2}  \,\, \left(\frac 43\right)^{f -2f_0}\\
\leq&~~5^{4 \gamma n} \left(\frac 98\right)^{f_0} \,\, 3^{\frac n2}  \,\, \left(\frac 43\right)^{f}.
\end{split}
\end{align}	
This easily implies that $F_4$ or $F_5$ must have size larger than $\frac38n+\gamma n$.
Indeed, otherwise $f\leq\frac 34n+2\gamma n$ and~\eqref{it:unique2} of Claim~\ref{claim:uniqueB} implies that either $F_4$ or $F_5$ has size larger than $\frac n3 + \gamma n$.
By~\eqref{it:container2} of Theorem~\ref{thm:container} and
Lemma~\ref{lem:removal} one can then make, say, $F_4$ sum-free by removing at most $\gamma n$ elements. 
This shows that $f_0\leq|F_4^0|\leq \frac n4+\gamma n$ as the largest sum-free set in $B^0$ has size at most $|B_0|/2=n/4$.
Plugging these bounds for $f$ and $f_0$ into \eqref{eq:F4}, however, yields 
$|\Phi(F_1,\dots, F_5)| <2.22^n$ for $G$ with sufficiently large order.  This contradicts the lower bound, hence, we can assume that $F_4$ has size larger than $\frac38n + \gamma n$. 

Finally assume that $|F_4|, |F_5|\leq \frac 25 n+\gamma n$. Then $f\leq \frac 45+2\gamma n$ and  by using~\eqref{it:container3} 
of Theorem~\ref{thm:container} with Lemma~\ref{lem:removal}  and Lemma~\ref{lem:3over8} 
we further conclude that there exist  sum-free sets $S_4$ and $L_4$ with $S_4\subset L_4$ such that 
$|F_4 \setminus S_4| \leq  \gamma n$ and $|L_4\cap B|=|L_4\setminus B|=\frac n5$. This implies $f_0\leq |F_4^0|\leq |L_4\cap B^0|+\gamma n\leq\frac n5+\gamma n$ 
and we obtain from \eqref{eq:F4} for $G$ of sufficiently large order that $|\Phi(F_1,\dots, F_5)|<2.233^n$. This again contradicts the lower bound and
we conclude that $F_4$ is good and so is $F_5$ as shown in the beginning. This finishes the proof of the lemma.
\end{proof}

\section{Exact results}
\label{sec:exact}
In this section we prove  Theorem~\ref{thm:main23} and Theorem~\ref{thm:main45} based on Theorem~\ref{thm:stability23},~\ref{thm:stability4} and~\ref{thm:stability5}.

\begin{proof}[Proof of Theorem~\ref{thm:main23} and  proof of Theorem~\ref{thm:main45} in case that $G$ has a unique largest sum-free set]

We show the proof of Theorem~\ref{thm:main23} for $r=3$ only, the proof for $r=2$ and that
of Theorem~\ref{thm:main45}   in case that $G$ has a unique largest sum-free set follow the same line.

Let $G$ be a type~I($q$) group of sufficiently large order $n$
and suppose that $A \subset G$ maximizes the number of sum-free $3$-colorings. 
Then $\kappa_3(A)  \geq 3^{\mu(G)}$ as this bound is achieved by any largest sum-free set in $G$. Hence, we have $|A|\geq \mu(G)$.
Moreover, with $\epsilon =  \frac1{80(q+1)}$ Theorem~\ref{thm:stability23} implies that  there 
exists a largest sum-free set $B$ such that $|A \setminus B| < \epsilon n$.

Assume that there is an $x\in A\setminus B$ otherwise $A=B$ and the theorem follows. 
If $|G|$ is odd then by Lemma~\ref{lem:proof23exact} 
there are at least $\frac{n}{2q}-1$  pairs 
$\{a,b\}\in\binom{B}{2}$ such that $x=a+b$. 
If $|G|$ is even then by Lemma~\ref{lem:matching} (with $t=1$) for each $a\in B$ there exists $b\in B$ such that $x=a+b$, where possibly $a=b$.
If $\phi(x)=i \in[3]$ then $\phi$ can be extended  to at most 8 sum-free colorings of $\{a,b\}$ if $a\neq b$ and $a,b\in A$, to at most~2 
if $a=b$, $a\in A$ (hence $4<8$ ways for two such pairs  $a=b$ and $a'=b'$), and to at most 3 if $a\not\in A$ or $b\not\in A$.
Using $|A\setminus B|<\eps n$  and $\left(1-\frac12\log_38\right)>\frac{1}{20}$ we conclude therefore
\begin{align*}
\kappa_3(A)<  3^{\eps n}\cdot8^{\frac{n}{2q}} \cdot3^{\left(|A|-\frac n{q}+\epsilon n+2\right)}< 3^{\mu(G)+4\eps n-\frac n{20q}}=3^{\mu(G)}.
\end{align*}
 This contradicts the lower bound. Thus, $A= B$ and the theorem follows.
\end{proof}

\begin{proof}[Proof of Theorem~\ref{thm:main45}] It is left to prove Theorem~\ref{thm:main45} in the case that $G$ has at least two largest sum-free sets.
The proofs are quite involved but similar for $r=4$ and for $r=5$. Therefore we show the case $r=5$ only.

Suppose that $A\subset G$ maximizes the number of sum-free $5$-colorings.
Let $\gamma =3(\log n)^{-1/27}$ and let $\mathcal{F}$ be a container family of $A$ as in Theorem~\ref{thm:container}.
If $A$ maximizes the number of sum-free $5$-colorings, then 
$|\kappa_5(A)| \geq 6^{n/2}$ due to Proposition~\ref{prop:lowerbounds}. 
Let $(F_1,\dots, F_5)$ be the quintuple maximizing $|\Phi(F'_1,\dots,F'_5)|$ over all $(F'_1,\dots, F'_5)\in\cF^5$.
From \eqref{eq:PhiInContainer1} of Observation~\ref{obs:PhiInContainer}
we infer $6^{n/2}\leq\kappa_5(A)\leq |\cF|^5|\Phi(F_1,\dots, F_5)|$ and~(\ref{it:container1}) of Theorem~\ref{thm:container}  implies
\begin{align}\label{eq:substantial5}\log_6|\Phi(F_1,\dots,F_5)|\geq {\frac n2-2n(\log n)^{-1/18}}.\end{align}
Hence, we are in the position to apply  Theorem~\ref{thm:stability5} to $(F_1,\dots, F_5)$ which entails that there are largest sum-free sets $B_1,B_2,B_3$ in $G$
which satisfy the following.
\begin{enumerate}
 \item \label{it:stability1}  $B_3= B_1 \bigtriangleup B_2$ and  there is a function $f:[5]\to[3]$ such that 
$|F_j\setminus B_{f(j)}|<\gamma n$ for all $j\in[5]$ and
$\cB=(B_{f(1)},\dots, B_{f(5)})$ consists of one $4$-atom, and two $3$-atoms, or
 \item\label{it:stability2} there are four distinct largest sum-free sets $B_4,\dots, B_7$ contained in 
 $B_1\cup B_2\cup B_3$ and a function $f:[5]\to [7]$ such that 
$|F_j\setminus B_{f(j)}|<\gamma n$ for all $j\in[5]$ and
$\cB=(B_{f(1)},\dots, B_{f(5)})$ consists of two $2$-atoms, four $3$-atoms, and one $4$-atom. 
\end{enumerate}

\begin{claim}\label{claim:AcontU}
If \eqref{it:stability1} applies then $A\subseteq B_1\cup B_2$ and if \eqref{it:stability2} applies then we have $A\subseteq B_1\cup B_2\cup B_3$.
\end{claim}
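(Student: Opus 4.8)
The plan is a contradiction argument. Suppose $x\in A\setminus U$, where $U=B_1\cup B_2$ if \eqref{it:stability1} holds and $U=B_1\cup B_2\cup B_3$ if \eqref{it:stability2} holds. The goal is to show that $|\Phi(F_1,\dots,F_5)|$ is exponentially smaller than the quantity $6^{n/2}|\cF|^{-5}=6^{n/2}e^{-o(n)}$ forced by $\kappa_5(A)\ge 6^{n/2}$ (see~\eqref{eq:substantial5}), which is the desired contradiction. Write $C'(y)=\{i\in[5]:y\in F_i\}$ and $C(y)=\{i\in[5]:y\in B_{f(i)}\}$.

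First I would pin down the location of $A$. Since $A=F_1\cup\dots\cup F_5$, since $|F_i\setminus B_{f(i)}|<\gamma n$, and since every $B_{f(i)}$ lies in $U$ (using $B_3=B_1\triangle B_2\subseteq B_1\cup B_2$ in the first case and Corollary~\ref{remark:reduction} in the second), we get $A\subseteq U\cup E$ with $E:=\bigcup_i(F_i\setminus B_{f(i)})$, $|E|<5\gamma n$, and in particular $x\in E$. By Observation~\ref{obs:PhiInContainer}, $|\Phi(F_1,\dots,F_5)|\le\prod_{y\in A}|C'(y)|$, while $\prod_{y\in U}|C(y)|=|\Phi(B_{f(1)},\dots,B_{f(5)})|=6^{n/2}$ (the atom structure supplied by Theorem~\ref{thm:stability5} makes the last equality the computation carried out in the proof of Lemma~\ref{lem:OptStructure}). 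The \emph{key preliminary step}, which I expect to be the main obstacle, is to bootstrap the crude inclusion $A\subseteq U\cup E$ into the statement that $A$ barely differs from $U$: since $C'(y)\subseteq C(y)$ for all $y\notin E$ and $|C(y)|\ge2$ for every $y\in U$, comparing $\prod_{y\in A}|C'(y)|\ge 6^{n/2}e^{-o(n)}$ with $\prod_{y\in U}|C(y)|=6^{n/2}$ forces all but $o(n)$ elements $y\in U$ to lie in $A$ and to satisfy $C'(y)=C(y)$. Call the set of such elements the \emph{nice} set $N$, so $|N|\ge|U|-o(n)$.

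Next comes the structural/involution step. As $x$ avoids $B_1,B_2$ (hence every $B_j$ at issue), $x$ lies in the $0$-atom of $\cB=(B_1,B_2)$, resp.\ $(B_1,B_2,B_3)$, so Lemma~\ref{lem:matching} shows that $\sigma(b):=x-b$ is an atom-preserving involution of $U$ with $b+\sigma(b)=x$ for all $b\in U$. Fix a colour $i\in[5]$ and let $W_i\subseteq U$ be the union of the atoms on which colour $i$ is available, i.e.\ those contained in $B_{f(i)}$. Here I would invoke Lemma~\ref{lem:intersectionlarg} and Corollary~\ref{remark:reduction} for the uniform fact that $|W_i|=n/2$ in \emph{both} cases: each $B_{f(i)}$ meets exactly two of the three atoms of size $n/4$ in case \eqref{it:stability1}, and exactly four of the seven atoms of size $n/8$ in case \eqref{it:stability2}. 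Since $\sigma$ preserves atoms it preserves $W_i$, so restricting $\sigma$ to the $\sigma$-invariant set $N_i:=\{y\in N\cap W_i:\sigma(y)\in N\cap W_i\}$, which has size $\ge n/2-o(n)$, decomposes all but $o(n)$ of $W_i$ into at least $n/4-o(n)$ orbits $\{y,\sigma(y)\}$, each providing a Schur triple $(y,\sigma(y),x)$ with both non-$x$ endpoints in $F_i$.

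Finally, the count. A sum-free colouring $\phi$ of $A$ with $\phi(x)=i$ cannot colour any such orbit monochromatically with $i$ (nor, for a fixed point $y$ of $\sigma$, colour $y$ with $i$). Choosing for each $y\in A$ its colour uniformly from $C'(y)$ yields the uniform measure on colourings with $\phi^{-1}(j)\subseteq F_j$; the orbit events have pairwise disjoint supports, hence are independent, and each is ``bad'' with probability at least $1/25$. Therefore the number of sum-free colourings of $A$ with $\phi(x)=i$ is at most $\bigl(\prod_{y\in A}|C'(y)|\bigr)(24/25)^{n/4-o(n)}$, and summing over the five colours,
\[
|\Phi(F_1,\dots,F_5)|\ \le\ 5\Bigl(\prod_{y\in A}|C'(y)|\Bigr)(24/25)^{n/4-o(n)}\ \le\ 6^{n/2}\,e^{-\Omega(n)},
\]
contradicting $|\Phi(F_1,\dots,F_5)|\ge 6^{n/2}e^{-o(n)}$ once $n$ is large. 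Hence $A\setminus U=\emptyset$, proving the claim. The only genuinely delicate point is the bootstrapping step showing $|U\triangle A|=o(n)$ and $C'=C$ off an $o(n)$ set; everything afterwards is a matching/independence estimate, aided by the fortunate coincidence that the availability region $W_i$ of every colour has size exactly $n/2$ in both structural cases.
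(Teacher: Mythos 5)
Your proposal is correct, and it rests on exactly the same central idea the paper uses: Lemma~\ref{lem:matching} gives the atom-preserving involution $b\mapsto x-b$ (note that $x\notin U$ lies in the zero-atom), so any color assigned to $x$ forces a Schur constraint on every orbit. Where you diverge is in the bookkeeping. The paper skips your ``bootstrap'' step entirely: it works on a \emph{single} $3$-atom $\cB(\veceps)\subset B_{f(j)}$, pairs its $n/8$ elements via the involution, and observes that a pair contributes at most $8$ out of $9$ colorings, a fixed point at most $2$, and a pair with a missing or container-restricted element at most $3<8$. Because every case is bounded by $8$ per pair \emph{regardless} of how much of the atom lies in $A$ or in $E$, the bound $8^{n/16}$ is unconditional and the product $5^{5\gamma n}\cdot 2^{n/4}\cdot3^{3n/8}\cdot4^{n/8}\cdot8^{n/16}=5^{5\gamma n}6^{n/2}(\sqrt8/3)^{n/8}$ comes out directly, with no need to first establish that $A$ is close to $U$ or that $C'(y)=C(y)$ off an $o(n)$ set. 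Your version reorganizes this as ``base count times survival probability'': the base count $\prod_y|C'(y)|$ is only close to $6^{n/2}$ if $A$ nearly equals $U$, which is why you are forced into the bootstrap that you correctly flag as the delicate step. The bootstrap is a valid deduction from $\prod_y|C'(y)|\ge|\Phi(F_1,\dots,F_5)|\ge 6^{n/2}e^{-o(n)}$ and $|C(y)|\ge 2$, so your proof goes through; and by working with all four atoms of $B_{f(i)}$ (i.e.\ $|W_i|=n/2$, which you rightly note is the same in both cases) you extract $\ge n/4-o(n)$ orbits and a slightly better exponent, $(24/25)^{n/4}$ versus the paper's $(8/9)^{n/16}$. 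What the paper's direct pair-by-pair bound buys is robustness: missing elements only help, so the bootstrap can be omitted and the argument is shorter. What your bootstrap buys is a more modular presentation and structural information about $A$ that might be reusable, at the cost of a preliminary step that the direct argument avoids.
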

\begin{proof}
The argument is similar to the one in the proof of Theorem~\ref{thm:main23}, i.e., we show that the existence of an $x\in A\setminus (B_1\cup B_2)$ 
or $x\in A\setminus (B_1\cup B_2\cup B_3)$, respectively,  gives a substantial restriction on how the elements of $A$ may be colored. This then leads to a contradiction to the bound on $\kappa_5(A)$.
As the proofs of these two cases are similar, we show the second case only.

 Assume that there is an $x\in A \setminus (B_1\cup B_2 \cup B_3)$ of which there are at most $5\gamma n$. As $B_{f(i)}\in \{B_1,B_2,B_3\}$ for all $i\in[5]$ the atoms
of $\cB$ and of $\cB'=(B_1,B_2,B_3)$ are the same and each  has size $n/8$ due to Corollary~\ref{remark:reduction}.
Suppose  $\phi(x)=j \in [5]$ so that $x\in F_j$ and let $\cB(\veceps)$ be a $3$-atom contained in $B_{f(j)}$. This
indeed exists as $B_{f(j)}$ consists of four atoms and $\mathcal{B}$ consists of  seven non-zero atoms,  
four of which are 3-atoms. 
By applying Lemma~\ref{lem:matching} to  $\mathcal{B}'$ we infer that
for each $a\in \cB(\veceps)$ there is a $b\in \cB(\veceps)$ such that $x=a+b$.
Hence $\phi$ can be extended  to at most 8 sum-free colorings of $\{a,b\}$ if $a\neq b$ and $a,b\in F_j$, to at most~2 
if $a=b$, $a\in F_j$ (hence $4<8$ ways for two such pairs  $a=b$ and $a'=b'$), and at most 3 if $a\not\in F_j$ or $b\not\in F_j$.
Together with Observation~\ref{obs:PhiInContainer} and the atom structure of $\cB$ this yields 
\begin{align} \label{eq:upbound}
|\Phi(F_1, \ldots, F_5)| \leq  5^{5\gamma n}\cdot 2^{n/4} \cdot 3^{3n/8} \cdot 4^{n/8} \cdot 8^{n/16}= 5^{5\gamma n}\cdot 6^{n/2}\cdot\left(\frac{\sqrt 8}3\right)^{n/8},
\end{align}
which is  a contradiction to \eqref{eq:substantial5}.
\end{proof}
We next establish that $B_1\cup B_2\subset A$ in the first case and $B_1\cup B_2\cup B_3\subset A$ in the second case. 
The following distinction of the colorings will be crucial for this purpose.
 \begin{definition}\label{def:goodbadcol}
A coloring $\varphi \in \Phi_5(A)$ is called \emph{good}  
 if for each $i\in[5]$ we have $\phi^{-1}(i)\subset L_i$ for some largest sum-free set $L_i$ of $G$, and
\begin{itemize}
\item the tuple $\mathcal{L}=(L_1,\dots,L_5)$ consists of one $4$-atom and two $3$-atoms, or
\item the tuple $\mathcal{L}=(L_1,\dots,L_5)$ consists
of two 2-atoms, four 3-atoms and one 4-atom.
\end{itemize}
 Otherwise $\phi$ is called a \emph{bad} coloring.
\end{definition}
Note that Definition~\ref{def:goodbadcol} 
distinguishes the two cases \eqref{it:stability1} and \eqref{it:stability2} 
as the number of atoms in $(L_1,\dots,L_5)$ is three in the first case and seven in the second case. 

Assume for contradiction that $x\in (B_1\cup B_2)\setminus A$ 
or $x\in (B_1\cup B_2\cup B_3)\setminus A$, respectively.
Crucial about the definition is that  any good coloring $\phi$ of $A$ can be extended to at least two sum-free $5$-colorings  of $A\cup\{x\}$.
Indeed, as $x$ belongs to some $k$-atom $\mathcal{L}(\veceps)$, $\veceps\in\{0,1\}^5$, of 
$\mathcal{L}=(L_1,\ldots,L_5)$ for $k\geq 2$,
the coloring  $\varphi$ can be extended 
to colorings of $A\cup\{x\}$ by assigning to $x$ one of the $k$ colors associated with $\mathcal{L}(\veceps)$.   
As $\phi^{-1}(i)\subset L_i$ for all $i\in[5]$
these  extensions are  sum-free.
Therefore, assuming the existence of  $x$ implies
\[|\Phi_5(A\cup\{x\})|\geq 2 |\{ \varphi \in \Phi_5(A): \varphi \text{ is good}\}|.\]

As good and bad colorings of $A$ partition $\Phi_5(A)$ it is sufficient to show the following.
\begin{claim}\label{claim:goodbadcol}
For $n$ sufficiently large
\[|\{ \varphi \in \Phi_5(A): \varphi \text{ is good}\}|> 6^{n/2}2^{-24\gamma n} >  \left(3\cdot2^{7/2}+\frac1{50}\right)^{n/4}>|\{ \varphi \in \Phi_5(A): \varphi \text{ is bad}\}|.\] 
In particular, if \eqref{it:stability1} holds then $B_1\cup B_2\subseteq A$ and if 
\eqref{it:stability2} holds then $B_1\cup B_2\cup B_3\subseteq A$.
\end{claim}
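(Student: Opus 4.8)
The plan is to establish the chain of inequalities in Claim~\ref{claim:goodbadcol} by bounding the number of good and bad colorings separately, using the container machinery already in place. For the lower bound on good colorings, I would start from the tuple $(F_1,\dots,F_5)$ of Theorem~\ref{thm:stability5}, which we have shown satisfies $\log_6|\Phi(F_1,\dots,F_5)|\geq n/2-2n(\log n)^{-1/18}$. Using $|F_j\setminus B_{f(j)}|<\gamma n$ for all $j$, I would restrict each $F_j$ to $F_j^1 = F_j\cap B_{f(j)}$; a coloring $\phi\in\Phi(F_1^1,\dots,F_5^1)$ has $\phi^{-1}(j)\subseteq B_{f(j)}$, hence is automatically a good coloring of its support (with $L_j = B_{f(j)}$, since $\cB=(B_{f(1)},\dots,B_{f(5)})$ has exactly the required atom structure). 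The loss from passing to $F_j^1$ is at most a factor $5^{5\gamma n}$, and one checks that every such $\phi$ is the restriction of a good coloring of $A$ since $A\subseteq B_1\cup B_2$ (resp. $A\subseteq B_1\cup B_2\cup B_3$) by Claim~\ref{claim:AcontU}, so we may color any element of $A$ not yet colored using any color whose associated largest sum-free set contains it. This gives $|\{\varphi\text{ good}\}|\geq 6^{-o(n)}|\Phi(F_1,\dots,F_5)|\cdot 5^{-5\gamma n} > 6^{n/2}2^{-24\gamma n}$ after absorbing the $(\log n)^{-1/18}$ term into the $\gamma n=3n(\log n)^{-1/27}$ slack. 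The middle inequality $6^{n/2}2^{-24\gamma n}>(3\cdot 2^{7/2}+\tfrac1{50})^{n/4}$ is a routine numerical check, since $6^{n/2}=(3\cdot2^{7/2})^{n/4}\cdot(36/(3\cdot2^{7/2}))^{n/4}$ and $36>3\cdot 2^{7/2}\approx 33.94$, so the exponential gain dominates the subexponential factor $2^{-24\gamma n}$.

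The harder direction is the upper bound on bad colorings, and here I would again invoke the container family: $\Phi_5(A)=\bigcup_{(F_1',\dots,F_5')\in\cF^5}\Phi(F_1',\dots,F_5')$, and any bad coloring lies in some $\Phi(F_1',\dots,F_5')$. The key point is that a tuple $(F_1',\dots,F_5')$ can only contribute bad colorings if it does \emph{not} have the extremal structure — more precisely, if it were to satisfy $|\Phi(F_1',\dots,F_5')|>5.9^{n/2}$ then Theorem~\ref{thm:stability5} would force it into case \eqref{it:stability1} or \eqref{it:stability2}, and then (using $A\subseteq B_1\cup B_2\cup B_3$ from Claim~\ref{claim:AcontU} and that the $B_i$'s attached to a substantial tuple are essentially forced, as in the proof of Lemma~\ref{lem:OptStructure}) every coloring in $\Phi(F_1',\dots,F_5')\cap\Phi_5(A)$ would in fact be good. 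Hence every tuple contributing a bad coloring has $|\Phi(F_1',\dots,F_5')|\leq 5.9^{n/2}$, and summing over all at most $|\cF|^5\leq 2^{5n(\log n)^{-1/18}}$ tuples gives $|\{\varphi\text{ bad}\}|\leq 2^{5n(\log n)^{-1/18}}\cdot 5.9^{n/2}$. Since $5.9^{n/2}<(3\cdot 2^{7/2}+\tfrac1{50})^{n/4}$ — because $5.9^2=34.81$ while $(3\cdot 2^{7/2}+\tfrac1{50})<33.97$, wait, this needs $5.9^{n/2}=(34.81)^{n/4}$ versus $(33.97)^{n/4}$, so one actually needs the slightly stronger threshold — I would instead run the bad-coloring count with the sharper bound available: tuples not of extremal type satisfy $|\Phi|\leq (3\cdot 2^{7/2})^{n/4}$ with room to spare from the case analysis in Lemma~\ref{lem:OptStructure} (the second-largest value there is $(2\cdot 16)^{n/8}=2^{5n/8}<(3\cdot 2^{7/2})^{n/4}$), so after multiplying by $|\cF|^5=2^{o(n)}$ one still beats $(3\cdot 2^{7/2}+\tfrac1{50})^{n/4}$ for large $n$.

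I expect the main obstacle to be the precise bookkeeping in the bad-coloring bound: one must argue carefully that \emph{every} coloring arising from a substantial tuple is good, which requires combining the stability statement (Theorem~\ref{thm:stability5}) with the rigidity of the atom structure from Lemma~\ref{lem:OptStructure} and the containment $A\subseteq B_1\cup\dots$ from Claim~\ref{claim:AcontU}, and then checking that the non-substantial tuples genuinely lose an exponential factor relative to $6^{n/2}$ rather than merely a subexponential one. The numerical thresholds ($3\cdot 2^{7/2}\approx 33.94$, $6^2=36$, $5.9^2=34.81$, $2^{5/2}=5.657$, the second-best atom configurations) all need to be lined up so that the subexponential container overhead $2^{5n(\log n)^{-1/18}}$ is comfortably absorbed; none of this is deep, but it is where an error would most easily creep in. The final two sentences of the claim then follow immediately: since there are strictly more good than bad colorings, a vertex $x$ outside $A$ whose addition would, via the "each good coloring extends in $\geq 2$ ways" observation, yield $|\Phi_5(A\cup\{x\})|\geq 2|\{\varphi\text{ good}\}|>|\{\varphi\text{ good}\}|+|\{\varphi\text{ bad}\}|=\kappa_5(A)$, contradicting the maximality of $A$; hence $B_1\cup B_2\subseteq A$ in case \eqref{it:stability1} and $B_1\cup B_2\cup B_3\subseteq A$ in case \eqref{it:stability2}.
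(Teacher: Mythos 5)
Your lower bound on the good colorings and the middle numerical inequality are correct and match the paper's approach: restrict each $F_j$ to $F_j^1=F_j\cap B_{f(j)}$, observe that $\sum_j|B_{f(j)}\setminus F_j|<12\gamma n$ (else $|\Phi(F_1,\dots,F_5)|$ would already be too small), count colorings $\varphi$ with $\varphi^{-1}(j)\subset F_j\cap B_{f(j)}$ to get $|\{\text{good}\}| > 2^{n/4}3^{n/2}4^{n/8-12\gamma n}=6^{n/2}2^{-24\gamma n}$, and compare $36 > 3\cdot 2^{7/2}$.

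The bad-coloring bound, however, rests on a claim that does not hold: that if a container tuple $(C_1,\dots,C_5)$ satisfies the stability conclusion (there exist $L_{g(i)}$ with $|C_i\setminus L_{g(i)}|<\gamma n$ and $\cL$ having the extremal atom pattern), then \emph{every} $\varphi\in\Phi(C_1,\dots,C_5)$ is good. A coloring in $\Phi(C_1,\dots,C_5)$ only satisfies $\varphi^{-1}(i)\subseteq C_i$; since $C_i$ may contain up to $\gamma n$ elements outside $L_{g(i)}$, a coloring that assigns color $k$ to some $x\in C_k\setminus L_{g(k)}$ can fail to have $\varphi^{-1}(k)$ inside \emph{any} largest sum-free set, and such a $\varphi$ is bad. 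So substantial container tuples do contribute bad colorings, and the dichotomy ``substantial $\Rightarrow$ all good / non-substantial $\Rightarrow$ few colorings'' breaks down. This is exactly where the paper's proof does extra work you omit: it fixes the container tuple $(C_1,\dots,C_5)$ maximizing the bad count, applies Theorem~\ref{thm:stability5}, and then observes that any bad $\varphi$ from it must exhibit a \emph{bad pair} $(x,k)$ with $x\in C_k\setminus L_{g(k)}$ and $\varphi(x)=k$. Fixing the worst such pair $(y,\ell)$, it picks a $3$-atom $\cL(\veceps)\subset L_{g(\ell)}$ and invokes Lemma~\ref{lem:matching} to pair elements $a,b$ of that atom with $a+b=y$; the constraint $\varphi(y)=\ell$ forbids both $a,b$ from getting color $\ell$, which replaces $3^{n/8}$ by at most $8^{n/16}$ on that atom and yields the exponential saving $(\sqrt 8/3)^{n/8}$ over $6^{n/2}$. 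Your proposal has no analogue of this matching step, so the bad-coloring bound never dips below $6^{n/2}$. You do flag the worry in your final paragraph, but the proposed patch — invoking the ``second-largest value'' in Lemma~\ref{lem:OptStructure} — concerns tuples of exact largest sum-free sets, not containers, and still presupposes the false dichotomy above. Your numerical observation that $5.9^{n/2}=(34.81)^{n/4}$ exceeds $(3\cdot 2^{7/2}+\tfrac1{50})^{n/4}$ is sharp and shows the crude container count cannot close the gap on its own, which is an additional reason the matching/pairing argument is indispensable here.
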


\begin{proof} 
As the proofs for the two cases are similar we present the proof for the second case only, i.e., when $A \subset B_1\cup B_2\cup B_3$.
Recall that $|F_i\setminus B_{f(i)}|<\gamma n$ and $\cB=(B_{f(1)},\dots, B_{f(5)})$ consists of two $2$-atoms, four $3$-atoms and one $4$-atom, each of which has size $n/8$. We deduce  that
$\sum_{i\in[5]}|B_{f(i)}\setminus F_i|<12 \gamma n$  since  the following contradiction to \eqref{eq:substantial5} would arise otherwise:
\[6^{n/2-2n(\log n)^{-1/18}}\leq |\Phi(F_1,\dots,F_5)|< 5^{5\gamma n}2^{n/4-12\gamma n}3^{n/2}4^{n/8}<6^{n/2-\gamma n}.\]
Due to the atom structure of $\cB$ note  that  $\phi\in\Phi_5(A)$ is  
good if  $\phi^{-1}(i)\subset  F_i\cap B_{f(i)}$ holds for all $i\in[5]$. Hence
 \begin{align}
 \label{eq:boundgood}
|\{ \varphi \in \Phi_5(A): \varphi \text{ is good}\}| > 2^{n/4}3^{n/2} 4^{n/8-12 \gamma n}=6^{n/2}2^{-24\gamma n}.
 \end{align}
 
 We now turn to bound the number of bad colorings of $A$. 
 Suppose that $(C_1,\dots, C_5)\in\cF^5$ maximizes $|\{ \varphi \in \Phi(F'_1, \dots, F'_5): \varphi \text{ is bad}\}|$ over all $(F'_1, \dots, F'_5)\in\cF^5$.
By~\eqref{eq:PhiInContainer1} of Observation~\ref{obs:PhiInContainer} and $\log_2|\cF|\leq n(\log n)^{-1/18}$ 
due to~\eqref{it:container1} of Theorem~\ref{thm:container} we have
\begin{align}
\begin{split}
\label{eq:boundbad}
|\{ \varphi \in \Phi_5(A): \varphi \text{ is bad}\}|& \leq\sum_{(F_1,\dots,F_5)\in\cF^5} \left|\{ \varphi \in \Phi(F_1, \ldots, F_5): \varphi \text{ is bad}\}\right|\\
&\leq  2^{5n(\log n)^{-1/18}} \left|\{ \varphi \in \Phi(C_1, \ldots, C_5): \varphi \text{ is bad}\}\right|.
\end{split}
\end{align}

Therefore suppose that $|\Phi(C_1,\dots, C_5)|\geq \left(3\cdot2^{7/2}+\frac1{100}\right)^{n/4}$ otherwise the claim follows. Theorem~\ref{thm:stability5} 
then applies and there are largest sum-free sets $L_1, L_2, L_3$ and  a $g:[5]\to [3]$ or $g:[5]\to [7]$
such that \eqref{it:stability51} or \eqref{it:stability52} of Theorem~\ref{thm:stability5} holds with $F_i,B_i, f,\cB$ replaced by  $C_i,L_i,g$ and $\cL=(L_{g(1)},\dots, L_{g(5)})$.
As $C_1\cup\dots \cup C_5=A=F_1\cup \dots\cup F_5$, the sets 
$L_1,L_2,L_3$ must be contained in $B_1\cup B_2\cup B_3$ (i.e., $L_i\in\{B_1,\dots ,B_7\}$) 
and no $L_i$, $i\in[3]$, is contained in the union of the other two.
Hence, case~\eqref{it:stability52} of Theorem~\ref{thm:stability5} applies to $(C_1, \ldots, C_5)$ and  
$\cL$  consists of two $2$-atoms, four $3$-atoms and  one $4$-atom.
In particular, a $\phi \in \Phi(C_1, \ldots,C_5)$ satisfying $\phi^{-1}(i) \subset L_{g(i)}$ for all $i \in [5]$ is a good
coloring and   a bad 
$\phi\in\Phi(C_1, \dots, C_5)$  must  therefore exhibit a $k\in[5]$ and an  $x\in C_k\setminus  L_{g(k)}$ such that $\phi(x)= k$. 
Call  $(x,k)$ a bad pair and for such  pair let $\Phi(C_1, \dots, C_5|x \mapsto k)$ be 
the set of all (bad)   $\varphi \in \Phi(C_1, \dots, C_5)$  with $\varphi(x) =k$. Suppose $(y,\ell)$ maximizes $|\Phi(C_1, \dots, C_5|x \mapsto k)|$ over all bad pairs $(x,k)$, then
an argument as in the proof of Claim~\ref{claim:AcontU} will show that  $|\Phi(C_1, \dots, C_5|y \mapsto \ell)|$ is small. 

Indeed, fix a $3$-atom $\cL(\veceps)$ in $L_{g(\ell)}$ which exists since $L_{g(\ell)}$ consists of four atoms and $\cL$ consists of seven atoms, four of which are $3$-atoms.
By Lemma~\ref{lem:matching}  for each $a\in \cL(\veceps)$ there is a $b\in \cL(\veceps)$ such that $y=a+b$.
Coloring $y$ with $\ell$ therefore extends to at most 8 sum-free colorings of $\{a,b\}$ if $a\neq b$ and $a,b\in C_\ell$, to at most~2 
if $a=b$, $a\in C_\ell$ (hence $4<8$ ways for two such pairs  $a=b$ and $a'=b'$), and to at most 3 if $a\not\in C_\ell$ or $b\not\in C_\ell$. Hence,
as  $|C_i\setminus L_{ g(i)}|<\gamma n$, $i\in[5]$, we have 
\begin{align*}
|\{ \varphi \in \Phi(C_1, \dots,C_5): \varphi \text{ is bad}
\}|  &\leq {5\gamma n} \cdot  |\Phi(C_1, \dots, C_5|y \mapsto \ell)|\\
&\leq {5\gamma n} \cdot 5^{5 \gamma n}   \cdot 2^{n/4} \cdot 3^{3n/8} \cdot 4^{n/8} \cdot 8^{n/16}\\
&= {5\gamma n} \cdot 5^{5\gamma n}\cdot 6^{n/2}\cdot\left(\frac{\sqrt 8}3\right)^{n/8}<5.93^{n/2}< \left(3\cdot2^{7/2}+\frac1{100}\right)^{n/4}
\end{align*}
for  $n$ sufficiently large. In view of \eqref{eq:boundbad} the claim follows.
\end{proof}

We have established that  
either $A=B_1\cup B_2$  for an independent $(B_1,B_2)$ or $A=B_1\cup B_2\cup B_3$ for an independent $(B_1,B_2,B_3)$.
Moreover, the proof shows that the number of colorings in $\Phi_5(A)$ is largely dominated by the good colorings.

Call a quintuple $\cD=(D_1,\dots, D_5)$ of  largest sum-free sets in $A$ 
\emph{substantial} if $\cD$ consists of $2$-atoms, four $3$-atoms and one $4$-atom. By definition a coloring $\phi\in\Phi_5(A)$ is good if and only if 
$\phi\in \Phi(\cD)$ for some substantial $\cD$. Further, if $\cD$ and $\cD'$ are two distinct substantial quintuple, then $|\Phi(\cD)\cap \Phi(\cD')|=o(|\Phi(\cD)|)$, i.e., most of the 
good colorings of $A$ are assigned to a substantial $\cD$ in a unique way.
Indeed,  a $\phi\in\Phi(\cD)\setminus\Phi(\cD')$  
if for every $k$-atom $\cD(\veceps)$, $k\in\{2,3,4\}$, all $k$ colors are represented in $\cD(\veceps)$ under $\phi$, i.e., $|\phi(\cD(\veceps))|=k$. It is easily seen that most $\phi\in\Phi(\cD)$
satisfy this property.

Finally note that a  substantial tuple $(D_1,\dots,D_5)$ in $A=B_1\cup B_2$ are  exactly those having $B_1,B_2$ and $B_3$ as members, two of them twice.
There are $\binom{3}2\binom523=90$ many ways to choose such a tuple.
Similarly, a  substantial tuple $(D_1,\dots,D_5)$ in $A=B_1\cup B_2\cup B_3$ must contain an independent triple, say $\cB'=(B_1',B_2',B_3')$, 
for which there are $\binom72\cdot4$ many choices.
These either extend to a quintuple consisting of pairwise distinct largest sum-free sets, all of which are substantial, 
 or to a quintuple in which exactly one member is repeated, from which all but those containing $\cB'(1,1,1)$ are substantial. 
 No other tuple is substantial. Together this yields $\binom72\cdot4\left(4\cdot3\cdot 5!+3\cdot4\cdot \frac{5!}2\right)=181440$
ways to choose a substantial tuple in $A=B_1\cup B_2\cup B_3$. This finishes the proof.
\end{proof}

\section{Concluding remarks}
\label{sec:concludingremarks}
In the case that $G$ is an even order abelian group the proof of  Theorem~\ref{thm:main45} consists of two parts, a stability and the exact part.
Both parts can be extended to more colors provided $G$ contains   sufficiently many
 largest sum-free sets. 
Indeed, for $r=4,5$ the stability results, Theorem~\ref{thm:stability4} and~\ref{thm:stability5}, follow from  Lemma~\ref{lem:alllargest} and~\ref{lem:OptStructure}.
The short argument in Lemma~\ref{lem:alllargest} extends  to $r=6$ without
change using the lower bound $\kappa_{6,G}\geq 2^{3n/4}3^{n/2}$ (see the footnote in page~\pageref{eq:allgood}). 
For $r=7$ we have the lower bound $\kappa_{7,G}\geq\kappa_7(B_1\cup B_2\cup B_3)\geq 4^{7n/8}$ obtained by an independent triple $(B_1,B_2,B_3)$
of largest sum-free sets in $G$. The same argument as in Lemma~\ref{lem:alllargest}  then implies that for $|\Phi(F_1,\dots, F_7)|\geq 3.99^{7n/8}$ to hold 
 at least six of the seven sets from $(F_1,\dots, F_7)$ must be good and it can then be easily argued  that the last $F_i$ must be good as well.
The second ingredient for the stability result is Lemma~\ref{lem:OptStructure} which identifies
the tuples $\cB=(B_1,\dots, B_r)$ maximizing $|\Phi(\cB')|$ over all tuple $\cB'$ of largest sum-free sets. 
This lemma can be extended to $r=6, 7$ along the same line. Alternatively, as Corollary~\ref{remark:reduction} reduces 
the problem in Lemma~\ref{lem:OptStructure} for an arbitrary $G$ to a related one in $\mathbb F_2^t$ for $t\leq r$, 
the problem might also be  solved by computer search, if needed.

It can be verified that for $r=6$ the optimal structure is obtained for $t=3$ and $\cB$ consisting of four 3-atoms and three 4-atoms,
and for $r=7$ it is obtained for $t=3$ and $\cB$ consisting of  seven  4-atoms.
The optimal  structure  for an arbitrary $r$ is unknown.

Given the stability result  the exact configuration can then be derived using  the  argument given in Section~\ref{sec:exact}
which works for all fixed $r$. Together, we obtain the following for $r=6,7$.

\begin{theorem}
Let $r\in\{6,7\}$ and let $G$ be an abelian group  of sufficiently large even order. 
Then $\kappa_r(A)=\kappa_{r,G}$ if and only if $A=B_1\cup B_2\cup B_3$ for an independent triple $(B_1,B_2,B_3)$ of largest sum-free sets, provided such exists in $G$.
\end{theorem}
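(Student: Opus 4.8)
The plan is to mirror the structure of the proof of Theorem~\ref{thm:main45}, following the two-part scheme (stability, then exact determination) outlined in the concluding remarks. The backbone is already available: for the stability step I would invoke the extensions of Lemma~\ref{lem:alllargest} and Lemma~\ref{lem:OptStructure} to $r=6,7$ sketched above, which together yield the analogue of Theorem~\ref{thm:stability5}. Concretely, starting from a set $A$ maximizing $\kappa_r(A)$, one has $\kappa_r(A)\geq \kappa_r(B_1\cup B_2\cup B_3)$, which by a direct coloring count (colouring each $3$-atom with $3$ colours and each $4$-atom with $4$ colours) is at least $4^{7n/8}$ for $r=7$ and at least $2^{3n/4}3^{n/2}$ for $r=6$. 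Passing to a container family $\cF=\cF(A)$ and picking a tuple $(F_1,\dots,F_r)\in\cF^r$ maximizing $|\Phi(F_1,\dots,F_r)|$, the bound $|\cF|^r\leq 2^{rn(\log n)^{-1/18}}$ forces $|\Phi(F_1,\dots,F_r)|$ to be within a subexponential factor of $\kappa_{r,G}$. The extension of Lemma~\ref{lem:alllargest} then gives largest sum-free sets $B_1,\dots,B_r$ with $|F_i\setminus B_i|\leq 3n(\log n)^{-1/27}$ for all $i$, and the extension of Lemma~\ref{lem:OptStructure} (or a computer search via the $\mathbb F_2^t$ reduction of Corollary~\ref{remark:reduction}) identifies the optimal atom structure: $t=3$, with $\cB$ consisting of four $3$-atoms and three $4$-atoms for $r=6$, and seven $4$-atoms for $r=7$. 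In both cases the underlying three largest sum-free sets come from an independent triple $(B_1,B_2,B_3)$, and $A$ essentially coincides with $B_1\cup B_2\cup B_3$ up to $O\big(n(\log n)^{-1/27}\big)$ elements.

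For the exact step I would reproduce, essentially verbatim, the two-claim argument from the proof of Theorem~\ref{thm:main45}. First, an analogue of Claim~\ref{claim:AcontU}: if there were an $x\in A\setminus(B_1\cup B_2\cup B_3)$, then applying Lemma~\ref{lem:matching} to the independent triple $(B_1,B_2,B_3)$ shows that for every element $a$ of a suitable atom contained in $B_{f(k)}$ (where $\phi(x)=k$) there is $b$ in the same atom with $x=a+b$; this cuts down the number of sum-free extensions of any colouring of $\{a,b\}$ enough that $|\Phi(F_1,\dots,F_r)|$ drops by an exponential factor $(\sqrt{8}/3)^{\Theta(n)}$, contradicting the lower bound. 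Hence $A\subseteq B_1\cup B_2\cup B_3$. Second, an analogue of Claim~\ref{claim:goodbadcol}: defining a colouring of $A$ to be \emph{good} if its colour classes realize the optimal atom structure, one shows (i) the number of good colourings is at least $\kappa_{r,G}\cdot 2^{-O(\gamma n)}$ by colouring inside $F_i\cap B_{f(i)}$, and (ii) the number of bad colourings is exponentially smaller, because a bad colouring exhibits a bad pair $(x,k)$, and Lemma~\ref{lem:matching} applied to the maximizing bad pair forces the same $(\sqrt{8}/3)^{\Theta(n)}$ loss. Since any good colouring of $A$ extends in $\geq 2$ ways to a sum-free $r$-colouring of $A\cup\{x\}$ for $x\in(B_1\cup B_2\cup B_3)\setminus A$, a missing point would contradict maximality, so $B_1\cup B_2\cup B_3\subseteq A$. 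Combining, $A=B_1\cup B_2\cup B_3$ for an independent triple, and conversely any such set attains $\kappa_{r,G}$ by the counting that established the lower bound.

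The two places requiring genuine (though routine) new work are the extension of Lemma~\ref{lem:OptStructure} to $r=6,7$ — solving the linear systems $\sum_i c_i = 2^t-1$, $\sum_i i\,c_i = r\cdot 2^{t-1}$ for each $t\leq r$, ruling out the infeasible atom distributions (as was done for the $(1,6,0,0)$ solution in the $r=5$ case, where the obstruction is that largest sum-free sets of $\mathbb F_2^3$ missing $(1,1,1)$ force a $4$-atom) and checking that the claimed optimum is both feasible and strictly larger than all other feasible configurations — and verifying that the independent triple indeed exists and carries seven largest sum-free sets, which is immediate from Corollary~\ref{remark:reduction}. The main obstacle I anticipate is precisely this optimization/feasibility bookkeeping in Lemma~\ref{lem:OptStructure}: for larger $t$ the solution sets grow and one must argue that every non-optimal feasible vector loses an exponential factor, and separately that every combinatorially-infeasible near-optimal vector (those with too few low atoms to accommodate the required largest sum-free sets of $\mathbb F_2^t$) cannot be realized. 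Everything else — the container bookkeeping, the Lemma~\ref{lem:matching} extension arguments, and the good/bad dichotomy — transfers with only the numerical constants changing, since the $(\sqrt{8}/3)^{\Theta(n)}$ gap persists comfortably against the lower bounds $2^{3n/4}3^{n/2}$ and $4^{7n/8}$.
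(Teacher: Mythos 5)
Your plan matches the paper's own sketch in the concluding remarks essentially point for point: extend Lemma~\ref{lem:alllargest} using the lower bounds $2^{3n/4}3^{n/2}$ (for $r=6$) and $4^{7n/8}$ (for $r=7$), extend Lemma~\ref{lem:OptStructure} via the $\mathbb F_2^t$ reduction to identify the optimal atom structures (four $3$-atoms and three $4$-atoms for $r=6$, seven $4$-atoms for $r=7$), and then rerun the two-claim exact argument from Section~\ref{sec:exact}. This is the paper's intended proof, correctly reproduced including the feasibility caveat from the $(1,6,0,0)$ case.
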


It would be interesting to extend the results concerning even order groups  to arbitrary $r$ and from even order groups to arbitrary  type I($q$) groups (for $r\geq 4$).
The methods presented here might be pushed further to give an answer to certain cases of $r>7$ and even order groups. They
might also be adapted give an answer to particular cases of type I($q$) groups and $r\geq 4$. However, new ideas are needed to solve these problems in general.
\bibliographystyle{plain}
\bibliography{biblio_sumfree}

\end{document}